%% file: main.tex
\documentclass[11pt,a4paper, reqno, dvipsnames, oneside]{amsart}
\usepackage{amssymb}
\usepackage{geometry}
\usepackage{xcolor}
\usepackage{graphicx}
\usepackage{comment}
\usepackage{enumitem}
\setlist{listparindent=0pt}

\usepackage{tikz}
\usetikzlibrary{cd}
\usetikzlibrary{positioning,calc,arrows.meta}
\usepackage{forest}

\forestset{
  bdot/.style={fill=black},
  gdot/.style={fill=gray, color=gray},
  wdot/.style={fill=white},
}

\DeclareRobustCommand{\circledvertex}{%
  \tikz[baseline=-0.6ex, semithick]{\node[circled] {};}}
\DeclareRobustCommand{\hollowvertex}{%
  \tikz[baseline=-0.6ex, semithick]{\node[hollow] {};}}

\tikzset{
  diagram/.style   = {x=0.4cm, y=0.75cm, semithick}, 
  vertex/.style    = {circle, fill, inner sep=0pt, minimum size=4.5pt},
  hollow/.style    = {circle, draw, fill=white, inner sep=0pt, minimum size=6pt},
  circled/.style   = {circle, draw, inner sep=0pt, minimum size=8.5pt,
                      path picture={\fill (path picture bounding box.center) circle (2.25pt);}},
  bold edge/.style = {line width=1.6pt},
}

\tikzset{
  inline label/.style={font=\footnotesize, inner sep=1.5pt}, 
  inlinediag/.style={
    diagram,
    reset cm,
    x=1.25em, y=1.25em,  
    baseline=-0.5ex,
    every label/.append style={inline label},
  },
}

\NewDocumentCommand{\TypeIinline}{}{%
  \tikz[inlinediag]{
    \node[vertex, label={above:$1/2$}]       (m) at (0,0.5)   {};
    \node[hollow, label={above left:$1/3$}]  (h) at (-1,-0.5) {};
    \node[vertex, label={above right:$2/3$}] (r) at (1,-0.5)  {};
    \draw (m) -- (h);
    \draw[bold edge] (m) -- (r);
  }%
}

\newenvironment{inlinematrix}
  {\bigl(\begin{smallmatrix}}
  {\end{smallmatrix}\bigr)}

\usepackage{mymacros}

\usepackage{hyperref}
\usepackage[capitalize]{cleveref}
\hypersetup{
    colorlinks,
    linkcolor=Violet,
    citecolor=Violet,
    urlcolor=black,
}

\theoremstyle{plain}
\newtheorem{theorem}{Theorem}
\newtheorem{lemma}{Lemma}
\newtheorem{corollary}{Corollary}
\newtheorem{proposition}{Proposition}

\theoremstyle{definition}
\newtheorem{definition}{Definition}

\newtheorem{example}{Example}
\newtheorem{remark}{Remark}

\newtheorem{problem}{Problem}

\theoremstyle{remark}

\newlist{thmparts}{enumerate}{1}
\setlist[thmparts]{label=(\roman*), font=\upshape, leftmargin=*, widest=ii}

\title{On the $d$-invariants of Brieskorn spheres and \texorpdfstring{$\GL(2,\Z)$}{GL(2,Z)}}
\author{Shimal Harichurn, Isabella Khan, Andr\'as N\'emethi, Josef Svoboda}
\date{\today}

\address{Durban, KwaZulu-Natal, South Africa}
\email{sharichurn.research@gmail.com}

\address{School of Mathematics, Georgia Institute of Technology, 686 Cherry Street, Atlanta, GA 30332-0160, USA}
\email{ikhan312@gatech.edu}

\address{Alfr\'ed R\'enyi Institute of Mathematics, 1053 Budapest, Re\'altanoda u. 13-15., Hungary}
\email{nemethi.andras@renyi.hu}

\address{Department of Mathematics, California Institute of Technology, 1200~E.~California Blvd., Pasadena, CA 91125, USA}
\email{josefsvobod@gmail.com}

\begin{document}

\begin{abstract}
    We study the $d$-invariant from Heegaard Floer homology for Brieskorn spheres $\Sigma(p,q,r)$. Using surgeries on two singular fibers, we establish new relations of the $d$-invariants among various Brieskorn spheres. For $p=2$, we show that $d$ never vanishes if $q/r$ lies in certain intervals on the rational line. Altogether, we completely classify Brieskorn spheres $\Sigma(2,q,r)$ with $d=0$. The zero set is expressed using submonoids of ${\rm GL}(2,\Z)$ and the Stern--Brocot tree. As an application, we prove that most Brieskorn spheres $\Sigma(2,q,r)$ do not bound a rational homology $4$-ball.
\end{abstract}

\maketitle

\tableofcontents

\section{Introduction}\label{sec:intro}

Heegaard Floer homology, defined by Ozsv\'ath and Szab\'o in~\cite{OzsvathSzabo2004}, is a powerful invariant of 3-dimensional manifolds. For an integer homology 3-sphere $Y$, Heegaard Floer homology admits an absolute grading, whose minimal value is an even integer $d(Y)$ \cite{OzsvathSzaboGraded}. This $d$-invariant is a rational homology cobordism invariant and as such it plays an important role in the study of smooth 4-manifolds.

A lot of attention was given to the computation of the $d$-invariants for plumbed 3-manifolds \cite{os_floer_plumbed,Nemethi2005,khan2026, nemethi2007}, and in particular Brieskorn spheres $\Sigma(p,q,r)$ \cite{durusoy2004, rustamov2003, Borodzik2012, Can2014, Karakurt_Savk_2020, Karakurt_2022, Tweedy_2013}. These 3-manifolds, labelled by a triple $p,q,r$ of relatively prime integers, arise as links of singularities
\begin{equation}\label{eq:sing}
    x^p + y^q + z^r = 0.
\end{equation}
Despite the number of works on this topic, the $d$-invariant has been explicitly determined only for certain special families of Brieskorn spheres.

\begin{figure}[tbp]
    \centering
    \includegraphics[width=\linewidth]{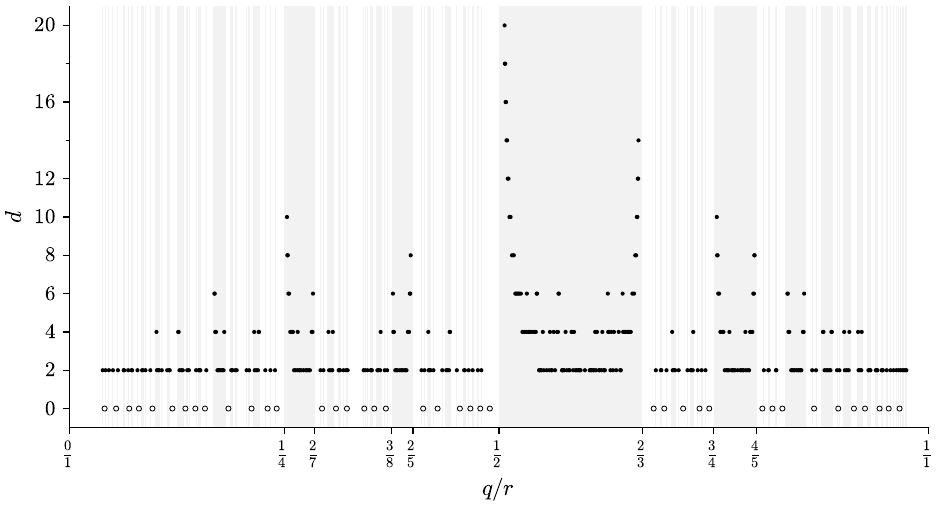}
    \caption{Plot of $d(\Sigma(2, q, r))$ against $q/r \in [0, 1]$ for coprime $2 < q < r < 80$. For visual clarity, only a subset of the $d=0$ set is shown. Shaded regions correspond to \textit{maximal even intervals} studied in the paper.}
    \label{fig:d2qr}
\end{figure}
 
In this paper, we make a significant step towards determining $d$ for all Brieskorn spheres. The key idea is to use surgeries on two singular fibers. Namely, we fix $p$ and view the union of singular fibers $K_q$ and $K_r$ of the Seifert fibration as a two-component link $L$ inside $\Sigma(p,q,r)$. We then perform a surgery on $L$, while ensuring that the result is another Brieskorn sphere $\Sigma(p,q',r')$ with the same (negative) orientation. Although we can do the surgery in two steps (first on $K_q$ and then on $K_r$), the intermediate step does not have to be negatively oriented.

It turns out that many of these surgeries preserve the $d$-invariant. Among those are surgeries on one singular fiber only, studied by Lidman and Tweedy \cite{Lidman_2018}. An example of a new one is\footnote{One can do $(p-1,p+1)$-surgery for $p>2$, but we specialize to $p=2$ for simplicity.} the $(1,3)$-surgery for $p=2$, which transforms $\Sigma(2,q,r)$ to $\Sigma(2,-q+2r,-2q+3r)$. This allows us to transfer the study of the $d$-invariants to the study of submonoids of $\GL(2,\Z)$ and their actions.

We formulate the surgeries in the language of plumbing lattices. This is convenient, as the $d$-invariant is determined by the lattice. However, the surgeries can be used to obtain relations among other invariants, such as Heegaard Floer homology itself, its involutive version \cite{Dai_Manolescu_2019} etc. In that case, surgeries should be formulated in terms of corresponding cobordisms among Brieskorn spheres.

We use the invariance of $d$ under surgeries, together with a non-vanishing result (\cref{thm:nonvanishing_intro} below), to provide a complete description of the $d=0$ set among the spheres $\Sigma(2, q, r)$. Viewing the $d$-invariant as a function of the quotient $q/r \in (0,1)$, our result is best visualized on the Stern--Brocot tree in \cref{fig:tree_intro}. The thick edges correspond to intervals on which $d>0$. The empty dots then correspond to the Brieskorn spheres with vanishing $d$. For example, $3/5$ lies in the interval $(1/2,2/3)$ and hence $d(\Sigma(2,3,5)) = 2 > 0$, while for $\Sigma(2,5,7)$ we have $d=0$. Theorems \ref{th:int1}, \ref{th:int2} and \ref{th:int3} below make this structure rigorous and provide different ways of understanding it.

  \begin{figure}
\centering
\colorlet{gray}{gray!50}
\begin{forest}
  for tree={
    circle,
    draw,
    thick,
    minimum size=6pt,   
    inner sep=0pt,      
    l sep=4mm,          
  },
  where level<=2{s sep=9mm}{},
  where level=3{s sep=3mm, l sep=1mm}{},     
  where level=4{s sep=1.5mm, l sep=1mm}{},   
  where level>=4{minimum size=3.5pt, thin, l=4mm}{}, 
  [, bdot, label=above:{$1/2$}, name=n12,
    [, wdot, label=above:{$1/3$},
      [, bdot, label={[xshift=-5pt]above:{$1/4$}}, name=n14,
        [, wdot, label=above:{$1/5$}
          [,bdot,[],[,bdot,edge={line width=1.2pt}]] [,bdot,[,bdot,edge={line width=1.2pt}][]]
        ]
        [, bdot, label=above:{$2/7$},
          edge={line width=1.2pt}
          [, edge=gray, for descendants={edge=gray},gdot,[,gdot][,gdot]] [,bdot,[],
          [,bdot,edge={line width=1.2pt}]]
        ]
      ]
      [, bdot, label=above:{$2/5$}, name=n25,
        [, bdot, label=above:{$3/8$},
          edge={line width=1.2pt},
          [,bdot,[,bdot,edge={line width=1.2pt}][]] [, edge=gray, for descendants={edge=gray},gdot,[,gdot][,gdot]]
        ]
        [, wdot, label=above:{$3/7$}
          [,bdot,[],[,bdot,edge={line width=1.2pt}]] [,bdot,[,bdot,edge={line width=1.2pt}][]]
        ]
      ]
    ]
    [, bdot, label=above:{$2/3$},
      edge={line width=1.2pt}, 
      [, gdot, edge=gray, for descendants={edge=gray}, label={[gray]above:{$3/5$}}
        [, gdot, label={[gray]above:{$4/7$}}
          [,gdot,[,gdot][,gdot]] [,gdot,[,gdot][,gdot]]
        ]
        [, gdot, label={[gray]above:{$5/8$}}
          [,gdot,[,gdot][,gdot]] [,gdot,[,gdot][,gdot]]
        ]
      ]
      [, bdot, label={[xshift=5pt]above:{$3/4$}}, name=n34,
        [, wdot, label=above:{$5/7$}
          [,bdot,[],[,bdot,edge={line width=1.2pt}]] [,bdot,[,bdot,edge={line width=1.2pt}][]]
        ]
        [, bdot, label=above:{$4/5$},
          edge={line width=1.2pt}
          [, edge=gray, for descendants={edge=gray},gdot,[,gdot][,gdot]] [,bdot,[][,bdot,edge={line width=1.2pt}]]
        ]
      ]
    ]
  ]
\end{forest}
\caption{Visualisation of the $d$-invariants of $\Sigma(2,q,r)$ on the Stern--Brocot tree. The thick edges correspond to intervals where $d>0$. Empty dots denote Brieskorn spheres with $d=0$.}
\label{fig:tree_intro}
\end{figure}

We decided to treat the $p=2$ case first as it illustrates the technique and key ideas.
For a fixed $p \geq 2$, the surgeries on $L$ correspond to elements of (the $\GL(2,\Z)$ lift of) the congruence subgroup $\Gamma(p)$. The same method can be used in this general case, but the monoids and the orbits of their actions are more arithmetically involved. It will be the subject of a subsequent paper.

As mentioned above, the importance of the $d$-invariant stems from the fact that it is a rational homology cobordism invariant, in particular, it can be used for the following question from Kirby's original list \cite[Problem 4.2]{Kirby1997_K2}:
\begin{problem}[{Kirby, 1978}]\label{problem-kirby}
    Which homology 3-spheres bound contractible $4$-manifolds or homology $4$-balls?
\end{problem}

This question attracted a lot of interest, particularly in the case of Brieskorn spheres. Various sub-families of $\Sigma(p, q, r)$ form a large portion of the examples in the current literature, starting with the celebrated example of Mazur \cite{Mazur_1960}, and followed by work of many others (see e.g. \cite{Poenaru_1960, Akbulut_Kirby_Mazur, Stern1978, CassonHarer1981, AkbulutLarson2018, Savk_2020_Brieskorn}). More generally, one can ask, for a given homology $3$-sphere, what is its class in the homology cobordism group $\Theta^3$. Bounding a homology $4$-ball is equivalent to representing the identity element of this group. For a review of the homology cobordism group, see \cite{avk2023, manolescu2018homologycobordismtriangulations}.

Our work shows that most Brieskorn spheres $\Sigma(2,q,r)$ have $d>0$ and hence they cannot bound a (rational) homology ball. As a natural next step, we plan to extend our results about the $d$-invariant to the involutive Heegaard Floer homology, and associated monotone graded roots \cite{Hendricks2017, Dai_Manolescu_2019}. These invariants can be used to show independence of families of Brieskorn spheres in the homology cobordism group. As a first step in this direction, we show that for arbitrary $p$, the matrix $\begin{inlinematrix}
    -2p &2p-1\\
    -2p-1 & 2p
\end{inlinematrix}$ preserves not only the $d$-invariant but also the Neumann--Siebenmann $\bar{\mu}$-invariant (Propositions \ref{prop:EP} and \ref{prop:mubar}).

Finally, we address the question of which Brieskorn spheres $\Sigma(2,q,r)$ have even (minimal) plumbing lattices (equivalently, the $d$ invariant attains its upper bound given by the quarter of the number of vertices in the plumbing graph). Yamada and Matsumoto classified all such $\Sigma(2,q,r)$ in \cite{Yamada_Matsumoto_1983}.  We revisit their theorem in our language of monoids in $\GL(2,\Z)$. This allows us to give explicit constraints on the value of $q/r$ (see \eqref{eq:intervals}). We expect that this method will allow us to extend their theorem to higher $p$ in future.

\subsection{Main Results}\label{subsec:main_results} For Brieskorn 3-spheres $\Sigma(p,q,r)$, $d$ is an even non-negative integer. 
The main result of the present paper is a complete classification of Brieskorn spheres $\Sigma(2,q,r)$ with $d=0$.

In our theorems below we give criteria for vanishing $d=0$ and non-vanishing $d>0$  
simultaneously (though in some cases it will not be immediately transparent from the statement that they are complementary). We provide three `pairs' of statements. 

Below we assume that $q,r$ are coprime odd integers such that $0<q<r$. 
If some positive integers  $a,b,c,d$ satisfy
\begin{equation}\label{eq:int1} 
0<\frac{a}{b}<\frac{c}{d}<1, \quad b c - a d=1, \quad b, c \text{ are even,}
\end{equation}
then the interval $[\frac{a}{b}, \frac{c}{d}]$
will be called an \emph{even interval}. A \emph{maximal even interval} is an even interval that is not strictly contained in another even interval.
Moreover, define 
\begin{equation}\label{eq:M}
\mathcal{M} \coloneqq \left\{ \frac{q}{r}\ \ \Big\vert \ \ \frac{q}{r} \ \mbox{is an element of the interior of an even interval}\right\}. \end{equation}
Associated with a maximal even interval $[\frac{a}{b}, \frac{c}{d}]$,
we define its \emph{V-fraction} $v$ (V for vanishing) as   
\begin{equation}\label{eq:int2}
v=\begin{cases} \frac{3a-c}{3b-d}  & \text{if \ $b<d$},\\[1ex]
 \frac{3c-a}{3d-b}  & \text{if \ $b>d$}. \end{cases}
\end{equation}
We define
\begin{equation*}
\mathcal{V} \coloneqq \text{ the set of all V-fractions.}
\end{equation*}
\begin{theorem}[Arithmetical characterization of $d$] \label{th:int1}\leavevmode
\begin{enumerate}[label=(\roman*), font=\upshape,]
    \item $d(\Sigma(2,q,r))>0$ if and only if $q/r \in \mathcal{M}$.
    \item $d(\Sigma(2,q,r))=0$ if and only if $q/r \in \mathcal{V}$. 
\end{enumerate}
\end{theorem}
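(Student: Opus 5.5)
The plan is to prove (i) and (ii) together. We show three things: $\mathcal{M}$ and $\mathcal{V}$ are disjoint; every admissible $q/r$ (coprime odd $0<q<r$) lies in $\mathcal{M}\cup\mathcal{V}$; and $d>0$ on $\mathcal{M}$ while $d=0$ on $\mathcal{V}$. Because $d$ is even and non-negative, (i) and (ii) then follow at once.

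\emph{Step 1: $d>0$ on $\mathcal{M}$.} This is where the non-vanishing result (\cref{thm:nonvanishing_intro}) enters. For $q/r$ in the interior of an even interval $[a/b,c/d]$, we work with the plumbing lattice of $\Sigma(2,q,r)$. The two legs are given by the continued fraction expansions of $r/q$-type data, and the relations $bc-ad=1$ with $b,c$ even should say that the Stern--Brocot path to $q/r$ passes through the ancestor pair $a/b$, $c/d$. This forces a common prefix in the two legs, and from that prefix we build a characteristic vector $k$ with $k^2+s>0$. The result is $d\geq 2$.

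\emph{Step 2: $d=0$ on $\mathcal{V}$.} We use the $d$-preserving surgeries on the two singular fibres. These act on $(q,r)$ through a submonoid of $\GL(2,\Z)$, for instance $(q,r)\mapsto(-q+2r,-2q+3r)$, together with the one-fibre Lidman--Tweedy moves. We first check a seed, such as $\Sigma(2,1,3)=S^3$ or $\Sigma(2,5,7)$, which has $d=0$. We then show that every V-fraction is obtained from a seed by applying monoid elements. To do this, we note that the matrix $\begin{inlinematrix} a & c\\ b & d\end{inlinematrix}$ of determinant $-1$ represents the interval. The V-fraction $(3a-c)/(3b-d)$ is the image of a fixed vector under this matrix. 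Hence the monoid sending maximal even intervals to one another carries V-fractions to V-fractions, and because $d$ is invariant under these moves, $d=0$ on all of $\mathcal{V}$.

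\emph{Step 3: Complementarity.} This is the Stern--Brocot combinatorics. First, a V-fraction $v$ of $[a/b,c/d]$ lies outside this interval: it is the mediant-type fraction obtained by stepping from $a/b$ away from $c/d$ when $b<d$. We then have to show that $v$ lies in no other even interval. Second, every admissible $q/r$ not in $\mathcal{M}$ must be a V-fraction. We would argue this by descending the tree from $q/r$. Because $q$ and $r$ are odd, the parity pattern (odd/odd, odd/even, even/odd) of Stern--Brocot ancestors is constrained. The first ancestor pair with $b,c$ even gives an even interval. If $q/r$ is not inside it, then parity forces $q/r$ to be the specific neighbour $v$. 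Maximality is obtained by enlarging the interval until it stops.

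I expect this last claim, that the gaps of $\mathcal{M}$ consist exactly of the V-fractions, to be the main obstacle. It requires a careful case analysis of parities along the Stern--Brocot path. Put together, the three steps show that $d>0$ exactly on $\mathcal{M}$ and $d=0$ exactly on $\mathcal{V}$.
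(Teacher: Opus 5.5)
Your three-part architecture is the paper's: $d=0$ on $\mathcal V$ by transporting $d(\Sigma(2,1,3))=d(S^3)=0$ along the $d$-preserving moves $h_1,h_2,h_4$, $d>0$ on $\mathcal M$, and the Stern--Brocot partition of the odd/odd fractions. The genuine gap is Step~1, which is the analytic heart of the theorem.

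You never specify the characteristic vector $k$. Nothing in the proposal links the ancestors $\frac ab,\frac cd$ to a short characteristic vector of the lattice built from the Hirzebruch--Jung expansions of $2r/q$ and $2q/r$, and the ``common prefix'' is not a mechanism. Moreover, \cref{thm:nonvanishing_intro}, which you cite, only covers $(\frac12,\frac23)$. Even there no $k$ is exhibited. The paper instead proves non-diagonalizability: the Lecuona--Lisca lattice $L_b$ embeds in $L(2,q,r)$ for $\frac12<\frac qr\le\frac35$, Elkies' theorem turns this into $d>0$, and the reflection $S_2$ covers $[\frac35,\frac23)$.

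The missing idea is to use your Step~2 machinery for positivity as well. Every maximal even interval is $\tilde g\cdot[\frac12,\frac23]$ for some $\tilde g\in\widetilde{\mathfrak m}^*$; this follows from Lemmas \ref{lem:SB2} and \ref{lem:SB6} and the identity $O\tilde h g_3u^\epsilon=\tilde g\begin{inlinematrix}1&2\\2&3\end{inlinematrix}u^\epsilon$ in the proof of Proposition \ref{prop:Maction}. Every even interval lies in a maximal one. Then $\tilde g^{-1}$ sends an odd/odd $\frac qr$ in the interior to a coprime odd pair with $\frac{q_0}{r_0}\in(\frac12,\frac23)$. Positivity holds because $\binom qr$ lies in the open cone spanned by $\tilde g\binom12$ and $\tilde g\binom23$. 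Parity is preserved because $\tilde g$ is congruent to the identity or to $u$ modulo $2$. Stable equivalence under $h_1,h_2,h_4$ then gives $d(\Sigma(2,q,r))=d(\Sigma(2,q_0,r_0))>0$.

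Two smaller points. In Step~2 you need surjectivity onto $\mathcal V$, not just that the monoid carries V-fractions to V-fractions. This requires matching orientations with the $b<d$ / $b>d$ rule of \eqref{eq:int2}, which is the $u^\epsilon$ bookkeeping of Lemma \ref{lem:SB6} and Corollary \ref{cor:V}. Also, only $\frac13$ works as a single seed: the orbit of $\frac57=h_4\cdot\frac13$ consists only of the words ending in $h_4$.

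In Step~3, ``the first ancestor pair with $b,c$ even'' is the wrong object. For instance, $\frac15$ has no such pair; its partner $[\frac14,\frac27]$ is a sibling inside the white interval $[\frac01,\frac13]$. The right mechanism is Lemmas \ref{lem:SB1} and \ref{lem:SB3}: advance two levels at a time, and among the three new points in a white interval the only odd/odd one is the V-fraction. Disjointness of $\mathcal M$ and $\mathcal V$ is automatic once Steps~1 and~2 hold. So Step~3 is routine, and the real obstacle is Step~1, not Step~3.
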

Our main tool, surgery on the two singular fibers, is conveniently described by $2\times2$ matrices. In the $p=2$ case they belong to the congruence subgroup $\Gamma(2)$ of the modular group $\SL(2,\Z)$, or occasionally its lift in $\GL(2,\Z)$.
In the next two theorems, we use a right, respectively  left action of  $\GL(2,\Z)$. 

In the first case, 
we define an  action of  $\GL(2,\Z)$ on the set of intervals as follows.
We identify an interval $I=[\frac{e}{f},\frac{e'}{f'}]$ with the $2\times 2$ matrix 
$M_I \coloneqq \begin{inlinematrix} e & e' \\ f & f' \end{inlinematrix}$, and we define 
$I\cdot h$ $(h\in {\rm GL}(2,\Z))$ as the interval corresponding to the matrix $M_I\cdot h$. 
We also  fix the following matrices:
\[
O \coloneqq \begin{pmatrix} 0 & 1 \\ 1 & 1 \end{pmatrix}, \ \ 
g_1 \coloneqq \begin{pmatrix} 1 & 2 \\ 0 & 1  \end{pmatrix}, \ 
g_2 \coloneqq \begin{pmatrix} 1 & 2 \\ 1 & 1  \end{pmatrix}, \
g_3 \coloneqq \begin{pmatrix} 1 & 1 \\ 1 & 2  \end{pmatrix}, \ 
g_4 \coloneqq \begin{pmatrix} 1 & 0 \\ 2 & 1  \end{pmatrix}, \ 
u \coloneqq \begin{pmatrix} 0 & 1 \\ 1 & 0  \end{pmatrix}  .\] 

We denote by $\widetilde{\mathfrak{m}}$ the submonoid of $\GL(2,\Z)$ generated by $g_1,g_2,g_4$.
\begin{theorem}[Characterization via the right action of the monoid $\widetilde{\mathfrak{m}}$] \label{th:int2} \leavevmode

\begin{enumerate}[label=(\roman*), font=\upshape,]
\item The monoid $\widetilde{\mathfrak{m}}$ is free. The maximal even intervals are exactly the intervals corresponding to 
 $O \cdot \tilde{h} g_3 u^\epsilon$, 
    where $\tilde{h}\in \widetilde{\mathfrak{m}}$, 
   $\epsilon\in\{0,1\}$, and  $\epsilon=0$ exactly when $\tilde{h}$ contains an even number of $g_2$'s. 

\item[(ii-a)] The map $\widetilde{\mathfrak{m}}\ni \tilde{h}\mapsto O \tilde{h}\binom {2}{1}\in [0,1]\cap \mathbb{Q}$ is injective. (Here $ O \tilde{h}\binom {2}{1}$ means the usual matrix multiplication.)

\item[(ii-b)]  The set $\mathcal{V}$ of V-fractions is parametrized by 
 $O\tilde{h}\binom{2}{1}$, for all  $\tilde{h}\in \widetilde{\mathfrak{m}}$.
 \end{enumerate}
\end{theorem}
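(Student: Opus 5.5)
The plan is to reduce everything to the Stern--Brocot (Farey) description of intervals with unimodular endpoints, and then to a ping-pong argument for the three generators. Put $L=\begin{inlinematrix}1&0\\1&1\end{inlinematrix}$ and $R=\begin{inlinematrix}1&1\\0&1\end{inlinematrix}$. The intervals $[\frac ef,\frac{e'}{f'}]\subset[0,1]$ with $fe'-ef'=1$ are exactly those with $M_I=O\cdot w$ for a unique word $w$ in the free monoid $\langle L,R\rangle$. Right multiplication by $L$ (resp.\ $R$) replaces $I$ by its left (resp.\ right) half at the mediant. Two such Farey intervals are either nested or have disjoint interiors, and nesting is exactly the prefix order on words.

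\textbf{Maximal even intervals (part (i)).} For a Farey interval the condition ``$b,c$ even'' in \eqref{eq:int1} forces $ad$ odd, so it says exactly that $M_I\equiv I_2 \pmod 2$. Thus even intervals correspond to words $w$ with $Ow\equiv I_2\pmod 2$. By the nesting remark, maximal even intervals correspond to such words with no proper prefix of the same kind. The reductions of $L,R$ generate $\SL(2,\mathbb F_2)\cong S_3$, so these are first-hitting words of a finite automaton on $S_3$.

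I would enumerate the excursions explicitly and match them with the given matrices. The first hit is $O\cdot g_3=\begin{inlinematrix}1&2\\2&3\end{inlinematrix}$, i.e.\ $[\frac12,\frac23]$, with $g_3=LR$. The generators $g_1=R^2$, $g_4=L^2$ and $g_2=g_1u$ (note $\det g_2=-1$) record the closed loops in the automaton that do not pass through the target state. The factor $u$ accounts for the column swap, i.e.\ for which endpoint carries the even denominator. Keeping track of that swap is what forces $\epsilon$ to be the parity of the number of $g_2$'s. Here I would track, along the word, the pair (state in $S_3$, whether $b<d$), inductively over left-to-right multiplication by $g_1,g_2,g_4$.

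\textbf{Freeness and injectivity (parts (i) and (ii-a)).} Both follow from a ping-pong argument on the closed positive cone $C=\{(x,y):x,y\ge0\}$. One checks:
\begin{itemize}
\item $g_1C\subseteq\{x\ge 2y\}$;
\item $g_4C\subseteq\{y\ge 2x\}$;
\item $g_2C\subseteq\{y\le x\le 2y\}$.
\end{itemize}
These three subcones have pairwise disjoint interiors.

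To handle the boundary issue, I would find a smaller invariant cone $C'\subset C$ that contains $\binom21$ and all its images. One must make sure that for $\tilde h\neq 1$ the vector $\tilde h\binom21$ lies in the open subcone belonging to the first letter of $\tilde h$, while $\binom21$ itself is distinguished, for instance by being the unique primitive vector on its ray reached by the empty word. Then the first letter is read off from the position of $O^{-1}$ of the fraction, and one peels off letters inductively. The same argument applied to matrices gives freeness of $\widetilde{\mathfrak m}$.

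\textbf{V-fractions (part (ii-b)).} By \eqref{eq:int2}, if $M=\begin{inlinematrix}a&c\\b&d\end{inlinematrix}$ then $v=M\binom{3}{-1}$ when $b<d$ and $v=M\binom{-1}{3}$ when $b>d$. Since $g_3\binom{3}{-1}=\binom21$ and $u\binom{-1}{3}=\binom{3}{-1}$, for $M=O\tilde h g_3u^\epsilon$ we get $v=O\tilde h\binom21$ in both cases, provided we have shown $b<d\iff\epsilon=0$. This is the invariant tracked in part (i), so (ii-b) follows once (i) is proved.

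\textbf{Main obstacle.} I expect the hardest step to be the automaton and excursion analysis in (i). One must show that the first-return words factor \emph{uniquely} as (word in $g_1,g_2,g_4$)$\cdot g_3u^\epsilon$, with the orientation bookkeeping correct. I would first try to brute-force the Cayley graph of $S_3$ with generators $\bar L,\bar R$ and read the loop generators off it directly.
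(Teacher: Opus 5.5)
\textbf{The gap is in part (i).} The central claim, that the maximal even intervals are exactly the $O\tilde h g_3u^\epsilon$, is only announced. The one identity you give for it is false: $g_1u=\bigl(\begin{smallmatrix}2&1\\1&0\end{smallmatrix}\bigr)\neq g_2$. The correct relation is $g_2=G_2u$, where $G_2=\bigl(\begin{smallmatrix}2&1\\1&1\end{smallmatrix}\bigr)$ is the second of the four quarter operators $G_1=g_1$, $G_2$, $G_3=g_3$, $G_4=g_4$ of a two-step subdivision. (Also, with your matrices, right multiplication by $L$ gives the right half, not the left. Your identities $g_1=R^2$, $g_4=L^2$, $g_3=LR$ are still correct as matrix identities.)

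The mod $2$ automaton only tells you which $L,R$-words are first-hitting. Matching $Ow$ with $O\tilde h g_3u^\epsilon$ needs integer identities, and that is the missing step. It can be done as follows.
\begin{itemize}
\item For odd-length $w'$, $\bar O\bar w'$ is an odd permutation, so it is never $1$. You may therefore group $w$ into blocks $G_k$.
\item Mod $2$, $G_1\equiv G_4\equiv 1$, $G_2\equiv O$, $G_3\equiv O^2$ and $O^3\equiv 1$. Starting from $\bar O$, the first hit of $1$ is either a $G_3$ applied in class $\bar O$ or a $G_2$ applied in class $\bar O^2$.
\item Before that hit, $w$ alternates between stretches in $\{G_1,G_4\}^*$ (class $\bar O$) and excursions $G_2vG_3$ with $v\in\{G_1,G_4\}^*$ (through class $\bar O^2$).
\item Using $G_2=g_2u$ and $ug_1u=g_4$, $ug_2=g_3$, $ug_3=g_2$, one gets $G_2vG_3=g_2v'g_2$ and $G_2vG_2=g_2v'g_3u$, where $v'$ interchanges $g_1$ and $g_4$.
\end{itemize}
This gives a bijection between first-hitting words and $\tilde h\in\widetilde{\mathfrak m}$, with $\epsilon$ the parity of the number of $g_2$'s. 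It also gives the fact you need in (ii-b): if the last block is $G_3$ applied to $J$ then $d-b=d_J>0$, and if it is $G_2$ then $b-d=b_J>0$. So $b<d$ if and only if $\epsilon=0$.

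This is exactly the paper's Type~I/Type~II analysis with its substitution $G_2\mapsto g_2u$. Once it is supplied, your framework works. Your formulation of maximality as ``no proper prefix is $\equiv 1 \pmod 2$'' is a clean substitute for the paper's identification of maximal even intervals with newly created black ones. Your derivation of (ii-b) via $g_3\binom{3}{-1}=g_3u\binom{-1}{3}=\binom{2}{1}$ is the paper's own.

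\textbf{Freeness and (ii-a)} are correct, by a genuinely different route. The paper gets freeness from the fact that distinct words in $G_1,\dots,G_4$ label distinct elementary intervals of the Stern--Brocot sequences, after pushing the $u$'s to the right. It gets (ii-a) because distinct $\tilde h$ give V-fractions born in distinct white intervals, and every rational occurs only once in those sequences.

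Your ping-pong argument is self-contained and needs no auxiliary cone $C'$. Write $t=x/y$. Then $g_1,g_2,g_4$ map the open quadrant into $t>2$, $1<t<2$ and $t<\tfrac{1}{2}$ respectively. The vector $\binom{2}{1}$ lies in the open quadrant, and its ray $t=2$ lies in none of these three sets. So after cancelling a common prefix, two distinct words send $\binom{2}{1}$ to distinct rays; this is (ii-a). Freeness of $\widetilde{\mathfrak m}$ then follows immediately from (ii-a).

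The paper's route lets one piece of Stern--Brocot bookkeeping deliver all parts at once. Yours separates the injectivity statements from the combinatorics and makes them checkable in a few lines.
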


Finally, we consider the  left action of ${\rm GL}(2,\Z)$ on the set of column vectors $\binom{e}{f} \in \Z^2$
defined by the left multiplication. We use 
the following matrices of  ${\rm GL}(2,\Z)$.
   \begin{equation}
        h_1 = \Mat{1}{0}{2}{1},\quad h_2 = \Mat{0}{1}{1}{2}, \quad 
        h_3 = \Mat{1}{1}{1}{2}, \quad
        h_4 = \Mat{-1}{2}{-2}{3}.
    \end{equation}
We denote by $\widetilde{\mathfrak{m}}^*$ the submonoid of $\GL(2,\Z)$ generated by $h_1,h_2,h_4$.
\begin{theorem}[Characterization via the left action of the monoid $\widetilde{\mathfrak{m}}^*$] \label{th:int3} \leavevmode

\begin{enumerate}[label=(\roman*), font=\upshape,]
\item The monoid $\widetilde{\mathfrak{m}}^*$ is free.
The union of $\widetilde{\mathfrak{m}}^*$ orbits of the elements of $\left(\frac{1}{2}, \frac{2}{3}\right)\cap \mathbb{Q}$ equals $\mathcal{M}$. If $q,r$ are odd, then 
$\frac{q}{r}\in\mathcal{M}$ if and only if $d(\Sigma(2,q,r))>0$.

\item The  $\widetilde{\mathfrak{m}}^*$ orbit of $\frac{1}{3}$ is $\mathcal{V}$, and
the correspondence $\widetilde{\mathfrak{m}}^* \ni \tilde{g}\mapsto \tilde{g}\binom{1}{3}\in \mathcal{V}$ is injective. Hence the set of $q/r$ with $d(\Sigma(2,q,r))=0$ is exactly this orbit. 
\end{enumerate}
\end{theorem}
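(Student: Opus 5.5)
The plan is to deduce Theorem~\ref{th:int3} from Theorems~\ref{th:int1} and~\ref{th:int2} by a duality between the right action on interval matrices and the left action on column vectors.

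The $d$-theoretic content (``$d>0$ iff $q/r\in\mathcal M$'' and ``$d=0$ iff $q/r\in\mathcal V$'') is already contained in Theorem~\ref{th:int1}. What remains is purely arithmetical: show that $\widetilde{\mathfrak m}^*$ is free, that $\widetilde{\mathfrak m}^*\cdot\bigl((\tfrac12,\tfrac23)\cap\mathbb Q\bigr)=\mathcal M$, and that $\tilde g\mapsto\tilde g\binom13$ is a bijection $\widetilde{\mathfrak m}^*\to\mathcal V$.

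\textbf{Freeness and the fundamental domain.} For freeness I would use a ping-pong argument on the interval $(0,1)$, acting by Möbius transformations. Each generator maps $(0,1)$ into a subinterval:
\begin{itemize}
\item $h_1$ sends $x\mapsto x/(2x+1)$, mapping $(0,1)$ onto $(0,\tfrac13)$;
\item $h_2$ sends $x\mapsto 1/(x+2)$, mapping $(0,1)$ onto $(\tfrac13,\tfrac12)$;
\item $h_4$ sends $x\mapsto(-x+2)/(-2x+3)$, mapping $(0,1)$ onto $(\tfrac23,1)$.
\end{itemize}
These images are pairwise disjoint and also disjoint from $[\tfrac12,\tfrac23]$. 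Hence every nontrivial word $\tilde g$ maps $(0,1)$ into the image of its first letter. Iterating this shows that distinct words send $(0,1)$ to disjoint subintervals, which are nested along prefixes. Freeness follows, as does injectivity of $\tilde g\mapsto \tilde g\binom13$, since $\tfrac13$ lies in $(0,1)$ and the images are disjoint. The same nesting shows that the sets $\tilde g\bigl((\tfrac12,\tfrac23)\bigr)$ are pairwise disjoint. Each one is the interior of $\tilde g$ applied to the even interval $[\tfrac12,\tfrac23]$ (here $b=2$, $c=2$ are even and $bc-ad=1$).

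\textbf{Identifying the orbits.} I would show that each $h_i$ maps even intervals to even intervals. An interval matrix $M_I$ has determinant $-1$ and fixed parity pattern $\left(\begin{smallmatrix}*&0\\0&*\end{smallmatrix}\right)\bmod 2$. Left multiplication by $h_i$ preserves the determinant up to sign and preserves this parity pattern modulo~$2$, because $h_1,h_4\equiv I$ and $h_2\equiv\left(\begin{smallmatrix}0&1\\1&0\end{smallmatrix}\right)$; possibly the endpoints swap. Therefore the orbit of $(\tfrac12,\tfrac23)$ lies in $\mathcal M$.

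For the converse I would use the transpose relation $h_i\leftrightarrow g_j$ together with the conjugations by $O$ and $u$ that appear in Theorem~\ref{th:int2}. The idea is that $M_I=O\tilde h g_3u^\epsilon$ should be rewritable as $\tilde g\,M_{[1/2,2/3]}$ for some $\tilde g\in\widetilde{\mathfrak m}^*$. A direct check gives $M_{[1/2,2/3]}=\left(\begin{smallmatrix}1&2\\2&3\end{smallmatrix}\right)=Og_3$, so the task is to find a word map $\tilde h\mapsto\tilde g$ with $O\tilde h=\tilde g O$ up to right factors by $u$. For example, $Og_1O^{-1}$ and $Og_4O^{-1}$ should match $h_4$ and $h_1$ (or their $u$-twists), and $g_2$ should match $h_2$ combined with a swap, which accounts for the parity of $g_2$ in $\epsilon$.

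This gives that the maximal even intervals are exactly $\tilde g[\tfrac12,\tfrac23]$. Since every even interval lies in a maximal one, $\mathcal M$ is the union of their interiors. For the V-fractions, I would verify in the base case $\tilde h=1$ that $O\binom21=\binom13$ is the V-fraction of $[\tfrac12,\tfrac23]$: here $b<d$, so $v=(3a-c)/(3b-d)=\tfrac13$. Equivariance then transports Theorem~\ref{th:int2}(ii-b) to the statement that $\mathcal V=\widetilde{\mathfrak m}^*\binom13$.

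\textbf{Main obstacle.} I expect the hardest step to be the exact intertwining $O\widetilde{\mathfrak m}\leftrightarrow\widetilde{\mathfrak m}^*O$, that is, getting the $u$-twists right so that the V-fraction formula, which depends on whether $b<d$, matches the orbit of $\tfrac13$. If the conjugation does not close up cleanly, I would instead argue directly. The alternative route is to show that $\mathcal M$ equals the complement in $(0,1)$ of the orbit $\widetilde{\mathfrak m}^*\cdot\tfrac13$ together with a few boundary points. I would do this by recursive descent: any $x$ outside $[\tfrac12,\tfrac23]$ lies in one of the three image intervals, so one applies the inverse generator and inducts on the denominator, which strictly decreases. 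Combined with Theorem~\ref{th:int1} and the complementarity of $\mathcal M$ and $\mathcal V$ on odd fractions, this would finish the proof.
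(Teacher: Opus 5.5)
Your arithmetic half is correct and follows essentially the paper's route (Proposition~\ref{prop:Maction} and the opening of Section~\ref{sec:proof_main_thms}). The step you flag as the main obstacle is only a one-line check.
\begin{itemize}
\item The relations $h_iO=Og_i$ hold exactly for $i=1,2,3,4$. So $Og_1O^{-1}=h_1$, $Og_2O^{-1}=h_2$ and $Og_4O^{-1}=h_4$, not $h_4$ and $h_1$ as you guessed, and no $u$-twists are involved.
\item Hence $O\tilde h=\tilde g\,O$, where $\tilde g$ comes from the letter substitution $g_i\mapsto h_i$. This gives $O\tilde h g_3u^\epsilon=\tilde g\,(Og_3)\,u^\epsilon=\tilde g\,M_{[1/2,2/3]}\,u^\epsilon$, where $u^\epsilon$ only reorders the endpoints. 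It also gives $O\tilde h\binom21=\tilde g\binom13$.
\item These identities transport Theorem~\ref{th:int2} verbatim.
\end{itemize}
Your ping-pong argument for freeness is fine. For injectivity of $\tilde g\mapsto\tilde g\binom13$, when one word is a prefix of the other, you additionally need $\frac13\notin h_i((0,1))$ for $i=1,2,4$; this holds.

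The genuine gap is the $d$-theoretic half. You say it is ``already contained in Theorem~\ref{th:int1}'', but the paper gives no independent proof of the $d$-statements of Theorem~\ref{th:int1}. In Section~\ref{sec:proof_main_thms} they are derived precisely from the orbit picture of the present theorem. Citing them here is therefore circular, and all the topological content is missing from your proposal. Three ingredients must be supplied.

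\begin{enumerate}
\item[(a)] $d$ is constant along $\widetilde{\mathfrak m}^*$-orbits, because for each $h_i$ the lattices of $(2,q,r)$ and $(2,q',r')$ are stably equivalent.
\begin{itemize}
\item For $h_1$ and $h_2$ this is the $(-1)$-surgery on a single singular fiber (Example~\ref{ex:2}).
\item For $h_4$ it is the $(+1,+3)$-surgery on $K_q\cup K_r$. There one constructs $L\oplus\langle -1\rangle^{\oplus N}\cong L'$ explicitly, sending the $(-1)$-vertex of the blown-up graph to $-F'^*+D'^*$ (Proposition~\ref{ex:5}).
\item The case $q=1$ must also be covered, via the non-minimal graph, since the orbit of $\frac13$ starts from $\Sigma(2,1,3)$.
\end{itemize}
\item[(b)] $d>0$ on $(\frac12,\frac23)$. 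This needs a real argument.
\begin{itemize}
\item For $\frac12<\frac qr\le\frac35$ one embeds the Lecuona--Lisca lattice $L(\widetilde\Gamma_b)$ into $L(\Sigma(2,q,r))$. Hence that lattice is not diagonal, and Elkies' theorem gives $d>0$.
\item The involution $S_2$ induces a lattice isomorphism (Proposition~\ref{prop:EP}) and carries this conclusion to $[\frac35,\frac23)$.
\end{itemize}
\item[(c)] $d(\Sigma(2,1,3))=d(S^3)=0$.
\end{enumerate}

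From (a)--(c) and your orbit identifications, $d>0$ on $\widetilde{\mathfrak m}^*\cdot\bigl((\frac12,\frac23)\cap\mathbb Q\bigr)=\mathcal M$ and $d=0$ on $\widetilde{\mathfrak m}^*\binom13=\mathcal V$. The combinatorial dichotomy then finishes the argument: every $\frac qr$ with $q,r$ odd lies in exactly one of these two sets (Lemma~\ref{lem:SB3}). This yields both $d$-assertions of the statement, and it is how the paper proves Theorem~\ref{th:int1} in the first place.
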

The combinatorial part of 
Theorems \ref{th:int1} and \ref{th:int2} is based on the properties of the 
Stern--Brocot tree. 
Theorems \ref{th:int2} and \ref{th:int3} are related via the matrix identities
\begin{equation}
h_i O=O g_i, \quad \text{for } i=1,2,3,4.
\end{equation}
Theorem \ref{th:int3} is related to Dehn surgeries as follows. For each $h_i$ we consider $\binom{q'}{r'}=h_i\binom{q}{r}$. Then  
$\Sigma(2,q',r')$ is a surgery manifold of $\Sigma(2,q,r)$. We express it in the language of plumbing graphs, and prove that the corresponding lattices are stably equivalent, hence 
their $d$-invariants coincide. 

For the first two matrices, the stability of the $d$-invariant follows from a theorem of Lidman and Tweedy~\cite{Lidman_2018}, which we reprove here. They corresponds to the surgery of type 
 $\Sigma\mapsto \Sigma_{-1}(K)$ on a Seifert fiber $K$. 
 The third matrix $h_4$ (which was not previously considered in~\cite{Lidman_2018}) corresponds to the $(1,3)$-surgery on the link of the second and third singular Seifert fiber.

In addition to the stability of the $d$-invariant along the $\widetilde{\mathfrak{m}}^*$ orbits, we also need a non-vanishing result:

\begin{theorem}\label{thm:nonvanishing_intro}
    \[d(\Sigma(2,q,r))>0 \quad \text{ for all } q,r \text{ with } \quad \frac12<\frac{q}{r}<\frac23.
    \]
\end{theorem}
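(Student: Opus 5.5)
We prove the statement by exhibiting, for every coprime pair with $\frac12<\frac qr<\frac23$, a characteristic element of the plumbing lattice with square strictly larger than the lower bound that would force $d=0$. We use the plumbing formula for $d$ of a negative-definite almost-rational (here star-shaped) graph $\Gamma$ with $s$ vertices and intersection form $Q$:
\[
d(\Sigma)=\max_{k \text{ characteristic}} \frac{k^2+s}{4}.
\]
Equivalently, via N\'emethi's reduction, $d$ equals $-2\min_i \tau(i)$ computed from the Laufer-type sequence $\tau$ attached to the central vertex. Since $d\ge0$ always and $d$ is even, it suffices to find one characteristic vector $k$ with $k^2+s>0$, i.e.\ $k^2+s\ge 8$. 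Alternatively, it suffices to show the $\tau$-function dips below $0$ somewhere.

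The first step is to write down the minimal plumbing graph of $\Sigma(2,q,r)$ explicitly when $q/r\in(\frac12,\frac23)$. The graph has a central vertex of weight $-b_0$, a single $-2$ leg for the fiber of order $2$, and legs given by the negative continued fractions of $q/q'$ and $r/r'$, where $q',r'$ are determined by $2qr\,b_0-\dots=1$. The condition $\frac12<\frac qr<\frac23$ should force a specific, uniform shape. I expect $b_0=2$ (or the Seifert invariants to be arranged so) and one of the two long legs to start with a $-2$ chain of controlled length, in analogy with $\Sigma(2,3,5)=E_8$, where the lattice is even and $d=2$. Concretely, I would compute $e_0=-b_0+\frac12+\frac{q'}{q}+\frac{r'}{r}$ and translate $q/r\in(\frac12,\frac23)$ into $\frac{q'}{q}+\frac{r'}{r}$ lying in a range pinning down $b_0$.

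The second step is to use the $\tau$-function (N\'emethi's formula for Seifert spaces). We have $\tau(0)=0$ and
\[
\tau(i+1)-\tau(i)=1-b_0 i+\left\lceil \tfrac{i}{2}\right\rceil+\left\lceil \tfrac{iq'}{q}\right\rceil+\left\lceil \tfrac{ir'}{r}\right\rceil ,
\]
with the ceilings suitably normalized; the precise form I would take from the Brieskorn case, $\Delta(i)=1+|e_0|i-\lceil i/2\rceil-\lceil i\omega_q/q\rceil-\dots$. Then $d=-2\min\tau$. The plan is to exhibit a small index $i_0$ at which $\tau(i_0)<0$, most likely $i_0=1$ or $2$, with the interval condition on $q/r$ ensuring $\Delta(0)+\Delta(1)<0$ or a similar small sum. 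In the $E_8$ case, $\tau$ drops to $-1$ immediately. The expectation is that $q/r\in(\frac12,\frac23)$ is exactly the condition making the first few ceiling terms large enough to produce that drop.

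The main obstacle is bookkeeping the Seifert invariants $\omega_q,\omega_r$ (the inverses of $2r$ mod $q$ and of $2q$ mod $r$) as functions of $q,r$. These are not monotone in $q/r$, so a uniform drop at a fixed index may fail. If so, I would instead use the surgery-invariance already available. The Lidman--Tweedy moves $h_1,h_2$ preserve $d$, and their inverses act on $(\frac12,\frac23)$. I would reduce $(q,r)$ by an inverse move staying inside the interval, descending to a minimal pair. I would then prove positivity only for these minimal representatives, for instance $r<2q-1$ type boundary families, by an explicit characteristic vector supported on the $-2$ chain, mimicking the $E_8$ vector. I would try the direct $\tau$ computation at $i_0=1$ first.
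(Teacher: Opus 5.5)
There is a genuine gap: neither of your two routes works as stated.

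\textbf{The $\tau$-criterion.} You have misremembered it. N\'emethi's formula for almost rational graphs is $d=\frac{Z_K^2+s}{4}-2\min_{i\ge 0}\tau(i)$, not $d=-2\min\tau$. Since $\tau(0)=0$, the $\tau$-function only measures the excess of $d$ over $(Z_K^2+s)/4$. That quantity equals $\mu-12p_g$ by Laufer's formula and can be very negative inside the interval; for instance it is $-48$ for $\Sigma(2,11,19)$. So a dip of $\tau$ below $0$ does not give $d>0$. Conversely, $d>0$ need not come from a dip, and your own model case shows this. Indeed $\frac35$ lies in the interval, and for $\Sigma(2,3,5)$ (central weight $-2$, $\omega$'s $1,2,4$) one has $\Delta_i=1+2i-\lceil i/2\rceil-\lceil 2i/3\rceil-\lceil 4i/5\rceil\ge \frac{i}{30}-\frac{29}{30}>-1$. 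Hence $\Delta_i\ge 0$ for all $i$ and $\tau$ never drops; $d=2$ comes entirely from $Z_K=0$, $Z_K^2+s=8$. So ``find a small $i_0$ with $\tau(i_0)<0$'' already fails on your guiding example, and in general it would need global control of $Z_K^2+s$. Your other reduction (one characteristic $k$ with $k^2+s\ge 8$) is valid, but you never produce such a $k$. The $E_8$ vector is $k=0$, which is characteristic only for even lattices, and by Yamada--Matsumoto those form a thin subset of the interval.

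\textbf{The fallback descent.} It is impossible. Since $h_1\binom{q}{r}=\binom{q}{2q+r}$ and $h_2\binom{q}{r}=\binom{r}{q+2r}$, the images of $h_1,h_2$ lie in $(0,\frac13)$ and $(\frac13,\frac12)$; the image of $h_4$ lies in $(\frac23,1)$. For $r<2q$ the inverses give $\binom{q}{r-2q}$ and $\binom{r-2q}{q}$, each with a negative entry. The interval $(\frac12,\frac23)$ is precisely the seed region the monoid never reaches, which is why it needs an independent argument. The only symmetry inside it is the involution $S_2$, which merely halves it.

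\textbf{The missing idea.} Avoid computing $d$ altogether and instead obstruct diagonalizability. By Elkies, $d(\Sigma(2,q,r))=0$ iff $L(\Sigma(2,q,r))\cong\langle -1\rangle^{\oplus s}$. For $\frac12<\frac qr\le\frac35$ the continued fractions are $2r/q=[[4,b,\dots]]$ and $2q/r=[[2,2,2,2,2,\dots]]$. So the $r$-leg ends in $(-2)$-vertices $F,E,C$, and the $q$-leg ends in a $(-b)$-vertex $A$. The vectors $F,E,C,-C^*,A^*,-A$ span a copy of a Lecuona--Lisca lattice $L_b$; the needed pairings $-(C^*,C^*)=2$, $-(A^*,A^*)=4$, $-(A^*,C^*)=2$ are read off from the continued fraction data. $L_b$ admits no embedding into any $\langle -1\rangle^{\oplus N}$, so $d>0$ there. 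Finally, the involution $S_2$ induces lattice isomorphisms and transfers the result to $[\frac35,\frac23)$.
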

The proof consists of two parts. Firstly, we consider the operator $S_2 \coloneqq  \begin{inlinematrix} -4& 3\\ -5& 4\end{inlinematrix} $. It preserves the interval $[\frac12,\frac23]$, reverses its orientation, and identifies  the interval $[\frac12,\frac35]$ with 
 $[\frac35,\frac23]$. Then, for 
 $\binom {q'}{r'}=S_2\binom{q}{r}$ and  $\frac12<\frac{q}{r}<\frac35$ we show that 
 the lattices of $(2,q,r)$ and $(2,q',r')$ are equivalent (in fact, this statement extends to an
 arbitrary $p\geq 2$ as well). In the second part, we prove that 
 $d(\Sigma(2,q,r))>0 $ for any
 $\frac12<\frac{q}{r} \leq \frac35$.
The key step is a certain lattice embedding and a result of Lecuona and Lisca \cite[Lemma 3.3]{LL}.

As a topological application, we obtain:
\begin{corollary}
    If $q/r \in \mathcal{M}$, the Brieskorn sphere $\Sigma(2,q,r)$ does not bound a rational homology ball. In particular, this holds in the interval $(\frac12,\frac23)$. 
\end{corollary}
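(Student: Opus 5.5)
This follows directly from \cref{th:int1}(i) together with the standard obstruction coming from the $d$-invariant. The plan has three short steps.

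\textbf{Step 1: $d>0$.} Let $q/r \in \mathcal{M}$. Since $\Sigma(2,q,r)$ is a Brieskorn sphere with $q,r$ odd and coprime, \cref{th:int1}(i) gives $d(\Sigma(2,q,r))>0$. Equivalently, \cref{th:int3}(i) gives the same conclusion.

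\textbf{Step 2: the obstruction.} We recall that $d$ is a rational homology cobordism invariant and that $d(S^3)=0$. Suppose $Y=\Sigma(2,q,r)$ bounds a rational homology ball $W$. Removing a small $4$-ball from the interior of $W$ produces a rational homology cobordism from $S^3$ to $Y$. Hence $d(Y)=d(S^3)=0$, which contradicts Step 1. This argument does not need the fact that $Y$ is an integer homology sphere; that hypothesis only ensures that $d$ is defined for the unique spin$^c$ structure.

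\textbf{Step 3: the interval $(\frac12,\frac23)$.} It remains to check that $(\frac12,\frac23)\cap\mathbb{Q}\subset\mathcal{M}$. The interval $[\frac12,\frac23]$ is itself an even interval: take $a/b=1/2$ and $c/d=2/3$. Then $bc-ad=2\cdot 2-1\cdot 3=1$, and both $b=2$ and $c=2$ are even. So every $q/r$ in its interior lies in $\mathcal{M}$.

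Alternatively, \cref{thm:nonvanishing_intro} gives $d>0$ on this interval directly. This is in fact the input used to prove \cref{th:int1}(i), via the $\widetilde{\mathfrak{m}}^*$ orbits.

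There is no real obstacle here: all the depth lies in \cref{th:int1} and \cref{thm:nonvanishing_intro}. The only point needing care is the sign and orientation convention. The obstruction needs only $d\neq 0$, so either orientation works, since $d(-Y)=-d(Y)$.
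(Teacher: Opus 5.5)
Your proposal is correct and follows the paper's implicit argument: Theorem~\ref{th:int1}(i) gives $d(\Sigma(2,q,r))>0$ for $q/r\in\mathcal{M}$, and an integral homology sphere bounding a rational homology ball has $d=0$ by rational homology cobordism invariance, as noted in Section~\ref{ss:top}. Your observation that $[\frac12,\frac23]$ is itself an even interval ($bc-ad=4-3=1$ with $b=c=2$) correctly yields the ``in particular'' clause.
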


\subsection{Paper Outline} In \cref{subsec:main_results}, we gave a summary of the main results, defining most of the transformations we will use in later sections to verify our results about the $d$-invariant. The reader interested primarily in using our calculations to find the $d$-invariants for specific values of $(p, q, r)$ may find in \cref{sec:examples} worked calculations for several specific values of $(2, q, r)$. In \cref{sec:even_plumbings} one can find a more complete description of the motivations of the present work in terms of the previous results of Yamada and Matsumoto on \emph{even plumbings},~\cite{Yamada_Matsumoto_1983}, which likewise provides a large number of concrete examples.

In \cref{sec:sb-combinatorics}, we discuss the \emph{Stern--Brocot tree}, the primary combinatorial tool used to find the tuples $(q, r)$ for which $d( \Sigma(2, q, r) ) = 0$. \cref{sec:brieskorn_notation} establishes the notation for lattices and singularities which will be used in verifications which follow. \cref{s:Gs} contains the key lattice computations which will be used in \cref{sec:proof_main_thms} to show that the operators $h_1, h_2,$ and $h_4$ preserve the $d$-invariant. In \cref{sec:equivalent_lattices}, we give the proof that the transformation $S_p$, which reflects a key interval in the Stern--Brocot sequence, still preserves the lattice corresponding to $(p, q, r)$; we also discuss in somewhat more detail the surgery description of the transformations $h_1, h_2,$ and $h_4$. Finally, in \cref{s:6} and \cref{sec:proof_main_thms}, we use the preceding lattice calculations, along with a result of Lecuona and Lisca, to verify our main claim.

\subsection*{Acknowledgments} We are grateful to Brandon J. Robinson for aiding computer computations at the start of the project. We thank Nicola Tarasca, Allison Moore and Lisa Piccirillo for useful discussions. S. Harichurn was supported by the FirstRand FNB 2020 Fund Education Scholarship. A. N\'emethi was partially supported by NKFIH Grant ``\'Elvonal (Frontier)'' KKP 144148. I. Khan and
J. Svoboda were supported by the Simons Grant \emph{New structures in Low-Dimensional Topology}. 

\subsection*{Note on AI use} All mathematical arguments and ideas appearing in the paper are due to the authors. We used LLMs for implementing the algorithm for computing the $d$-invariant from \cite{Nemethi2005}, and for helping with the TikZ graphics. Before arXiv submission, we checked the manuscript with an LLM and corrected a few minor inaccuracies. The code for the $d$-invariants is available upon request. 

\section{Some combinatorics of the Stern--Brocot sequences}\label{sec:sb-combinatorics}

Recall that the Stern--Brocot sequences $\{\SB_n\}_{n\geq 0}$ of the interval $[0,1]$
are defined inductively as follows. Each $\SB_n$ is an increasing sequence of rational numbers in the interval $[0,1]$. 
We start with $\SB_0=\{\frac{0}{1}, \frac{1}{1}\}$.
Then the sequence $\SB_{n+1}$ consists of the elements of $\SB_n$ together with some new ones: 
for each adjacent pair $\frac{a}{b}<\frac{c}{d}$ in $\SB_n$, we add the fraction $\frac{a+c}{b+d}$ between them. 
E.g., $\SB_1=\{\frac{0}{1}, \frac{1}{2},\frac{1}{1}\}$ and $\SB_2=\{\frac{0}{1}, \frac{1}{3},\frac{1}{2},
 \frac{2}{3}, \frac{1}{1}\}$. 

If $\frac{a}{b}<\frac{c}{d}$ is an adjacent pair in $\SB_n$, then $ad-bc=-1$. We call $[\frac{a}{b},\frac{c}{d}]$
an \emph{elementary interval} of $\SB_n$.  
We sometimes denote a sequence of consecutive fractions $\frac{a}{b}<\frac{c}{d}<\frac{e}{f}<\cdots$ in $\SB_n$ by the 
matrix $\begin{inlinematrix} a & c & e &\cdots \\b & d & f &\cdots \end{inlinematrix}$.
For such a matrix, the $({\rm mod}\ 2)$ matrix means that we replace each entry by its remainder $({\rm mod}\ 2)$.

One may also construct the classical Stern--Brocot tree $\mathcal{T}$ from the Stern--Brocot sequences as follows: 
  if $\frac{a}{b}<\frac{c}{d}<\frac{e}{f}$ are adjacent fractions in $\SB_n$ such that 
  $\frac{c}{d}$ is newly created in $\SB_{n}$, then in this tree $\frac{c}{d}$
  is connected by an edge with the fractions $\frac{a+c}{b+d}$ and $\frac{c+e}{d+f}$ of $\SB_{n+1}$.
  Hence, each fraction (except $\frac{0}{1}$ and $\frac{1}{1}$) appears as a vertex
  of the tree, it has one ancestor and two `children'. See \cref{fig:tree}.

  \begin{figure}
\centering
\colorlet{gray}{gray!50}
\begin{forest}
  for tree={
    circle,
    draw,
    thick,
    minimum size=6pt,   
    inner sep=0pt,      
    l sep=4mm,          
  },
  where level<=2{s sep=9mm}{},
  where level=3{s sep=3mm, l sep=1mm}{},     
  where level=4{s sep=1.5mm, l sep=1mm}{},   
  where level>=4{minimum size=3.5pt, thin, l=4mm}{}, 
  [, bdot, label=above:{$1/2$}, name=n12,
    [, wdot, label=above:{$1/3$},
      [, bdot, label={[xshift=-5pt]above:{$1/4$}}, name=n14,
        [, wdot, label=above:{$1/5$}
          [,bdot,[],[,bdot,edge={line width=1.2pt}]] [,bdot,[,bdot,edge={line width=1.2pt}][]]
        ]
        [, bdot, label=above:{$2/7$},
          edge={line width=1.2pt}
          [, edge=gray, for descendants={edge=gray},gdot,[,gdot][,gdot]] [,bdot,[],
          [,bdot,edge={line width=1.2pt}]]
        ]
      ]
      [, bdot, label=above:{$2/5$}, name=n25,
        [, bdot, label=above:{$3/8$},
          edge={line width=1.2pt},
          [,bdot,[,bdot,edge={line width=1.2pt}][]] [, edge=gray, for descendants={edge=gray},gdot,[,gdot][,gdot]]
        ]
        [, wdot, label=above:{$3/7$}
          [,bdot,[],[,bdot,edge={line width=1.2pt}]] [,bdot,[,bdot,edge={line width=1.2pt}][]]
        ]
      ]
    ]
    [, bdot, label=above:{$2/3$},
      edge={line width=1.2pt}, 
      [, gdot, edge=gray, for descendants={edge=gray}, label={[gray]above:{$3/5$}}
        [, gdot, label={[gray]above:{$4/7$}}
          [,gdot,[,gdot][,gdot]] [,gdot,[,gdot][,gdot]]
        ]
        [, gdot, label={[gray]above:{$5/8$}}
          [,gdot,[,gdot][,gdot]] [,gdot,[,gdot][,gdot]]
        ]
      ]
      [, bdot, label={[xshift=5pt]above:{$3/4$}}, name=n34,
        [, wdot, label=above:{$5/7$}
          [,bdot,[],[,bdot,edge={line width=1.2pt}]] [,bdot,[,bdot,edge={line width=1.2pt}][]]
        ]
        [, bdot, label=above:{$4/5$},
          edge={line width=1.2pt}
          [, edge=gray, for descendants={edge=gray},gdot,[,gdot][,gdot]] [,bdot,[][,bdot,edge={line width=1.2pt}]]
        ]
      ]
    ]
  ]
  \draw[->, dashed, >=stealth, shorten <= 5pt, shorten >= 5pt] (n12) to[out=160, in=60, looseness=1]
    node[midway, above, align=center] {$h_1$} (n14);
  \draw[->, dashed, >=stealth,shorten <= 5pt, shorten >= 5pt] (n12) to[out=240, in=30, looseness=1]
    node[right, align=center] {$h_2$} (n25);
  \draw[->, dashed, >=stealth,shorten <= 5pt, shorten >= 5pt] (n12) to[out=20, in=120, looseness=1, shorten <= 50pt, shorten >= 15pt]
    node[midway, above, align=center] {$h_4$} (n34);
\end{forest}
\caption{Visualisation of the $d$-invariants of $\Sigma(2,q,r)$ on the Stern--Brocot tree. Empty dots denote V-fractions, for which $d=0$. Thick edges are the newly created black intervals. Shaded are the inherited black intervals. We have $d>0$ in the interior of any black interval. Matrices $h_1,h_2,h_4$ map the whole tree onto 3 subtrees, preserving the $d$-invariant.}
\label{fig:tree}
\end{figure}

\subsection{`Black intervals' and `V-fractions' in $[0,1]$.}

Below, for any fraction $\frac{a}{b}$, we assume $\gcd(a,b)=1$.
\begin{lemma}\label{lem:SB1}In $\SB_{2\ell}, \ell \geq 0$, any elementary interval $
\begin{inlinematrix} a & c \\b & d  \end{inlinematrix}$ $(\rm{mod} \ 2) $
has one of the forms
\[O_{\I} \coloneqq \begin{pmatrix} 0 & 1 \\ 1 & 1  \end{pmatrix}, \ \  
E \coloneqq \begin{pmatrix} 1 & 0 \\ 0 & 1  \end{pmatrix}, \ \  
O_{\II} \coloneqq \begin{pmatrix} 1 & 1 \\ 1 & 0  \end{pmatrix}\ \ \ \ ({\rm mod}\ 2).
\]
\end{lemma}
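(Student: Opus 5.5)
We argue by induction on $\ell$, tracking elementary intervals modulo $2$. Each elementary interval $\begin{inlinematrix} a & c \\ b & d \end{inlinematrix}$ has determinant $bc-ad=1$, so its reduction mod $2$ lies in $\GL(2,\mathbb{F}_2)$. This group has six elements. Three of them are $O_{\I}$, $E$, $O_{\II}$. The other three are
\[
u=\begin{pmatrix} 0&1\\1&0\end{pmatrix},\quad A=\begin{pmatrix} 1&0\\1&1\end{pmatrix},\quad B=\begin{pmatrix} 1&1\\0&1\end{pmatrix}.
\]
The claim is that in even generations only the first three occur.

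For the base case $\ell=0$, the only elementary interval of $\SB_0$ is $[\frac01,\frac11]$, with matrix $\begin{inlinematrix} 0&1\\1&1\end{inlinematrix}=O_{\I}$.

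For the inductive step, note that passing from $\SB_n$ to $\SB_{n+1}$ replaces each elementary interval with matrix $M$ by two intervals with matrices $M\begin{inlinematrix}1&1\\0&1\end{inlinematrix}$ and $M\begin{inlinematrix}1&0\\1&1\end{inlinematrix}$. Passing from $\SB_{2\ell}$ to $\SB_{2\ell+2}$ therefore replaces $M$ by the four intervals $MX$, where $X$ ranges over all products of two factors from $\{B,A\}$. Modulo $2$ these products are
\[
B^2=E,\qquad A^2=E,\qquad BA=\begin{pmatrix}0&1\\1&1\end{pmatrix},\qquad AB=\begin{pmatrix}1&1\\1&0\end{pmatrix}.
\]
So the set $\{E,O_{\I},O_{\II}\}$ is exactly the set $H$ of products of two generators. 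It remains to check that $H$ is closed under multiplication by these same products. Indeed, $H=\{E,\sigma,\sigma^2\}$ is the cyclic subgroup of order $3$ (the alternating group inside $\GL(2,\mathbb{F}_2)\cong S_3$): one has $O_{\I}^2=\begin{inlinematrix}1&1\\1&0\end{inlinematrix}=O_{\II}$ and $O_{\I}^3=E$.

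Consequently, every elementary interval of $\SB_{2\ell+2}$ reduces mod $2$ to $M X$ with both $M$ and $X$ in the subgroup $H$, hence lies in $H$. This completes the induction.

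The only point needing care is the bookkeeping that the two children of $\begin{inlinematrix}a&c\\b&d\end{inlinematrix}$ are right multiplications by $B$ and $A$. This holds because the new columns are $\binom{a}{b}$, $\binom{a+c}{b+d}$, $\binom{c}{d}$. The rest is a finite computation in $\GL(2,\mathbb{F}_2)$.

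As a side remark, odd generations contain exactly the complementary coset $\{u,A,B\}$ of $H$.
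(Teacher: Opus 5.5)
Your proof is correct and follows essentially the paper's route. The paper justifies the lemma by the explicit two-step subdivisions $O_{\I}\mapsto D_{\I}$ and $O_{\II}\mapsto D_{\II}$, and in \S2.2 it writes these steps as right multiplication by $G_1=LL,\dots,G_4=RR$; this is your observation that these four words reduce mod $2$ into the order-$3$ subgroup $\{E,O_{\I},O_{\II}\}$ of $\GL(2,\mathbb{F}_2)$, and your closure argument also covers the black case $E$, which the paper leaves implicit.
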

It is convenient to analyse every two steps $\SB_{2\ell}\rightsquigarrow \SB_{2\ell+2}$ at the same time.
If an elementary interval of type $E$ modulo 2 appears in some 
$\SB_{2\ell}$, it is called a \emph{black interval}. The intervals of type $O_{\I}$ and $O_{\II}$ modulo 2 are called \emph{white intervals}.
In the  $\SB_{2\ell}\rightsquigarrow \SB_{2\ell+2}$ procedure, 
the white elementary intervals $O_{\I}$ and $O_{\II}$ subdivide into four intervals as follows:
\[(\text{Type } \I): \ \ 
O_{\I} \coloneqq \begin{pmatrix} 0 & 1 \\ 1 & 1  \end{pmatrix}   \mapsto  D_{\I} \coloneqq 
\begin{pmatrix} 0 &1 & 1 & 0 & 1 \\ 1 & 1 & 0 & 1 & 1\end{pmatrix} \ \ \ ({\rm mod} \ 2);\]
\[
(\text{Type } \II): \ \ 
O_{\II} \coloneqq \begin{pmatrix} 1 & 1 \\ 1 & 0  \end{pmatrix}   \mapsto D_{\II} \coloneqq 
\begin{pmatrix} 1 &1 & 0 & 1 & 1 \\ 1 & 0 & 1 & 1 & 0\end{pmatrix} \ \ \ ({\rm mod} \ 2).\]
Along the step $\SB_{2\ell}\rightsquigarrow \SB_{> 2\ell}$, any elementary interval obtained by the 
subdivision of a black elementary interval will be called \emph{inherited black}
(even if their form modulo 2 is not $E$).
On the other hand, along the step $\SB_{2\ell}\rightsquigarrow \SB_{2\ell+2}$, 
any white elementary interval will be divided into four intervals, 
and the unique one which $E$ modulo 2, will 
be a black elementary interval of $\SB_{2\ell+2}$, which we call \emph{newly created} ones. It is the third (resp. second) interval 
in the case of Type I (resp. Type II) -- see the matrices $D_{\I}$ and $D_{\II}$. 

\begin{figure}[ht]
    \centering
    \input{graphics/type-i-ii/interval_subdivision}
    \caption{Subdivision modulo 2 of the white elementary intervals $O_{\I}$ and $O_{\II}$. V-fractions are indicated by \hollowvertex{}; bold edges indicate black intervals.}
    \label{fig:interval_subdivision}
\end{figure}

Also, along such a step, we also choose a single fraction  from the three newly created rational numbers: 
the first one for the Type I and the third one for the Type II. We call them \emph{V-fractions}. See \cref{fig:interval_subdivision}.

\begin{example}
    $\SB_0$ has no black interval and no V-fraction.
$\SB_2$ has one black interval, namely $[\frac{1}{2}, \frac{2}{3}]$,  and one 
V-fraction,  $\frac{1}{3}$. In $\SB_4$, three newly created intervals appear:
 $[\frac{1}{4}, \frac{2}{7}]$,  $[\frac{3}{8}, \frac{2}{5}]$,  $[\frac{3}{4}, \frac{4}{5}]$.
 The corresponding V-fractions are  $\frac{1}{5}$, $\frac{3}{7}$ and  $\frac{5}{7}$.

\begin{center}
\begin{tikzpicture}[x=11.5cm, y=1.2cm,
      dot/.style       = {circle, fill, inner sep=0pt, minimum size=3.6pt},
      hollow/.style    = {circle, draw, fill=white, inner sep=0pt,
                          minimum size=4.4pt, line width=0.6pt},
      lab/.style       = {above=3pt, inner sep=1pt},
      subint/.style    = {line width=1.8pt},
      inherited/.style = {subint}]   

  \foreach \y/\lev in {2/0, 1/2, 0/4} {
    \draw (0,\y) -- (1,\y);
    \node[left=6pt] at (0,\y) {$\SB_{\lev}$};
  }

  \draw[subint] (1/2,1) -- (2/3,1);
  \draw[subint] (1/4,0) -- (2/7,0)  (3/8,0) -- (2/5,0)  (3/4,0) -- (4/5,0);
  \draw[inherited] (1/2,0) -- (2/3,0);

  \foreach \y/\p/\q in {
      2/0/1, 2/1/1,
      1/0/1, 1/1/3, 1/1/2, 1/2/3, 1/1/1,
      0/0/1, 0/1/5, 0/1/4, 0/2/7, 0/1/3, 0/3/8, 0/2/5, 0/3/7, 0/1/2,
      0/4/7, 0/3/5, 0/5/8, 0/2/3, 0/5/7, 0/3/4, 0/4/5, 0/1/1} {
    \node[dot] at (\p/\q,\y) {};
    \node[lab] at (\p/\q,\y) {$\frac{\p}{\q}$};
  }

  \foreach \y/\p/\q in {1/1/3, 0/1/5, 0/1/3, 0/3/7, 0/5/7}
    \node[hollow] at (\p/\q,\y) {};
\end{tikzpicture}
\end{center}
 
\end{example}

\begin{remark}
    Note that black intervals and V-fractions appear in pairs (having the same white interval ancestor). In each $\SB_{2\ell}$ $(\ell\geq 1)$, there are $3^{\ell-1}$ newly created black intervals and the same number of V-fractions.
\end{remark}

The corresponding picture for the Stern--Brocot tree $\mathcal{T}$ is shown in \cref{fig:tree_subdivision}. 
Note that white intervals may not always appear as edges on $\mathcal{T}$, but at least one of their endpoints will appear as a vertex. The only exception to this is the white interval $[\frac{0}{1}, \frac{1}{1}]$ which, after a Type I move, produces (per the \cref{fig:tree_subdivision} convention):
 \begin{equation}
     \TypeIinline.
 \end{equation}

Recall from the introduction that if we have two rational numbers $0<\frac{a}{b} < \frac{c}{d} <1$ with $ad-bc=-1$ and $b,c$ even numbers, we call the corresponding interval $[\frac{a}{b},\frac{c}{d}]$ an \emph{even interval}. A \emph{maximal even interval} is an even interval maximal w.r.t. the inclusion. 
 \begin{lemma}[First characterization of black intervals]\label{lem:SB2} Any maximal even interval is a black interval at some step $\SB_{2\ell}$ of the 
 Stern--Brocot sequences. 
 \end{lemma}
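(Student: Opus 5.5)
Every Farey pair appears as an elementary interval in some Stern--Brocot sequence, so the work is in getting the parity of the step and the mod $2$ type right, and then using maximality.

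First, let $[\frac ab,\frac cd]$ be an even interval, so $bc-ad=1$ and $b,c$ are even. It is then an elementary interval of some $\SB_n$; this is the standard fact that every pair of fractions in $[0,1]$ with determinant $1$ is adjacent in some Stern--Brocot sequence. Modulo $2$ its matrix is $\begin{inlinematrix} a & 0\\ 0 & d\end{inlinematrix}$. Because $bc-ad=1$ with $b,c$ even, $ad$ is odd, so the matrix is $E$ modulo $2$. Take $n$ minimal, i.e.\ the step at which the interval is first created.

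Next, fix the parity of $n$. In $\SB_n$ the new interval $[\frac ab,\frac cd]$ is produced by subdividing an elementary interval $J$ of $\SB_{n-1}$, namely $\begin{inlinematrix} a & c-a\\ b& d-b\end{inlinematrix}$ or $\begin{inlinematrix} a-c & c\\ b-d & d\end{inlinematrix}$. Modulo $2$, $J$ is $\begin{inlinematrix}1&1\\0&1\end{inlinematrix}$ or $\begin{inlinematrix}1&0\\1&1\end{inlinematrix}$. Neither of these is among $O_{\I},E,O_{\II}$, so by \cref{lem:SB1} the step $n-1$ is odd and $n=2\ell$ is even. Thus $[\frac ab,\frac cd]$ is an elementary interval of $\SB_{2\ell}$ of type $E$, hence a black interval. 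The same argument shows that every black interval (of type $E$, in an even step) with endpoints in $(0,1)$ is an even interval.

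The remaining point, and the main subtlety, is maximality. The definition of a black interval presumably intends newly created intervals, and one must rule out an $E$-type interval of $\SB_{2\ell}$ that is only an inherited subdivision of an older black interval. Suppose this happens, so $[\frac ab,\frac cd]\subset[\frac{a'}{b'},\frac{c'}{d'}]$, where the latter is newly created black in some earlier $\SB_{2k}$. By the previous paragraph the larger interval is even whenever its endpoints are in $(0,1)$. Newly created black intervals come from the middle of a subdivided white interval (see $D_{\I},D_{\II}$), so their endpoints lie strictly inside $(0,1)$, and the larger interval is even.

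Now apply maximality. A maximal even interval cannot be strictly contained in another even interval, so it cannot be inherited. Therefore it is newly created black at step $2\ell$, which also gives the parity count claimed in the statement.

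I expect the most care to be needed in the parity argument, in particular checking the parent's mod $2$ form against \cref{lem:SB1}, and in the boundary cases where an endpoint equals $0$ or $1$. The latter are excluded because $b,c$ even together with $0<\frac ab<\frac cd<1$ rules out the fractions $\frac01$ and $\frac11$.
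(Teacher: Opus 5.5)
Your proof is correct and follows the paper's argument. Like the paper, you realize the even interval as an elementary interval of some $\SB_n$ and show $n$ is even; your observation that the parent interval has a mod $2$ type excluded by \cref{lem:SB1} is a clean way to carry out the ``induction'' the paper leaves unspecified. Your extra paragraph, which uses maximality to exclude inherited intervals, goes beyond the paper's two-line proof: it supplies the ``newly created'' refinement that the paper later relies on in \cref{sec:proof_main_thms}.
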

 \begin{proof}
    It is known that any two rational numbers $0<\frac{a}{b}< \frac{c}{d}<1$ with $ad-bc=-1$ appear as adjacent fractions in $\SB_n$ for some $n$. Induction and the construction of the sequences of $\SB_n$ show that
    if $b$ and $c$ are even, then so is $n$. 
 \end{proof}

\begin{lemma}\label{lem:SB3}
\leavevmode
\begin{enumerate}[label=(\alph*)] 
\item The V-fractions as constructed above are all distinct.

\item Let $\mathcal{V}$ denote the set of V-fractions. Then
$(0,1)\cap\mathbb{Q}= \mathcal{V}\sqcup ( \cup\{\text{black intervals}\})$.
\end{enumerate}
 \end{lemma}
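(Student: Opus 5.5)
The plan is to track, for each $\ell\geq 0$, the structure of $\SB_{2\ell}$: its elementary intervals are either \emph{black} (inherited or newly created, both descending from a black interval) or \emph{white} (of type $O_{\I}$ or $O_{\II}$ mod 2, by \cref{lem:SB1}). The claim I would prove by induction on $\ell$ is the following. Every rational in $(0,1)$ that appears in $\SB_{2\ell}$ is either a V-fraction created at some step $\le 2\ell$, or lies in a black interval of some $\SB_{2k}$, $k\le\ell$. Moreover, the white elementary intervals of $\SB_{2\ell}$ have interiors disjoint from all black intervals created so far, and the V-fractions created up to $\SB_{2\ell}$ are pairwise distinct and lie in no black interval.

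For the inductive step $\SB_{2\ell}\rightsquigarrow\SB_{2\ell+2}$, I use the explicit subdivisions $D_{\I}$ and $D_{\II}$. A white interval $[\frac ab,\frac cd]$ splits into four elementary intervals using three new fractions. For Type I, these are, in order: the V-fraction; the left endpoint of the newly created black interval; its right endpoint. The four subintervals are then white, white, black, white. Reading the parities from $D_{\I}$, the intervals $\begin{inlinematrix}0&1\\1&1\end{inlinematrix}$, $\begin{inlinematrix}1&1\\1&0\end{inlinematrix}$ and $\begin{inlinematrix}0&1\\1&1\end{inlinematrix}$ are white of types I, II, I. The V-fraction is the common endpoint of the first two white intervals, so it lies in the open white interval of $\SB_{2\ell}$ but outside the new black interval. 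Type II is symmetric: the V-fraction is the third new point and the black interval is the second subinterval. Black intervals simply subdivide, and their new points stay inside them.

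Part (a) then follows. Each V-fraction lies in the interior of a distinct white interval of $\SB_{2\ell}$, and these interiors are disjoint. Earlier V-fractions are endpoints of intervals of $\SB_{2\ell}$, so they are never interior points, and hence never coincide with later ones. For the same reason a V-fraction avoids all black intervals. It avoids the newly created black interval by the local picture above, and it avoids earlier black intervals because those are disjoint from the interiors of white intervals. Later black intervals are created inside white intervals of later stages, which have interiors disjoint from existing points such as the V-fraction. This gives the disjointness in (b).

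For the covering statement in (b), every rational in $(0,1)$ appears in some $\SB_n$, hence in $\SB_{2\ell}$ for $\ell$ large. All new points of $\SB_{2\ell+2}$ are either inside old black intervals, the V-fraction, or endpoints of the new black interval. This uses that each white interval yields exactly three new points, and that the two non-V points are endpoints of the black subinterval. Induction then covers everything, with the base case $\SB_2$: $\frac13$ is the V-fraction and $\frac12,\frac23$ are the endpoints of the black interval.

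The main obstacle is bookkeeping the mod 2 patterns correctly, in particular verifying that in $D_{\I}$ and $D_{\II}$ the non-V new points are exactly the endpoints of the black subinterval. This is a direct check of the displayed matrices.
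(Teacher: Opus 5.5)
Your proposal is correct and follows essentially the same route as the paper. Part (a) rests on each rational appearing as a new point of the Stern--Brocot sequences only once. Part (b) rests on the local Type I/Type II subdivision picture: the V-fraction lies in the open white interval but outside its paired black interval, the newly created black interval sits strictly inside the white interval, and the two non-V new points are its endpoints. Your case split by creation time is an explicit version of the paper's comparison of the steps $m$ and $n$. Your covering step is in fact slightly more careful than the paper's displayed decomposition of $\mathrm{SB}_{2\ell+2}$, because you also account for the new points created inside already existing black intervals.
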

 
 \begin{proof}
 \leavevmode
 \begin{enumerate}[label=(\alph*), listparindent=0pt]
     \item Any rational number appears in the $\{\SB_n\}$-sequences only once.

     \item First we show that $\mathcal{V}\cap ( \cup\{\text{black intervals}\})=\emptyset$.
     Indeed, assume the contrary and set $v\in\mathcal{V}\cap I$, where $I$ is a black interval. Assume that the V-fraction $v$ was created in $\SB_n$, while $I$ in $\SB_m$.  
     Since V-fractions appear only in the interior of a white interval, 
     $I$ cannot be created in an earlier step than the V-fraction $v$, i.e. $m>n$. 
     But as any black interval is strictly in the interior of a white interval, it cannot contain the V-fractions created earlier, i.e. $n>m$.

 On the other hand, 
$[0,1]\cap\mathbb{Q}\subset \mathcal{V} \cup ( \cup\{\mbox{black intervals}\})$ since 
 along the step $\SB_{2\ell}\rightsquigarrow \SB_{ 2\ell+2}$ we have the equality
\begin{align*}
   \SB_{2\ell+2}&=\SB_{2\ell}\sqcup \{\mbox{newly created V-fractions}\}\\&\sqcup
     \{\mbox{endpoints of the newly created black intervals}\}, 
\end{align*}
     and every element of $(0,1)\cap\mathbb{Q}$ appears in some $\SB_{2\ell}$. \qedhere
\end{enumerate}
 \end{proof}
\begin{lemma}[First characterization of V-fractions]\label{lem:SB4}
If a newly created interval  $[\frac{a}{b},  \frac{c}{d}]$
and a V-fraction $v$ form a pair (that is, if they have the same white interval ancestor)
then 
\begin{equation}\label{eq:3ac}
v=\begin{cases} \frac{3a-c}{3b-d}  & \text{if \ $b<d$},\\[1ex]
 \frac{3c-a}{3d-b}  & \text{if \ $b>d$}. \end{cases}
\end{equation}
\end{lemma}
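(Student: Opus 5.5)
The plan is a direct computation inside a single white elementary interval, in its two possible types. Let $[\frac{x}{y},\frac{z}{w}]$ be the white ancestor in $\SB_{2\ell}$, with $xw-yz=-1$. Two steps of the Stern--Brocot procedure produce the five consecutive fractions
\[
\frac{x}{y}<\frac{2x+z}{2y+w}<\frac{x+z}{y+w}<\frac{x+2z}{y+2w}<\frac{z}{w}.
\]
The reduction modulo 2 is exactly the matrix $D_{\I}$ or $D_{\II}$, according to the type.

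\textbf{Type I.} The newly created black interval is the third one, $[\frac{a}{b},\frac{c}{d}]=[\frac{x+z}{y+w},\frac{x+2z}{y+2w}]$, and the V-fraction is the first new one, $v=\frac{2x+z}{2y+w}$. From the two endpoints I recover $z=c-a$, $w=d-b$, $x=2a-c$, $y=2b-d$. Hence
\[
2x+z=3a-c, \qquad 2y+w=3b-d,
\]
so $v=\frac{3a-c}{3b-d}$. Here $b-d=-w<0$, i.e.\ $b<d$, which is the first case of \eqref{eq:3ac}. One exception must be handled: if $w=0$ this fails, but $w=0$ cannot occur inside $[0,1]$ except for the fraction $1/0$, which does not arise. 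So $w\ge1$ and the inequality $b<d$ is strict.

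\textbf{Type II.} The black interval is the second one, $[\frac{a}{b},\frac{c}{d}]=[\frac{2x+z}{2y+w},\frac{x+z}{y+w}]$, and $v=\frac{x+2z}{y+2w}$. Inverting gives $x=a-c$, $y=b-d$, $z=2c-a$, $w=2d-b$. Hence
\[
x+2z=3c-a, \qquad y+2w=3d-b,
\]
so $v=\frac{3c-a}{3d-b}$. Here $b-d=y$, and this is positive unless $y=0$. But $y=0$ would force $\frac{x}{y}=\frac{1}{0}$, which is excluded since all fractions lie in $[0,1]$ with positive denominators. So $b>d$, matching the second case.

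The only point needing care is confirming that the two cases are distinguished exactly by the sign of $b-d$, so that no ambiguity arises. This follows from the computation above: $b-d=-w$ in Type I and $b-d=y$ in Type II, with $y,w>0$. Two further checks remain.

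First, the fractions obtained must be in lowest terms, so that $v$ is literally the stated fraction. This holds because adjacent Stern--Brocot mediants are reduced, being determinant-one combinations.

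Second, the type is detected correctly from the mod-$2$ pattern. The mod-$2$ pattern of the white ancestor (Lemma~\ref{lem:SB1}) determines which of the four subintervals is the black one. That choice is precisely the third subinterval for $O_{\I}$ and the second for $O_{\II}$, as read off from $D_{\I}$ and $D_{\II}$.
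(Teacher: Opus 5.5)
Your proof is correct and takes the same route as the paper. You recover the white ancestor $[\frac{x}{y},\frac{z}{w}]$ from the black interval by inverting the mediant relations, compute the paired V-fraction, and observe that $b-d=-w<0$ in Type I and $b-d=y>0$ in Type II; the paper states this in one line, and your explicit inversion supplies the details it leaves implicit.
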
 
\begin{proof}
    From the black interval, one can recover the ancestor white interval, hence the paired V-fraction as well. Type I (resp. Type II)
    corresponds to $b<d$ (resp. $b>d$). 
\end{proof}

\begin{figure}[htbp]
    \centering
   \input{graphics/type-i-ii/sb_tree_type_i_ii}
\caption{ A $(\operatorname{mod} \  2)$ schematic of how Type I and II moves applied to white intervals of $\SB_{2\ell}$ ($\ell \geq 1$) appear on the Stern--Brocot tree $\mathcal{T}$. If a white interval is not represented by an edge of $\mathcal{T}$ (or additionally has an endpoint in $\{\frac{0}{1}, \frac{1}{1}\}$), the corresponding edge on the right hand side (additionally the vertex corresponding to the endpoint) does not appear on the tree. The symbol, \circledvertex{}, denotes the possibility of either a standard or V-fraction.   
}
\label{fig:tree_subdivision}
\end{figure}

\subsection{Second characterization of $\mathcal{V}$ and of the set of 
black intervals}\label{ss:2.2}

When we pass from $\SB_{n}$ to $\SB_{n+1}$, each elementary interval of $\SB_{n}$ is divided into two elementary intervals of $\SB_{n+1}$. This can be described by two operators
\[
L \coloneqq \begin{pmatrix} 1 & 1 \\ 0 & 1  \end{pmatrix} \ \ \mbox{and} \ \ 
R \coloneqq \begin{pmatrix} 1 & 0 \\ 1 & 1  \end{pmatrix}.\] 
If the matrix $I=\begin{inlinematrix} a & c \\ b & d  \end{inlinematrix} $ represents 
the interval  $[\frac{a}{b},  \frac{c}{d}]$ in $\SB_n$, it is divided into
$\begin{inlinematrix} a & a+c & c \\ b & b+d & d  \end{inlinematrix}$
in $\SB_{n+1}$. Then the two intervals of the subdivision correspond to the matrices given by the right multiplication by $L$ and $R$, respectively: 
\[
\begin{pmatrix} a & a+c  \\ b & b+d  \end{pmatrix} = \begin{pmatrix} a & c \\ b & d  \end{pmatrix}
\begin{pmatrix} 1 & 1 \\ 0 & 1  \end{pmatrix}
\ \ \mbox{resp.} \ \ 
\begin{pmatrix}  a+c &c   \\  b+d & d \end{pmatrix} =\begin{pmatrix} a & c \\ b & d  \end{pmatrix}
\begin{pmatrix} 1 & 0 \\ 1 & 1  \end{pmatrix}.
\]
When we pass from 
$\SB_{2\ell}$ to $\SB_{2\ell+2}$, each elementary interval of $\SB_{2\ell}$ is divided into four elementary intervals of $\SB_{2\ell+2}$. The following four operators in ${\rm SL}(2,\Z)$ represent the four newly created intervals, in order from left to right:
\[
G_1 \coloneqq LL=\begin{pmatrix} 1 & 2 \\ 0 & 1  \end{pmatrix}, \
G_2 \coloneqq LR=\begin{pmatrix} 2 & 1 \\ 1 & 1  \end{pmatrix}, \
G_3 \coloneqq RL=\begin{pmatrix} 1 & 1 \\ 1 & 2  \end{pmatrix}, \
G_4 \coloneqq RR=\begin{pmatrix} 1 & 0 \\ 2 & 1  \end{pmatrix}.
\]

In what follows we also need the following two matrices:
\[ O \coloneqq \begin{pmatrix} 0 & 1 \\ 1 & 1  \end{pmatrix} \
\ \mbox{and} \ \ 
U \coloneqq  \begin{pmatrix} 0 & 1 \\ 1 & 0  \end{pmatrix}. 
\]
Since all elementary intervals of the different levels $\{\SB_{2\ell}\}_{\ell\geq 0}$
form a division of $[\frac{0}{1},  \frac{1}{1}]$
 (represented by $O$),
all these elementary intervals are parametrized as
$O\cdot g$, where $g$ 
is an element of the monoid $\mathfrak{M}$ generated by $G_1,G_2,G_3,G_4$ in ${\rm SL}(2,\Z)$. This action also shows that  $\mathfrak{M}$ is a free monoid. 
Let $\mathfrak{M}_\ell$ be the set of elements of $\mathfrak{M}$ containing $\ell$ letters. 
Then, in fact, the lexicographic order (from left to right) of monomials (elements) of $\mathfrak{M}_\ell$ corresponds to the order of the elementary intervals determined by  $\SB_{2\ell}$ (ordered from left to right). (E.g. in $\SB_4$, $G_1G_1<G_1G_2<G_1G_3<G_1G_4 <G_2G_1<\cdots$, corresponds to 
$[\frac{0}{1},  \frac{1}{5}]<  [\frac{1}{5},  \frac{1}{4}]<
[\frac{1}{4},  \frac{2}{7}]< \dots$).

Next, \emph{we wish to identify the black elementary intervals}. We have to analyse two cases:
if $I$ is a white interval of $\SB_{2\ell}$, then in the Type I case,
the newly created black interval (in $I$) is $I\cdot G_3$, while in the Type II case
it is $I \cdot G_2$. We would like to unify these two cases. Note that Type II
appears in the case when the interval $I$ is  realized as a $G_2$-image. We say that the operator $G_2$ reverses the ordering of the consecutive intervals. Therefore, 
we wish to replace the operator $G_2$ by $G_2U$. 

Note also that 
$\begin{inlinematrix} a & c \\ b & d  \end{inlinematrix}\cdot U=
\begin{inlinematrix} c & a \\ d & b  \end{inlinematrix}$ 
represents the very same interval with reversed orientation. 
We have the immediate identities of operators: $UL=RU$, $UR=LU$, hence 
$U G_1 = G_4 U$ and $U G_2 = G_3 U$.

We first consider the monoid generated in ${\rm GL}(2,\Z)$ by the elements 
\[g_1 \coloneqq G_1, \quad 
g_2 \coloneqq G_2 U = \begin{pmatrix} 1 & 2 \\ 1 & 1  \end{pmatrix}, \quad
g_3 \coloneqq G_3, \quad 
g_4 \coloneqq G_4, \quad 
u \coloneqq U. 
\]
Let 
$\mathfrak{m}_u$ be its submonoid consisting of the monomials where the
$ \{ \mbox{number of}\  g_2\mbox{'s}+  \mbox{number of} \ u\mbox{'s}\}$ is even.
Note that in $\mathfrak{m}_u$, we have the identities 
\begin{equation}\label{eq:com} 
u g_1 = g_4 u,\quad u g_4 = g_1 u, \quad u g_2 = g_3, \quad u g_3 = g_2 , \quad u^2 = 1.
\end{equation}
Then any element of $\mathfrak{M}$, by the substitution 
$G_1 \mapsto g_1 $, $G_2 \mapsto g_2 u $, 
$G_3 \mapsto g_3 $, $G_4 \mapsto g_4 $,
transforms into an element of $\mathfrak{m}_u$. 
Even more, if we denote by $\mathfrak{m}$ the monoid in ${\rm GL}(2,\Z)$ generated by 
$g_1, g_2, g_3, g_4$, then, by (\ref{eq:com}), any element of $\mathfrak{m}_u$ can be written in a unique form as 
$hu^\epsilon $, where $h\in \mathfrak{m}$ and $\epsilon =1$ when $h$ contains an odd number of $g_2$'s, and $\epsilon =0$ otherwise. Since any $h\in \mathfrak{m} $ determines
$hu^\epsilon\in \mathfrak{m}_u$ uniquely, the monoids $\mathfrak{m}$ and $\mathfrak{m}_u$ are isomorphic. Moreover, they
parametrize all the elementary intervals via $O\cdot hu^\epsilon $ (see \cref{fig:sb_characterization_two}). 
The advantage of this second reparametrization is the following:
\begin{lemma}[Second characterization of black intervals]\label{lem:SB6}

We write $\widetilde{\mathfrak{m}}$ for the free monoid generated by $g_1,g_2,g_4$. Then 
$O \cdot hu^\epsilon $ is a subinterval of an elementary black interval if and only if $h\in\mathfrak{m}$ contains the letter $g_3$. 

More precisely,  all the (non-inherited) elementary black intervals have the form 
 $O \cdot \tilde{h} g_3 u^\epsilon$, 
    where $\tilde{h}\in \widetilde{\mathfrak{m}}$, 
   $\epsilon\in\{0,1\}$, and $\epsilon=0$ exactly when the number of $g_2$'s in $\tilde{h}$ is even. 
    In the $\epsilon=0$ case the black interval is the output of a Type I step, while in the $\epsilon=1$ case of a Type II step. 
\end{lemma}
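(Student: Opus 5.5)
We argue by induction on the length $\ell$ of words, keeping track of the mod $2$ type of every elementary interval $O\cdot hu^\epsilon$ with $h\in\mathfrak{m}_\ell$. The first step is a parity bookkeeping lemma. For $h\in\widetilde{\mathfrak{m}}$ (no letter $g_3$), the interval $O\cdot hu^\epsilon$ is white. Moreover, with $\epsilon$ fixed by the parity of the number of $g_2$'s, the matrix $O h$ has mod $2$ form $O_{\I}$, and so $Ohu^\epsilon$ has form $O_{\I}$ when read with the correct orientation. Equivalently, $Oh\equiv O_{\I}$ if $\epsilon=0$, and $Oh\equiv O_{\I}U= O_{\II}$ (up to column swap) if $\epsilon=1$.

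This reduces to a finite mod $2$ check. Reduce $g_1,g_2,g_4$ mod $2$:
\[
g_1\equiv \begin{pmatrix}1&0\\0&1\end{pmatrix},\quad
g_2\equiv\begin{pmatrix}1&0\\1&1\end{pmatrix},\quad
g_4\equiv\begin{pmatrix}1&0\\0&1\end{pmatrix}.
\]
Right multiplication by $g_1$ or $g_4$ therefore preserves the mod $2$ class. Right multiplication by $g_2$ sends $O_{\I}\equiv\begin{pmatrix}0&1\\1&1\end{pmatrix}$ to $\begin{pmatrix}1&1\\0&1\end{pmatrix}\cdot$, which is, after checking, $O_{\II}$ up to orientation. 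Combining these with $O_{\I}U$ and $O_{\II}U$ shows that the parity of the number of $g_2$'s records exactly whether the current white interval is of Type I or Type II. This matches the unification $G_2\mapsto g_2u$ via \eqref{eq:com}.

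With this in hand, the description in \cref{lem:SB1} and the matrices $D_{\I},D_{\II}$ locate the newly created black interval inside a white interval $I$. For Type I it is $I\cdot G_3$; for Type II it is $I\cdot G_2$, which in the $u$-notation reads $I\cdot g_2u$. Write a white interval as $I=O\tilde h u^\epsilon$. If $\epsilon=0$ (Type I), the black child is $O\tilde h g_3$. If $\epsilon=1$ (Type II), the black child is $O\tilde h u\, g_2 u=O\tilde h g_3 u$ by $ug_2=g_3$. Both cases give the stated form $O\tilde h g_3u^\epsilon$. The remaining three children of $I$ are $O\tilde h g_iu^{\epsilon'}$ with $i\in\{1,2,4\}$, using $ug_1=g_4u$ and $ug_4=g_1u$ to move $u$ to the right. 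These are again white, so the induction continues.

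For the first assertion, note that once a word contains $g_3$, the corresponding interval lies inside the black interval $O\tilde h g_3u^\epsilon$ created at the first occurrence of $g_3$; this handles the inherited intervals. Conversely, a word without $g_3$ gives a white interval, which is not contained in any black elementary interval: by induction and the strict nesting used in \cref{lem:SB3}, white intervals are disjoint from the interiors of black ones. Freeness of $\widetilde{\mathfrak{m}}$ follows from freeness of $\mathfrak{M}$ together with the isomorphism $\mathfrak{m}\cong\mathfrak{m}_u$. I expect the main obstacle to be the orientation bookkeeping in the parity lemma, namely confirming that $O_{\I}\cdot g_2$ really yields the Type II pattern in the orientation dictated by $u^\epsilon$. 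I would verify this directly on $D_{\I}$ and $D_{\II}$ in the first two levels and then propagate it by the commutation relations.
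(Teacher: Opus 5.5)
Your argument is essentially the paper's own: the discussion preceding the lemma unifies Type I and Type II through $G_2=g_2u$ and the relations $ug_1=g_4u$, $ug_2=g_3$, $ug_3=g_2$, $ug_4=g_1u$. In that form the black child of a white $I=O\tilde h u^\epsilon$ is $IG_3=O\tilde h g_3$ or $IG_2=O\tilde h g_3u$, and your level-by-level induction just spells this out. Two statements in your write-up are false as written and need fixing.

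\textbf{The parity lemma.} As stated in your opening paragraph it is wrong. For $\epsilon=1$ one has $Oh\equiv O_{\mathrm{II}}U=\bigl(\begin{smallmatrix}1&1\\0&1\end{smallmatrix}\bigr)$, not $O_{\mathrm{I}}$. Also $O_{\mathrm{I}}U=\bigl(\begin{smallmatrix}1&0\\1&1\end{smallmatrix}\bigr)$ is not $O_{\mathrm{II}}$, even after swapping columns. Your version would make every white interval Type I, whereas $Og_2u=[\frac13,\frac12]$ is Type II.

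The correct statement is $Ohu^\epsilon\equiv O_{\mathrm{I}}$ if $\epsilon=0$ and $Ohu^\epsilon\equiv O_{\mathrm{II}}$ if $\epsilon=1$. It follows from your own computation $O_{\mathrm{I}}g_2\equiv O_{\mathrm{II}}U$ together with $g_2^2\equiv 1 \pmod 2$.

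The orientation issue you single out as the main obstacle is automatic. We have $\det g_2=\det u=-1$ and the other generators have determinant $1$. So $\det(Ohu^\epsilon)=-1$ whenever $\epsilon$ is the parity of the number of $g_2$'s, which means the first column is always the left endpoint. Equivalently, your induction already produces the children as $IG_i$, which are read left to right.

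\textbf{The disjointness claim.} ``White intervals are disjoint from the interiors of black ones'' is false: $[0,1]$ is white and contains $[\frac12,\frac23]$. What you need is that a white elementary interval $W$ of $\mathrm{SB}_{2\ell}$ is not contained in a black interval $B$ created in $\mathrm{SB}_{2\ell'}$.
\begin{itemize}
\item If $\ell'\le\ell$, the ancestor of $W$ in $\mathrm{SB}_{2\ell'}$ corresponds to a $g_3$-free prefix. Hence it is white and distinct from $B$, so it meets $B$ in at most an endpoint.
\item If $\ell'>\ell$, then $B$ lies strictly inside a single elementary interval of $\mathrm{SB}_{2\ell}$, so it cannot contain $W$.
\end{itemize}
With these two corrections your proof is complete.
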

Next, we parametrise the V-fractions in a similar way. 
If $O\cdot \tilde{h} g_3$ is a black interval created by a Type I step,
 then $O\cdot \tilde{h} g_2 u$ is its previous interval whose left boundary 
 (first column) is the V-fraction $v$, paired to $O\cdot \tilde{h} g_3$. 
 That is, $v=O\cdot \tilde{h} g_2 u\binom{1}{0}=
 O\cdot \tilde{h}\binom{2}{1}$. As an example, in $\SB_4$ on \cref{fig:sb_characterization_two}, one sees the black interval $O\cdot g_4g_3$ paired to the V-fraction $\frac{5}{7}$ which is the left boundary of $O\cdot g_4g_2 u$.

 If $O\cdot \tilde{h} g_3 u$ is a black interval created by a Type II step,
 then $O\cdot \tilde{h} g_2 $ is its next interval  whose right boundary 
 (second  column) is the V-fraction $v$, paired to $O\cdot \tilde{h}g_3u$. 
 That is, $v=O\cdot \tilde{h} g_2 \binom{0}{1}=
 O\cdot \tilde{h}\binom{2}{1}$.

\begin{figure}[b]
    \centering
    \input{graphics/type-i-ii/sb_characterization_two}
    \caption{Parameterization of intervals of $\SB_0, \SB_2, \SB_4$ by elements of the monoid $\mathfrak{m}_u$.}
    \label{fig:sb_characterization_two}
\end{figure}
 
\begin{corollary}[Second characterization of V-fractions]\label{cor:V} \leavevmode

\begin{enumerate}[label=(\alph*)] 
\item The map $\widetilde{\mathfrak{m}}\ni \tilde{h}\mapsto O\cdot \tilde{h}\binom {2}{1}\in [0,1]\cap \mathbb{Q}$ is injective.

\item The set $\mathcal{V}$ of V-vertices is parametrized by 
$O\cdot \tilde{h}\binom {2}{1}$, for all $\tilde{h}\in \widetilde{\mathfrak{m}}$. 
 All the V-vertices in $\SB_{2\ell+2}$ are parametrized as 
$O\cdot \tilde{h}\binom {2}{1}$, where 
 $\tilde{h}\in \widetilde{\mathfrak{m}}$ has $\ell$ letters. 
\end{enumerate}
\end{corollary}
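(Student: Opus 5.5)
The plan is to read both parts off the pairing between newly created black intervals and V-fractions, using the parametrization of \cref{lem:SB6} and the computation made just before the statement. The key input is that computation: if $O\cdot \tilde{h} g_3 u^\epsilon$ is the black interval created from the white interval $O\cdot \tilde h u^{\epsilon'}$, then its paired V-fraction equals $O\cdot\tilde h\binom{2}{1}$ in both the Type I case ($\epsilon=0$) and the Type II case ($\epsilon=1$). I will first recheck this identity. In the Type I case, $g_2u\binom10=g_2\binom01=\binom21$. In the Type II case, $g_2\binom01=\binom21$ directly.

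For part (b), I will show that the map $\tilde h\mapsto O\tilde h g_3u^{\epsilon(\tilde h)}$ is a bijection from $\widetilde{\mathfrak m}$ onto the set of newly created black intervals. Surjectivity is the content of \cref{lem:SB6}. Injectivity follows from freeness of $\mathfrak m$: distinct words give distinct elementary intervals of the Stern--Brocot subdivision, since the parametrization $O\cdot g$ with $g\in\mathfrak M$ is faithful and $\mathfrak m\cong\mathfrak m_u\cong\mathfrak M$. By construction, each white interval of $\SB_{2\ell}$ produces exactly one newly created black interval and exactly one V-fraction in $\SB_{2\ell+2}$. Hence the V-fractions are in bijection with the newly created black intervals, and so also with $\widetilde{\mathfrak m}$, via $\tilde h\mapsto O\tilde h\binom21$. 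This gives the parametrization of $\mathcal V$. The level statement comes from counting letters. A word $\tilde h$ with $\ell$ letters gives $\tilde h g_3$ with $\ell+1$ letters, which is an interval of $\SB_{2\ell+2}$. Its V-fraction is created in the same step and therefore lies in $\SB_{2\ell+2}$.

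For part (a), injectivity reduces to two facts. First, the pairing between V-fractions and their white ancestor intervals is injective, since each white interval produces one V-fraction. Second, \cref{lem:SB3}(a) says the V-fractions are all distinct, because each rational number is created only once in the Stern--Brocot sequences. Combining these, $O\tilde h\binom21=O\tilde h'\binom21$ forces the two white ancestors to coincide, and hence $\tilde h=\tilde h'$ by the faithfulness noted above.

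The main obstacle is checking that the white ancestor of $O\tilde hg_3u^\epsilon$ is really $O\tilde hu^{\epsilon}$, with the correct orientation twist. I will verify this by tracking the commutation relations \eqref{eq:com}. An odd number of $g_2$'s reverses orientation, and this reversal is exactly what switches Type I to Type II. In particular, the relevant neighbouring interval is $O\tilde hg_2u$ in Type I and $O\tilde hg_2$ in Type II, as used in the computation above.
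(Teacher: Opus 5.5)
Your proposal is correct and follows essentially the same route as the paper. You pair each newly created black interval $O\tilde h g_3u^{\epsilon}$ from \cref{lem:SB6} with its V-fraction, read off from the neighbouring interval ($O\tilde h g_2u$ in Type I, $O\tilde h g_2$ in Type II), giving $O\tilde h\binom{2}{1}$ in both cases, and you get injectivity from the distinctness of V-fractions in \cref{lem:SB3}(a) together with freeness of the monoid; the level statement is correctly read as referring to the V-fractions newly created in $\SB_{2\ell+2}$.
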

\begin{remark}
 Since $g_3 \binom{3}{-1}=g_3 u \binom{-1}{3}=\binom{2}{1}$, 
 Lemma \ref{lem:SB4} is compatible with Corollary \ref{cor:V}(b). 
\end{remark}

\begin{example}
In $\SB_2$ we have $O\cdot 1\binom{2}{1}=\binom{1}{3}$;
in $\SB_4$ we have $O\cdot g_1 \binom{2}{1}=\binom{1}{5}$,
 $O\cdot g_2 \binom{2}{1}=\binom{3}{7}$, $O\cdot g_4 \binom{2}{1}=\binom{5}{7}$.
\end{example}

\subsection{Third characterization of $\mathcal{V}$ and of the set of 
black intervals}\label{ss:2.3}

In subsection \ref{ss:2.2} we described the V-fractions and the black intervals in terms of the \emph{right} multiplication 
of $O$ by elements of ${\rm GL}(2,\Z)$ in the form 
$O\cdot \tilde{h}\binom{2}{1}$ and 
$O\cdot \tilde{h} g_3 u^\epsilon$.
 In this subsection we will transport these results to a \emph{left}  action of a submonoid of ${\rm GL}(2,\Z)$.
 The new generators (matrices) will be compatible with 
 certain surgery coefficients of the 3-manifolds $\Sigma(2,q,r)$ and
 with the corresponding topological language. 
 We define 
\[h_1 \coloneqq \begin{pmatrix} 1 & 0 \\ 2 & 1  \end{pmatrix}, \
h_2 \coloneqq \begin{pmatrix} 0 & 1 \\ 1 & 2  \end{pmatrix}, \
h_3 \coloneqq \begin{pmatrix} 1 & 1 \\ 1 & 2  \end{pmatrix}, \
h_4 \coloneqq \begin{pmatrix} -1 & 2 \\ -2 & 3  \end{pmatrix}.
\]
The left multiplication by $h_1, h_2, h_3, h_4$ 
sends the interval $[0,1]$ to the intervals $[0, \frac{1}{3}]$,
$[\frac{1}{3}, \frac{1}{2}]$, $[\frac{1}{2}, \frac{2}{3}]$,
$[\frac{2}{3}, \frac{1}{1}]$, in such a way that 
$h_1, h_3, h_4$ preserve the orientation of the intervals, while $h_2$ reverses it. Indeed, the left multiplication $h_1\cdot O$ of $O$ by $h_1$ provides the matrix $\mat{0}{1}{1}{3}$ representing the interval $[0, \frac{1}{3}]$, and we have similar identities for the others. 

In particular, if $\mathfrak{m}^*$ denotes the free monoid in 
 ${\rm GL}(2,\Z)$ generated by $h_1, h_2, h_3, h_4$, and 
 $\mathfrak{m}^*_\ell$ those elements which contain $\ell$ letters, 
 then 
 the elements $g\cdot O$ with $g\in\mathfrak{m}^*_\ell$ parametrize the elementary intervals 
of $\SB_{2\ell}$ (some of them with reversed orientation).
E.g., the monomials 
\begin{equation*}
h_1h_1,  h_1h_2,  h_1h_3,  h_1h_4, 
h_2h_4,  h_2h_3,  h_2h_2,  h_2h_1, 
h_3h_1,  h_3h_2,  h_3h_3,  h_3h_4,
h_4h_1,  h_4h_2,  h_4h_3,   h_4h_4
\end{equation*}
parametrize the elementary intervals of $\SB_4$ in the order from left to right (cf. \cref{fig:sb_characterization_two}). 

In fact, we have the following commuting relations, which 
connect the left and right multiplications, and transfer all the results of the previous subsection in the new context of the
$h_i$-multiplications:
\begin{equation}\label{eq:com2}h_i O=O g_i, \quad \text{for  } i=1,2,3,4.\end{equation}
Let $\widetilde{\mathfrak{m}}^*$ be the free submonoid of 
 ${\rm GL}(2,\Z)$ generated by $h_1, h_2, h_4$. Consider the left multiplication of $\widetilde{\mathfrak{m}}^*$ of column vectors (and identify a rational number $\frac{a}{b}$ with the column vector $\binom{a}{b}$). 
 \begin{proposition}[Third characterization of the black intervals and V-fractions]\label{prop:Maction}
\leavevmode
\begin{enumerate}[label=(\alph*)] 
\item The union of $\widetilde{\mathfrak{m}}^*$ orbits of elements $[\frac{1}{2}, \frac{2}{3}]\cap \mathbb{Q}$ equals $\cup\{\mbox{black intervals}\}\cap 
\mathbb{Q}$. 

\item The $\widetilde{\mathfrak{m}}^*$ orbit of $\frac{1}{3}$
 is the set of V-fractions, i.e. $\widetilde{\mathfrak{m}}^*\binom{1}{3}=\mathcal{V}$. 
 The correspondence $\widetilde{\mathfrak{m}}^* \ni \tilde{g}\mapsto \tilde{g}\binom{1}{3}\in \mathcal{V}$ is injective.

\item For any $h_i$ ($i=1,2,4$) and $\frac{a}{b}\in\SB_{2\ell}\cap\mathcal{V}$ 
 one has $h_i \binom{a}{b}\in \SB_{2+2\ell}\cap \mathcal{V}$.
\end{enumerate}
\end{proposition}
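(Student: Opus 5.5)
The whole proof rests on the intertwining relations \eqref{eq:com2}, $h_iO=Og_i$. For any word $\tilde g=h_{i_1}\cdots h_{i_\ell}\in\widetilde{\mathfrak m}^*$, write $\tilde h=g_{i_1}\cdots g_{i_\ell}\in\widetilde{\mathfrak m}$ for the word with the same indices. Iterating the relation gives $\tilde g\,O=O\,\tilde h$, and the assignment $\tilde g\mapsto\tilde h$ is a monoid isomorphism $\widetilde{\mathfrak m}^*\to\widetilde{\mathfrak m}$ between free monoids on three letters. With this dictionary, every statement about left orbits becomes a statement about the right parametrizations of Lemma~\ref{lem:SB6} and Corollary~\ref{cor:V}.

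For (b), note that $\binom{1}{3}=O\binom{2}{1}$. Hence
\[
\tilde g\binom{1}{3}=\tilde g\,O\binom{2}{1}=O\,\tilde h\binom{2}{1}.
\]
By Corollary~\ref{cor:V}(b), these vectors run exactly over $\mathcal V$ as $\tilde h$ runs over $\widetilde{\mathfrak m}$. Injectivity of $\tilde g\mapsto\tilde g\binom13$ follows from Corollary~\ref{cor:V}(a) composed with the bijection $\tilde g\leftrightarrow\tilde h$. We need the vectors to be primitive with positive entries, so that they define fractions in lowest terms. This holds because $O\tilde h$ lies in $\mathrm{GL}(2,\Z)$ with nonnegative entries and $\binom21$ is primitive.

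For (c), let $\frac ab\in\SB_{2\ell}\cap\mathcal V$. By Corollary~\ref{cor:V}(b) it equals $O\tilde h\binom21$ with $\tilde h$ of length $\ell-1$. Then
\[
h_i\binom ab=h_iO\,\tilde h\binom21=O\,g_i\tilde h\binom21,
\]
and $g_i\tilde h$ has length $\ell$. Corollary~\ref{cor:V}(b) therefore places $h_i\binom ab$ in $\SB_{2\ell+2}\cap\mathcal V$.

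For (a), we first transport the black interval $[\frac12,\frac23]$. It is $O g_3=h_3O$, the case $\tilde h=1$ of Lemma~\ref{lem:SB6}. For $\tilde g\in\widetilde{\mathfrak m}^*$ we get $\tilde g\,Og_3=O\tilde h g_3$. By Lemma~\ref{lem:SB6}, $O\tilde hg_3u^\epsilon$ is a newly created black interval, and right multiplication by $u^\epsilon$ only reverses orientation, so it does not change the underlying interval.

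Left multiplication by a matrix with nonnegative entries and determinant $\pm1$ acts on fractions as a monotone Möbius map. It therefore sends the rationals of the interval with columns $\binom12,\binom23$ bijectively onto the rationals of the interval spanned by the image columns. This gives
\[
\tilde g\bigl([\tfrac12,\tfrac23]\cap\mathbb Q\bigr)=\bigl(O\tilde hg_3\text{-interval}\bigr)\cap\mathbb Q.
\]

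The remaining step is to show that the union of the newly created black intervals equals the union of all black intervals. This is the main obstacle, though only a bookkeeping one. Every black interval is either newly created or inherited, i.e.\ a subinterval of a newly created one. Lemma~\ref{lem:SB6} says the subintervals are exactly the $O\,h u^\epsilon$ with $h$ containing $g_3$, and any such interval is contained in the one obtained by truncating $h$ just after its first $g_3$. As $\tilde h$ runs over $\widetilde{\mathfrak m}$, Lemma~\ref{lem:SB6} lists each newly created interval as some $O\tilde hg_3u^\epsilon$. The orbit union is thus exactly the set of rationals in black intervals, which proves (a).
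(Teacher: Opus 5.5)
Your proof is correct and follows the paper's argument: the paper also transports Lemma~\ref{lem:SB6} and Corollary~\ref{cor:V} through $h_iO=Og_i$, via $O\tilde h g_3u^\epsilon=\tilde g\left(\begin{smallmatrix}1&2\\2&3\end{smallmatrix}\right)u^\epsilon$ and $O\tilde h\binom21=\tilde g\binom13$; you supply the details the paper leaves implicit. One small slip: $\tilde g$ itself need not have nonnegative entries (any word containing $h_4$), so justify the bijection on rationals by $\tilde g\bigl(x\binom12+y\binom23\bigr)=O\tilde h g_3\binom xy$, where $O\tilde hg_3$ is unimodular and nonnegative, rather than by positivity of $\tilde g$.
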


\begin{proof} The expression $O\cdot \tilde{h}$ for any 
$\tilde{h}\in \widetilde{\mathfrak{m}}$ transforms by (\ref{eq:com2})
 into an expression $\tilde{g}\cdot O$ for some $\tilde{g}\in 
 \widetilde{\mathfrak{m}}^*$. Then use the identities
 \[O\cdot \tilde{h} g_3 u^\epsilon =\tilde{g}\cdot O\cdot g_3 u^\epsilon=
 \tilde{g}h_3\cdot Ou^\epsilon =\tilde{g}\cdot
 \begin{pmatrix} 1 & 2 \\ 2 & 3  \end{pmatrix}u^\epsilon ; \ \ 
 O\cdot \tilde{h}\binom{2}{1}= \tilde{g}\cdot O \binom {2}{1}=\tilde{g}\binom{1}{3}.
 \]   
\end{proof}

\begin{remark}
  Recall the classical Stern--Brocot tree $\mathcal{T}$ defined at the start of this section. For any vertex $u$ of the tree, let $\mathcal{T}_u$ denote the (full) subtree consisting of $u$ and all its descendants. 

(a)   Assume that $v$ is a newly created V-vector in some $\SB_{2\ell}$ with $\ell>0$. Recall that it is 
  the second vector of $D_{\I}$ in case Type I, and the fourth vector of $D_{\II}$ in case Type II. In both cases let $u_v$ denote the third vector of $D_{\I}$ respectively $D_{\II}$,
  it is a newly created fraction in $\SB_{2\ell-1}$. Then the following fact holds:
  
  If $v=O\cdot \tilde{h}\binom{2}{1}$, then any V-fraction of type
  $O\cdot \tilde{h}\tilde{h}'\binom{2}{1}$ ($\tilde{h}'\in 
  \widetilde{\mathfrak{m}}$) is in $\mathcal{T}_{u_v}$, and conversely,
  any V-fraction from $\mathcal{T}_{u_v}$ is of type 
  $O\cdot \tilde{h}\tilde{h}'\binom{2}{1}$.

  Symmetrically, if $v=\tilde{g}\binom {1}{3}$, then any 
  $\tilde{g}\tilde{g}'\binom {1}{3}$ ($\tilde{g}'\in \widetilde{\mathfrak{m}}^*$) is in $\mathcal{T}_{u_v}$, and conversely,
  any V-fraction from $\mathcal{T}_{u_v}$ is of type $\tilde{g}\tilde{g}'\binom {1}{3}$. (Hence, the V-fractions of type $\tilde{g}'v$ need not be in $\mathcal{T}_{u_v}$!)

(b) Another tree transformation is the following. Fix a vertex $u$ of $\mathcal{T}$ and $g\in\mathfrak{m}^*$.
Then left multiplication by $g$ sends 
$\mathcal{T}_u$ isomorphically to $\mathcal{T}_{gu}$.
(Use that the left multiplication of $\mathfrak{m}^*$ commutes with the  right multiplication by $L$ and $R$, and this second multiplication generates the corresponding subtree.)

Using this one can also reprove the following statements of Theorem \ref{th:int3} from the introduction:
(i) 
the union of $\widetilde{\mathfrak{m}}^*$ orbits of elements $[\frac{1}{2}, \frac{2}{3}]\cap \mathbb{Q}$ equals $\mathcal{M}$;
(ii) 
the  $\widetilde{\mathfrak{m}}^*$ orbit of $\frac{1}{3}$ is $\mathcal{V}$;  (iii) 
furthermore, the orbits (i)-(ii) cover all rational numbers $\frac{q}{r}\in (0,1)$.



\end{remark}

\begin{remark}
  As we already observed, in $\SB_{2\ell}$ ($\ell>0$) there are $3^{\ell-1}$ newly created black intervals $\{I_{2\ell}^i\}_{i=1}^{3^{\ell-1}}$. Write $I_{2\ell}=\cup_i I_{2\ell}^i$ and let
  $\mu(\ell)$ be the length of $I_{2\ell}$. Then $s(\ell) \coloneqq \sum_{\ell'\leq \ell} \mu(\ell')$
  is the sum of lengths of all newly created black elementary intervals in the sequences $\SB_{\leq 2\ell}$.

Consider also the following construction. 
 Let $I^\circ_{2\ell}$ be the interior of $I_{2\ell}$
and write inductively $C_1 \coloneqq [0,1]\setminus I^\circ _{2}$, $C_\ell \coloneqq  C_{\ell-1}\setminus I^\circ _{2\ell}$
for $\ell>1$. Set  $C \coloneqq \cap _{\ell=1}^\infty C_\ell$.
Then   $\mathcal{V}= \{\frac{q}{r}\in C \cap (0, 1) \mid \ q,r \ \mbox{positive odd integers}\}$. In other words, 
$C\cap (0,1)\cap \mathbb{Q}$ consists of $\mathcal{V}$ and the boundaries of the newly created black intervals. 

We conjecture that $\lim_{\ell\to \infty}s(\ell)=1$, that is,
$C$ is a Cantor set (uncountable,
with Lebesgue measure zero,
nowhere dense,
perfect and closed,
totally disconnected).

\end{remark}

\section{Notation regarding lattices and Brieskorn singularities}\label{sec:brieskorn_notation}

\subsection{Notations regarding resolution of singularities}

Let $(X,0)$ be a complex analytic normal surface singularity 
and let $\pi: \widetilde{X}\to X$ be the minimal good resolution of $(X,0)$.
Let $E$ be the exceptional curve, and $E=\cup_{i=1}^s E_i$ its irreducible decomposition. The lattice 
of $\pi$ is $L = H_2(\tilde{X},\Z)$, it is a free $\Z$-module generated by the classes of $\{E_i\}_i$, and it is endowed with 
the negative definite intersection form $I=(E_i,E_j)_{i,j}$. 
We denote by $\Gamma$ the dual resolution graph, with vertices in bijection with $\{E_i\}_i$. 

In this note, \emph{we assume that the link $\Sigma$ of $(X,0)$ is an integral homology sphere}, that is, $\Gamma$ is a tree, all the irreducible exceptional curves are rational, and $I$ is unimodular. 

For any (full) subgraph $\Gamma'$ of $\Gamma$, let $I_{\Gamma'}=(\,,\,)_{\Gamma'}$ be the restriction of the intersection form 
to the sublattice generated by $E_a$'s corresponding to the vertices of $\Gamma'$. Then we write $\det (\Gamma') \coloneqq  
\det ( -I_{\Gamma'})$. It is a positive integer. 

 We also consider the dual basis $\{E^*_i\}_i \in L \cong L^*$ which satisfies for any $i,j$
\[
    (E^*_{i},E_j) = -\delta_{i,j},
\]
where $\delta_{i,j}$ is the Kronecker delta function. Note that we can write
\[
E^{*}_{i} = \sum_{j} - (I^{-1})_{i,j} E_j.
\]
In fact, all the entries $-(I^{-1})_{i,j}$ of $-I^{-1}$ are strictly positive. These entries are conveniently computed as 
\begin{equation}\label{eq:Iinverse}
    -(I^{-1})_{i,j}=\det (\Gamma- \overline{ij}),
\end{equation}
where $\Gamma- \overline{ij}$ is the subgraph obtained from $\Gamma$ by deleting the geodesic path connecting $i$ and $j$ and all adjacent edges. It can also be computed from 
 the splice diagram associated with $\Gamma$ (for splice diagrams see \cite{EisenbudNeumann}); it is the product of all the decorations adjacent to but not on the path $\overline{ij}$.
\subsection{Some topological invariants}\label{ss:top}
The $d$-invariant of the link $\Sigma$ can be viewed as an invariant of the lattice $L$ \cite{Nemethi2005, khan2026, os_floer_plumbed}:
\[
    d = \max \frac{k^2 +s}{4},
\]
where $s$ is the rank of $L$ and the maximum ranges over \emph{characteristic vectors} of $L$, namely vectors $k\in L$ that satisfy 
$
    (l,k) + (l,l) \equiv 0 \pmod{2}
$
for all $l \in L$. Since $k^2\leq 0$ we always have $d\leq s/4$.

Recall that in any case $d\geq 0$ 
and 
$L$ is diagonalizable over $\Z$ if and only if $d = 0$ 
\cite{OzsvathSzaboGraded,Elkies}. From a topological point of view, if $\Sigma $
is the boundary of a  rational ball then $d=0$.

There are a few distinguished characteristic vectors in $L$. The first one is the anti-canonical cycle, $Z_K \coloneqq -c_1(\Omega^2_{\widetilde{X}})\in L$, which can be determined by the adjunction relations $(Z_K,E_i)=E_i^2+2$ for all $i$. Rewritten in the dual basis it is 
\begin{equation}\label{eq:ZK}
Z_K=E+\sum_i (\delta_i-2)E^*_i,
\end{equation}
where $\delta_i$ is the number of adjacent vertices of $i$ in $\Gamma$. 

In fact, the set of characteristic vectors can be written as 
$Z_K+2L$. In this way we can define the second distinguished 
characteristic vector, the \emph{Wu class} of $\Gamma$ 
(or of the plumbed 4-manifold $\widetilde{X}$), as 
$w=\sum_i w_iE_i$, $w\equiv Z_K \ ({\rm mod}\ 2)$, and 
each $w_i\in\{0,1\}$. The \textit{Neumann--Siebenmann invariant} $\bar{\mu}$ is defined as follows \cite{Neumann_invariant}: 
\[
    \bar{\mu} = - w^2 - s.
\]
From definitions $-\bar{\mu}\leq 4d$. 

If $(X,0)$ is a hypersurface singularity
with geometric genus $p_g$ and signature of the Milnor fiber $\sigma $
then $\sigma+8p_g+Z_K^2+s=0$ \cite{laufer1977,Durfee1978}. If additionally the link is an integral homology sphere
(as in our case) then $\sigma \equiv 0 \ ({\rm mod}\ 8)$. Therefore, in such cases, 
$Z_K^2+s \equiv 0\ ({\rm mod} \ 8)$. Hence, 
for any characteristic class $k=Z_K+2l$ $(l\in L)$, one has $k^2+s\equiv 0\ ({\rm mod}\ 8)$ as well. In particular, $d$ is even and $\bar{\mu}$ is a multiple of 8.

There is a special class of lattices, the \textit{even lattices} (sometimes also called \textit{Type II lattices}) for which $d$ can be easily computed. By definition, a lattice is \textit{even} if 
$(x,x)$ is even for every $x\in L$. Note that if a lattice of 
a resolution is even then $0\in L$ is a characteristic vector,
hence $d=s/4$, hence it takes its upper bound. Moreover, for such lattices, $Z_K\in 2L$ and $w=0$. Hence $-\bar{\mu}=s=4d $
also holds. 

\subsection{Brieskorn hypersurface singularities}\label{ss:3.3}

We write $\Gamma=\Gamma(p,q,r)$ for the minimal good resolution graph of the Brieskorn singularity $x^p+y^q+z^r=0$, which is a plumbing graph of the Brieskorn sphere $\Sigma(p,q,r)$.
Let $L=L(p,q,r)$ be the corresponding lattice, and 
$d(p,q,r) = d(\Sigma(p,q,r))$ be its $d$-invariant. 

The graph $\Gamma$ consists of a (decorated) central vertex and 3 strings, which we call $p$-leg, $q$-leg and $r$-leg (see \cref{fig:brieskorn_plumbing_graph}).
The decorations on each leg can be obtained in three ways, which we describe next. 

\begin{figure}
    \centering
    \input{graphics/plumbing/brieskorn_plumbing}
    \caption{Plumbing graph of $\Sigma(p, q, r)$.}
    \label{fig:brieskorn_plumbing_graph}
\end{figure}

In the first way, we consider the Hirzebruch--Jung (HJ) continued fraction 
\begin{equation}\label{eq:HJ}\frac{qr}{p}=[[a_0, a_1,a_2,\dots, a_t]] \coloneqq a_0 - \cfrac{1}{a_1-\cfrac{1}{\ddots - \cfrac{1}{a_t}}},
\end{equation}
where $a_0\geq 1$ and all other entries $a_i\geq 2$. 
Then the $p$-leg consists of $t$ vertices decorated by 
$-a_1, \ldots, -a_t$, with the vertex decorated by $-a_t$ connected to the central vertex by an edge. 
In the same way we obtain a continued fraction for the $q$-leg, denoted by $\{b_i\}_{i\geq 0}$ and for the $r$-leg, denoted by $\{c_i\}_{i\geq 0}$.

Note that the first step of the above continued fraction expansion can be written as 
\begin{equation}\label{eq:omega'}
\frac{qr}{p}=a_0-\frac{\omega'_1}{p},\end{equation}
where $0< \omega'_1<p$ and (from the identity (\ref{eq:omega'})) $qr\equiv -\omega'_1$ 
$({\rm mod}\ p)$. From the expansion of $qr/p$ we read that
$p/\omega'_1=
[[a_1, \ldots, a_t]]$. One defines similarly $\omega'_2$ and $\omega'_3$ for the other legs. 

 We also define integers 
 $\omega_1,\omega_2,\omega_3$ satisfying $0<\omega_1<p,\ 0<\omega_2<q, \ 0<\omega_3<r$ and
\begin{equation*}
    \omega_1 q r \equiv -1 \pmod{p},  \quad
    \omega_2 p r \equiv -1 \pmod{q},  \quad
    \omega_3 p q \equiv -1 \pmod{r}.
\end{equation*}
Then we have $\omega_1\omega'_1\equiv 1 \ ({\rm mod}\ p)$,
 $\omega_2\omega'_2\equiv 1 \ ({\rm mod}\ q)$,
  $\omega_3\omega'_3\equiv 1 \ ({\rm mod}\ r)$. They satisfy the 
  `symmetric' HJ continued fraction expansion
\[\frac{p}{\omega_1}=[[a_t,a_{t-1},\dots, a_1]]. \]
The decoration $b$ on the central vertex is given by
\begin{equation}\label{eq:b}
    b p q r +\omega_1 q r + \omega_2 p r + \omega_3 p q =- 1.
\end{equation}
The collection $(b; (p,\omega_1), (q,\omega_2), (r,\omega_3)) $
usually is called the \emph{set of Seifert invariants} of $\Sigma(p,q,r)$. 

In most of our computations we will use the expansion (\ref{eq:HJ}).
Though the entries $a_0,b_0,c_0$ do not appear on the resolution graph, they will play a key role
in our discussions.

If $\Gamma$ is the resolution graph of $\Sigma (p,q,r)$ then the entry $a_0$ of the above continued fraction expansion has two more interpretations.
First, $a_0=\det(\Gamma-v_{a_1})$
(where $\Gamma-v_{a_1}$ is obtained from $\Gamma$ by deleting the end-vertex $v_{a_1}$ decorated by $-a_1$).
Also, if we attach another vertex with
decoration $-a_0$ 
to the end vertex $v_{a_1}$ of the $p$ leg, then the determinant of this enlarged graph is zero. (Symmetric properties hold for the other legs as well.)

When $q=1$, which arises as an extremal case in later proofs, $\Sigma(p, 1, r)$ is diffeomorphic to $S^3$ and its minimal good resolution graph $\Gamma$ is empty. In this case we instead take $\Gamma$ to be the non-minimal resolution graph as in \cref{fig:brieskorn_plumbing_graph} which, in particular, has $b=-1$ and an empty $q$-leg.

\section{Stably equivalent lattices (via certain matrices of trace 2)}\label{s:Gs}

In what follows we will denote by $\langle -1\rangle$ the lattice of rank 1 generated by $e$ with $e^2=-1$.

\begin{definition}
We say that the negative definite lattices $L$ and $L'$ are \emph{stably 
equivalent} if there exist $N,N'\geq 0$ such that 
$L\oplus \langle -1\rangle ^{\oplus N}= L'\oplus \langle -1\rangle ^{\oplus N'}$.
\end{definition}

\begin{lemma}
    If $L$ and $L'$ are stably equivalent, then their $d$-invariants are equal.
\end{lemma}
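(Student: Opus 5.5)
By induction on $N+N'$, it suffices to show that for a negative definite unimodular lattice $L$ we have $d(L\oplus\langle -1\rangle)=d(L)$. Here $d$ is taken as the lattice invariant of \cref{ss:top}:
\[
d(L)=\max_{k}\frac{k^2+s}{4},
\]
where $s=\operatorname{rank} L$ and the maximum runs over characteristic vectors $k$ of $L$. If $L\oplus\langle-1\rangle^{\oplus N}\cong L'\oplus\langle-1\rangle^{\oplus N'}$, the isomorphism carries characteristic vectors to characteristic vectors and preserves squares and ranks. Hence the two sides have equal $d$, and applying the one-step statement $N$ times on the left and $N'$ times on the right gives $d(L)=d(L')$.

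For the one-step statement, first describe the characteristic vectors of $L\oplus\langle -1\rangle$. Write $e$ for the generator of $\langle-1\rangle$. A vector $k+me$ with $k\in L$ and $m\in\Z$ is characteristic exactly when $(l,k)+(l,l)\equiv 0\pmod 2$ for all $l\in L$ and $(e,me)+(e,e)=-m-1\equiv 0\pmod 2$. This holds because the pairing is orthogonal and $(l+ne,l+ne)=(l,l)-n^2$, where the cross terms vanish and $n^2\equiv n$. So the characteristic vectors are precisely $k+me$ with $k$ characteristic in $L$ and $m$ odd.

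Next compute. For such a vector, $(k+me)^2+(s+1)=k^2-m^2+s+1$. Since $m$ is odd, $m^2\ge 1$, with equality when $m=\pm1$. Therefore
\[
\max_{k,m}\bigl(k^2-m^2+s+1\bigr)=\max_k\bigl(k^2+s\bigr),
\]
and dividing by $4$ gives $d(L\oplus\langle-1\rangle)=d(L)$.

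No step is a real obstacle. The only points to handle with care are the orthogonal decomposition of characteristic vectors and the fact that the maximum is attained, which holds because $k^2\le 0$ and the lattice is negative definite, so only finitely many values are relevant.
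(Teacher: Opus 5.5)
Your proof is correct. The paper states this lemma without proof, treating it as immediate from the lattice formula $d=\max_k (k^2+s)/4$ of \cref{ss:top}. Your computation is exactly the verification that justifies this: the characteristic vectors of $L\oplus\langle -1\rangle$ are $k+me$ with $k$ characteristic and $m$ odd, so the extra summand contributes $-m^2+1\le 0$, with equality at $m=\pm1$.
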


\begin{example}
Let $\Gamma$ be a negative definite graph and $v$ one of its vertices.
Assume that $\Gamma'$ is obtained from $\Gamma$ by blowing up the vertex $v$. Denote the corresponding lattices by $L$ and $L'$ and the newly created generator in $L'$ by $E_{new}$. Then 
$L \oplus \langle -1\rangle\stackrel{\simeq}{\longrightarrow}L'$. Indeed, we identify the base elements associated with vertices naturally in $L$ and $L'$, $E_u\rightsquigarrow E_u'$. Then the map $E_u\mapsto E_u'$ for $u\not=v$, $E_v\mapsto E_v'+E_{new}$ and $e\mapsto E_{new}$
realizes the lattice isomorphism.
\end{example}
\begin{example}\label{ex:2}
Assume that $\Gamma$ is a connected negative definite graph with $\det(\Gamma)=1$. Let $M$ be the associated plumbed 3-manifold. 
Choose an end-vertex $v$ and let $K$ be the corresponding knot in $M$
(it is the generic $S^1$-fiber associated with $v$ in the plumbing construction of $M$).
Then consider the surgery 3-manifold $M' \coloneqq  M_{-1}(K)$. The 
plumbing graph $\Gamma'$ of $M'$ can be obtained from $\Gamma$ as follows (see \cref{example_three_schematic}): attach a new vertex $v_{new}$ with decoration 
$-\Delta-1$ to $v$ by an edge, where $\Delta \coloneqq \det(\Gamma-v)$.

Then the lattices $L$ and $L'$ of $\Gamma$ and $\Gamma'$ satisfy
$L \oplus \langle -1\rangle \stackrel{\simeq}{\longrightarrow}L'$.
(See \cite{Lidman_2018}.) Indeed, identify naturally the base elements $E_u$ with generators 
$E'_u$ of $L'$.
Note that $\{\{E_u'\}_u, E^*_{new}\}$ forms a basis 
in $L'$ since the $E_{new}$-coefficient of $E_{new}^*$ is $-(E_{new}^*, E_{new}^*)=\det(\Gamma)=1$. The lattice isomorphism is realized by 
$E_u\mapsto E'_u$, $e\mapsto E^*_{new}$.
\end{example}

\begin{figure}
    \centering
    \input{graphics/plumbing/example_three}
    \caption{Schematic of the resulting $\Gamma'$ plumbing graph from $\Gamma$ in Example \ref{ex:2}. In the above $\Delta = \det(\Gamma - v)$. The corresponding lattices $L$ and $L'$ are stably equivalent.}
    \label{example_three_schematic}
\end{figure}

Next we analyse Brieskorn spheres of type $\Sigma(2,q,r)$ with $0<q<r$. 
We will consider different operations connecting them. 
In what follows 
we will often use the notation $\mat{a}{b}{c}{d}$ for 
matrices $\begin{inlinematrix}
    a & b\\
    c & d
\end{inlinematrix}\in{\rm GL}(2,\Z)$ whose entries satisfy $a d - b c =\pm 1$. 
These operators will be used as special transformations of the Brieskorn exponents $(q,r)$:
\[
\begin{pmatrix}
    q\\
    r
\end{pmatrix}
\ \mapsto \ 
\begin{pmatrix}
    q'\\
    r'
\end{pmatrix}
 \coloneqq  \Mat{a}{b}{c}{d}
\begin{pmatrix}
    q \\
    r
\end{pmatrix}.
\]

\begin{example}
Consider the matrix $h_1 \coloneqq (1,0;2,1)$. 
Then $q'=q$ and $r'=2q+r$. Consider the HJ continued fractions 
$qr/2=[[a_0,2]]$, $2r/q=[[b_0, b_1,\ldots]]$ and
$2q/r=[[c_0,c_1,\ldots]]$. Then the corresponding continued fractions for $(2,q',r')$ are the following 
$q'r'/2=[[q^2+a_0,2]]$, $2r'/q'=[[4+b_0, b_1,\ldots]]$ and
$2q'/r'=[[1, 1+c_0,c_1,\ldots]]$. Let $v$ be the end-vertex of the $r$-leg in $\Gamma$. 
Then the legs of the graph $\Gamma'$ of $\Sigma(2,q',r')$ are obtained from the legs of the old one $\Gamma$ by gluing a new vertex $v_{new}$ to $v$ with decoration 
 $-(c_0+1)$. Recall that $c_0=\det (\Gamma-v)$.

 Now, let $K$ be the knot associated with $v$ 
 (e.g. the Seifert fiber associated with $v$). 
Then we can consider the surgery Seifert 3-manifold $\Sigma(2,q,r)_{-1}(K)$ as well. By Example \ref{ex:2}
and the above continued fraction argument, the legs of the graph of $\Sigma(2,q,r)_{-1}(K)$ and of $\Gamma'$ agree. Since both of them have determinant 1, the decoration of their central vertex should also agree. Hence 
$\Sigma(2,q', r')=\Sigma(2,q,r)_{-1}(K)$ and 
$L'\simeq L\oplus \langle -1\rangle$ via  Example \ref{ex:2}.
\end{example}

\begin{example}
Consider the matrix $h_2 \coloneqq (0,1;1,2)$. 
Then $q'=r$ and $r'=q+2r$.
This is a `symmetric' case of the previous example, 
 $\Sigma(2,q', r')=\Sigma(2,q,r)_{-1}(K)$, where 
$K$ is the Seifert fiber of the $q$-leg. If $q=1$ then $K$ is understood to be the generic fiber of the central vertex (for which the construction in \cref{ex:2} also holds).
In particular, 
$L'\simeq L\oplus \langle -1\rangle$ again. 
\end{example}

\begin{proposition}\label{ex:5}
Consider the matrix $h_4 \coloneqq \Mat{-1}{2}{-2}{3}$. 
Then $L'$ and $L$ are stably equivalent.
\end{proposition}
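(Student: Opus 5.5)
The plan is to realize the surgery behind $h_4$ as a single enlarged, non-minimal plumbing graph $\Gamma''$ that contains $\Gamma=\Gamma(2,q,r)$ and blows down to $\Gamma'=\Gamma(2,q',r')$, where $\binom{q'}{r'}=h_4\binom{q}{r}=\binom{2r-q}{3r-2q}$. We keep the $2$-leg and the central vertex of $\Gamma$. We extend the $q$-leg and the $r$-leg at their far end-vertices $v_q,v_r$ by new chains $C_q,C_r$, possibly a single vertex each. These play the role of the $(1,3)$-framed link $K_q\cup K_r$ of the introduction, in the same way that the single vertex of weight $-\Delta-1$ does in Example~\ref{ex:2}. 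The chains are chosen with three properties: $\Gamma''$ is negative definite, $\det(\Gamma'')=1$, and the Seifert invariants of $\Gamma''$ are those of $\Sigma(2,q',r')$.

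\textbf{Step 1: choosing the chains.} A leg with continued fraction $q/\omega$ (read from the central vertex outward) has determinant $q$. Attaching a vertex of weight $-x$ at its far end gives a leg of determinant $xq-\mu$, where $\mu$ is the determinant of the leg with its end-vertex removed. Longer chains give analogous recursions. We therefore solve for chains with
\[
\det(\text{new } q\text{-leg})=2r-q,\qquad \det(\text{new } r\text{-leg})=3r-2q .
\]
To do this we express $b_0,c_0$ and the leg determinants through $q,r,\omega_2,\omega_3$, using \eqref{eq:omega'} and the congruences defining $\omega_i$. We allow $-1$-vertices in the chains, since the graph is not required to be minimal. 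I expect the $1$ and $3$ from the $(1,3)$-surgery to show up as the framings of $C_q,C_r$ relative to the Seifert framings, with $b_0,c_0$ entering exactly as $\Delta$ does in Example~\ref{ex:2}.

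\textbf{Step 2: identifying $\Gamma''$ with $\Sigma(2,q',r')$.} The $2$-leg of $\Gamma''$ is the old one, but the central weight must be the one dictated by \eqref{eq:b} for $(2,q',r')$. So we check that the orbifold Euler number of $\Gamma''$ equals $-1/(2q'r')$. By the splice formula and \eqref{eq:Iinverse}, this is equivalent to $\det(\Gamma'')=1$ once the leg multiplicities are $2,q',r'$. The determinant is computed by expanding along the central vertex, as a linear expression in the leg determinants and their truncations. After blowing down all $(-1)$-vertices and repairing the legs, we obtain a minimal star-shaped graph with the same Seifert invariants, hence $\Gamma'$. Blow-downs change the lattice only by a summand $\langle-1\rangle$ each, by the blow-up Example. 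This gives $L''\cong L'\oplus\langle-1\rangle^{k}$.

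\textbf{Step 3: splitting off $\Gamma$.} Since $L\subset L''$ is unimodular, $L''=L\oplus L^{\perp}$. The complement $L^{\perp}$ is unimodular and negative definite, because $\det(\Gamma'')=\det(\Gamma)=1$. When $C_q,C_r$ are single vertices, $L^\perp$ has rank $2$, and a rank-$2$ unimodular negative definite lattice is $\langle-1\rangle^{2}$. Explicitly, as in Example~\ref{ex:2}, the duals $E^*_{new}$ of the new vertices together with $\{E_u\}_{u\in\Gamma}$ form a basis. For longer chains we need $L^\perp$ to be diagonal; this follows if the chain vertices beyond the first are $(-1)$-blowups, or else by a small direct check. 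Then $L\oplus\langle-1\rangle^{N}\cong L''\cong L'\oplus\langle-1\rangle^{k}$, which is the claimed stable equivalence.

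The main obstacle is Step 1/2: finding the right chains and verifying the continued fraction identities uniformly in $(q,r)$. This includes the edge cases where $2r-q$ or $3r-2q$ degenerates, or $q=1$, where the convention of \S\ref{ss:3.3} is used. Negative definiteness of $\Gamma''$ also has to be ensured. If single vertices fail, I would first try a two-vertex chain of shape $(-1,-m)$ on each leg, which mimics the two-step surgery with a non-negatively oriented intermediate manifold.
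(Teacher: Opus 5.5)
\textbf{There is a genuine gap: your Step~1 cannot be carried out.} No negative definite $\Gamma''$ of the form you describe exists, whatever the lengths of the chains $C_q,C_r$.

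First, the legs of $\Gamma''$ are rigid. Blowing down a $(-1)$-vertex only raises the weights of its neighbours. Moreover, $\Gamma'$ has the same central weight as $\Gamma$, which follows from the continued fractions together with unimodularity. Hence no blow-down can reach the centre, and each leg of $\Gamma''$ must have the same continued-fraction value as the corresponding leg of $\Gamma'$. From
\[
2r'/q'=[[3,b_0-1,b_1,\dots]],\qquad 2q'/r'=[[2,1,c_0-2,c_1,\dots]],
\]
this forces the tails to satisfy $[[C_q]]=b_0-1$ and $[[C_r]]=[[c_0-2,1]]=c_0-3\in\{-1,-2\}$.

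The $r$-tail condition is already impossible. A negative definite chain has value $\det(C)/\det(C-x_1)>0$.

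The $q$-tail condition fails as well. Expanding along the edge joining $v_q$ and $x_1$, and using $\det\Gamma=1$ and $\det(\Gamma-v_q)=b_0$, gives
\[
\det(\Gamma\cup C_q)=\det(C_q)-b_0\det(C_q-x_1)=\bigl((b_0-1)-b_0\bigr)\det(C_q-x_1)=-\det(C_q-x_1)<0 .
\]
So the full subgraph $\Gamma\cup C_q$ is not negative definite.

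Heuristically, this reflects the positive framings: the surgery is $(+1,+3)$ with linking matrix of determinant $-1$, so its trace is indefinite. This is why the introduction stresses that the intermediate manifold is not negatively oriented. Your template is exactly right for $h_1,h_2$, where the surgery is $-1$ (Example~\ref{ex:2}), but it does not transfer to $h_4$. Your fallback $(-1,-m)$ chains fail for the same reason, so Steps~1--3 must be replaced, not patched.

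\textbf{The missing idea} is a lattice isomorphism $L\oplus\langle-1\rangle^{k}\cong L'$ that is not induced by a graph inclusion. The paper proceeds as follows.
\begin{enumerate}
\item It computes $\Gamma'$ directly. The $q$-leg gets a vertex of weight $-(b_0-1)$ appended. The end weight $-c_1$ of the $r$-leg becomes $-c_1-1$, and a further $(-2)$-vertex is appended when $c_0=1$.
\item It blows up $\Gamma$ once or twice at the $r$-end, giving $\Gamma^\sharp$ of the same rank as $\Gamma'$.
\item It identifies $\Gamma^\sharp-E$ with $\Gamma'-D'$ and defines $\alpha(E)=D'^*-F'^*$ on the last $(-1)$-vertex $E$.
\item It checks the pairings using $-(D'^*,D'^*)=b_0'=3$, $-(F'^*,F'^*)=2$ and $-(D'^*,F'^*)=c_0'=2$. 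These give $(\alpha E)^2=-3-2+4=-1$ and $(\alpha E,F')=1$, and $\alpha E$ is orthogonal to every other vertex.
\item Surjectivity follows because the $D'$-coefficient of $\alpha E$ is $-(D'^*-F'^*,D'^*)=3-2=1$.
\end{enumerate}
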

\begin{proof}
In this case  $q'=-q+2r$ and $r'=-2q+3r$. Fix the HJ continued fractions 
$qr/2=[[a_0,2]]$, $2r/q=[[b_0, b_1,\ldots]]$ and
$2q/r=[[c_0,c_1,\ldots]]$. Since $2r/q>2$ we have $b_0\geq 3$.
Similarly, $2q/r<2$ hence $1\leq c_0\leq 2$. 
Then the corresponding continued fractions for $(2,q',r')$ are the following: 
$q'r'/2=[[q^2-4qr+3r^2+a_0,2]]$, $2r'/q'=[[b_0', b'_1,\ldots ]]=
[[3,b_0-1, b_1,\ldots]]$ and
$2q'/r'=[[c_0', c'_1,\ldots ]]=[[2, 1, c_0-2 ,c_1,\ldots]]$.
Hence the $q$ leg of $\Gamma$ is modified into the $q'$ leg of $\Gamma'$ as follows:

\begin{tikzpicture}[
  on grid,
  node distance=1.2cm,
  dot/.style={circle,fill,inner sep=0pt,minimum size=4pt},
  label distance=1pt,
  stub/.style={xshift=10pt},
  ldots/.style={right=0.7cm of #1},
]

\node[dot,label=above:$-b_1$]             (a1) {};
\node[dot,right=of a1,label=above:$-b_2$] (a2) {};
\node[ldots=a2]                           (ad) {$\ldots$};
\draw (a1) -- ([stub]a2.center);

\node[right=of ad] (arr) {$\longrightarrow$};

\node[dot,right=of arr,label=above:$-b_0+1$] (b0) {};
\node[dot,right=of b0, label=above:$-b_1$]   (b1) {};
\node[dot,right=of b1, label=above:$-b_2$]   (b2) {};
\node[ldots=b2]                              (bd) {$\ldots$};
\draw (b0) -- ([stub]b2.center);

\end{tikzpicture}

Since $b_0\geq 3$ the decoration $-b_0+1\leq -2 $, hence this is a `correct' negative definite graph decoration.

Regarding the $r$-leg modification, the $r'$ leg of $\Gamma'$ is the following
(the third diagram is obtained from the second one by blow down of the $(-1)$ vertex):

\begin{tikzpicture}[
  on grid,
  node distance=1.2cm,
  dot/.style={circle,fill,inner sep=0pt,minimum size=4pt},
  label distance=1pt,
  stub/.style={xshift=10pt},
  ldots/.style={right=0.7cm of #1},
]

\node[dot,label=above:$-c_1$]             (a1) {};
\node[dot,right=of a1,label=above:$-c_2$] (a2) {};
\node[ldots=a2]                           (ad) {$\ldots$};
\draw (a1) -- ([stub]a2.center);

\node[right=of ad] (arr) {$\longrightarrow$};

\node[dot,right=of arr,label=above:$-1$]     (b0) {};
\node[dot,right=of b0, label=above:$-c_0+2$] (b1) {};
\node[dot,right=of b1, label=above:$-c_1$]   (b2) {};
\node[dot,right=of b2, label=above:$-c_2$]   (b3) {};
\node[ldots=b3]                              (bd) {$\ldots$};
\draw (b0) -- ([stub]b3.center);

\node[right=of bd] (eq) {$\cong$};

\node[dot,right=of eq,label=above:$-c_0+3$] (d0) {};
\node[dot,right=of d0,label=above:$-c_1$]   (d1) {};
\node[dot,right=of d1,label=above:$-c_2$]   (d2) {};
\node[ldots=d2]                             (dd) {$\ldots$};
\draw (d0) -- ([stub]d2.center);

\end{tikzpicture}

Now consider the graph $\Gamma_{+1,+3}$ 
of the 3-manifold obtained from $\Sigma(2,q,r)$ by 
$\mu_q=1$ surgery along the $q$-leg, and $\mu_r=3$ surgery along the $r$-leg. From the above discussion we obtain that the legs of $\Gamma'$ and of $\Gamma_{+1,+3}$ agree.
By \cite[Prop. 5.3.11]{Gompf1999} (which holds for surgeries in homology spheres, as well as surgeries in $S^3$), the lattice of $\Gamma_{+1,+3}$ is unimodular if and only if 
the matrix $(\mu_q,p;p,\mu_r)$ is unimodular. Since this is the case,   both graphs
$\Gamma_{+1,+3}$ and $\Gamma'$ are unimodular, and their legs coincide. Therefore,  their central decorations should agree as well. Hence $\Gamma'$ is obtained from $\Gamma$ by  the 
corresponding leg modifications described above. 
    
Notice also that since 
$c_0\in\{1,2\}$, $-c_0+3$ is a positive number,  we have to modify this leg by (orientation preserving) 
plumbing calculus in order to get the genuine 
negative definite  plumbing graph $\Gamma'$ (see below).

\vspace{2mm}

\noindent \textbf{Case A.} \ $c_0=2$. 

In this case $-c_0+3=+1$, hence we can blow down this $(+1)$
vertex, which transforms the adjacent decoration $-c_1$ to $-c_1-1$. In this way we get a negative definite graph $\Gamma'$;
it is obtained from $\Gamma$ by two modifications: we glue to the end of the $q$ leg a new vertex with decoration $-b_0+1$ and we modify the 
end vertex decoration of the $r$ leg from $-c_1$ into $-c_1-1$. 
Note that  ${\rm rank}(L')={\rm rank}(L)+1$.

Since in $\Gamma$, the end vertex decoration of the $r$ leg was modified, the lattice $L$ does not embed (naturally) into $L'$. Let us replace 
$\Gamma$ (hence $L$ as well) by its blow up of the  end vertex
of its $r$ leg,  and denote it by $\Gamma^\sharp$ (respectively $L^\sharp$). The (schematic) graphs $\Gamma^\sharp$ and $\Gamma'$ are the following:

\begin{tikzpicture}[
  on grid,
  node distance=1.05cm,
  dot/.style={circle,fill,inner sep=0pt,minimum size=4pt},
  label distance=1pt,
]

\node[dot,label=above:$-2$]          (a1) {};
\node[dot,right=of a1]               (a2) {};
\node[right=of a2]                   (ad) {$\ldots$};
\node[below=10pt of a2]                   {$\vdots$};
\node[dot,below=of a2,
      label=right:$-b_1$,label=left:$G$] (G)  {};

\node[dot,right=1.4cm of ad,label=above:$-c_2$]                     (c2) {};
\node[dot,right=1.4cm of c2,label=above:$-c_1-1$,label=below:$F$]   (F)  {};
\node[dot,right=1.4cm of F, label=above:$-1$,    label=below:$E$]   (E)  {};

\draw (a1) -- ([xshift=10pt]a2.center);
\draw (G)  -- ++(0,8pt);
\draw ([xshift=-10pt]c2.center) -- (E);

\node[dot,right=2.5cm of E,label=above:$-2$] (b1) {};
\node[dot,right=of b1]                       (b2) {};
\node[right=of b2]                           (bd) {$\ldots$};
\node[below=10pt of b2]                           {$\vdots$};
\node[dot,below=of b2,
      label=right:$-b_1$,label=left:$G'$]    (G') {};
\node[dot,below=of G',
      label=right:$-b_0+1$,label=left:$D'$]  (D') {};

\node[dot,right=1.4cm of bd,label=above:$-c_2$]                      (c2') {};
\node[dot,right=1.4cm of c2',label=above:$-c_1-1$,label=below:$F'$]  (F')  {};

\draw (b1) -- ([xshift=10pt]b2.center);
\draw (D') -- ([yshift=8pt]G'.center);
\draw ([xshift=-10pt]c2'.center) -- (F');

\end{tikzpicture}

It is helpful to  establish some identities in $L'$:
\begin{enumerate}[label=(\roman*)]
\item $-(F'^*, D'^*)_{L'}$ is 
the determinant of the subgraph obtained from $\Gamma'$ by deleting the path connecting (the vertex) $F'$ and $D'$, hence it is $c_0'=2$. 

\item $-(D'^*, D'^*)_{L'}$ is the first term $b_0'$ in the expansion of
$2r'/q'$, hence it is 3.

\item We claim that $-(F'^*, F'^*)_{L'}=2$. 
\end{enumerate}
To prove this first recall that $2q'/r'=[[2,1,c_0-2, c_1, \ldots]]$, 
and, since $c_0=2$,  it is in fact $[[2,1,0, c_1,\ldots]]$.  
 Then, we know by the interpretation of $c'_0$ that the 
determinant of the graph ($\Gamma'$
before the plumbing calculus enlarged by $-c_0'=-2$, cf. \cref{sec:brieskorn_notation}) whose 
$r'$ leg  ends as \tikz[inlinediag]{
  \node[vertex, label={above:$-c_1$}] (u1) at (0,0)   {};
  \node[vertex, label={above:$0$}]    (u2) at (1.6,0) {};
  \node[vertex, label={above:$-1$}]   (u3) at (3.2,0) {};
  \node[vertex, label={above:$-2$}]   (u4) at (4.8,0) {};
  \draw (u1) -- (u2) -- (u3) -- (u4);
  \draw[dotted] (u1) -- (-1.2,0);
}
has determinant zero. After 0-absorption, we get that the graph 
\tikz[inlinediag]{
  \node[vertex, label={above:$-c_1-1$}] (u1) at (0,0) {};
  \node[vertex, label={above:$-2$}]     (u2) at (2,0) {};
  \draw (u1) -- (u2);
  \draw[dotted] (u1) -- (-1.2,0);
}
has determinant zero as well. This is equivalent (by expanding the determinant) to the statement that the determinant of the matrix obtained from this by deleting the vertices with decoration $-c_1-1$ and $-2$ is 2. But this subgraph is exactly the subgraph obtained from $\Gamma'$
by deleting the vertex corresponding to $F'$, hence its determinant equals $-(F'^*, F'^*)_{L'}$.

\vspace{2mm}

We claim that $L^\sharp\simeq L'$.
Note that $\Gamma^\sharp-\mbox{vertex}(E)$ and $\Gamma'-\mbox{vertex}(D')$ (and their lattices too) can be identified. We construct 
$\alpha :L^\sharp\to  L'$ as follows. The restriction of $\alpha$ to 
$L(\Gamma^\sharp-\mbox{vertex}(E))$ is the identity (by the above identification).
On the other hand, $\alpha(E) \coloneqq  -F'^*+D'^*$.

We need to verify a few identities:
\begin{itemize}
 \item \
$(\alpha(F),\alpha(E))_{L'}=(F', -F'^*+D'^*)_{L'}=1=(F,E)_{L^\sharp},$

\item \
$(\alpha(H),\alpha(E))_{L'}=(H', -F'^*+D'^*)_{L'}=0=(H,E)_{L^\sharp}$ for any base vector in $L^\sharp$ different than $E$, $F$,

\item \
$(\alpha(E),\alpha(E))_{L'}=( -F'^*+D'^*,  -F'^*+D'^*)_{L'}$, which,  via (i)-(ii)-(iii) equals 
$-2-3+4=-1$, hence it equals $(E,E)_{L^\sharp}$. 
\end{itemize}
Finally we have to verify that $L(\Gamma'-{\mbox{vertex}(D')})+ \Z\langle  -F'^*+D'^*
\rangle=L'$. But this follows from the identity $(  -F'^*+D'^*, -D'^*)=
-2+3=1$.

This ends the proof of $L^\sharp\simeq L'$. In particular, 
$L\oplus \langle-1\rangle \simeq L'$ as well.
\vspace{2mm}

\noindent \textbf{Case B.} \ $c_0=1$.  

The proof is very similar to the proof of Case A, we modify slightly those arguments.

In this case $-c_0+3=+2$. Then we have to blow up the edge connecting the vertices decorated by $-c_1$ and $+2$, then blow down the end $(+1)$-vertex. 
In this way we get that the legs of the negative definite graph $\Gamma'$
are obtained from the legs of $\Gamma$ by three modifications: we glue to the end of the $q$ leg\footnote{If $q=1$, we glue to the central vertex instead, since the $q$-leg is empty.} a new vertex with decoration $-b_0+1$ and we modify the 
end vertex decoration of the $r$ leg from $-c_1$ into $-c_1-1$ and we glue to it another  $(-2)$-vertex.  By the same argument as in the Case A we obtain that in fact 
this graph is $\Gamma'$ (i.e. the central decoration is also preserved). 
Note that  ${\rm rank}(L')={\rm rank}(L)+2$. 

Next, we modify 
$\Gamma$ by two blow ups: first we blow up the  end vertex
of its $r$ leg, and then the newly created vertex.  This new graph will be denoted by 
 $\Gamma^{\sharp\sharp}$, and  its lattice by  $L^{\sharp\sharp}$. 
 The (schematic) graphs $\Gamma^{\sharp\sharp}$ and $\Gamma'$ are the following:

\begin{picture}(300,100)(20,0)
 \put(30,70){\circle*{4}}
    \put(60,70){\circle*{4}}
\put(30,70){\line(1,0){40}}
 \put(90,70){\makebox(0,0){$\ldots$}}
        \put(30,80){\makebox(0,0){$-2$}}
       \put(60,60){\makebox(0,0){$\vdots$}}
          \put(60,40){\circle*{4}}
             \put(73,40){\makebox(0,0){$-b_1$}}
      
 \put(130,70){\circle*{4}} \put(250,70){\circle*{4}}
 \put(170,70){\circle*{4}}
 \put(210,70){\circle*{4}}
 \put(120,70){\line(1,0){130}}
  \put(130,80){\makebox(0,0){$-c_2$}} 
  \put(170,80){\makebox(0,0){$-c_1-1$}}  
  \put(210,80){\makebox(0,0){$-2$}}\put(250,80){\makebox(0,0){$-1$}}
\put(60,40){\line(0,1){8}}
 
  \put(210,60){\makebox(0,0){$F$}}
    \put(250,60){\makebox(0,0){$E$}}
      \put(50,40){\makebox(0,0){$G$}}

 \put(280,70){\circle*{4}}
    \put(310,70){\circle*{4}}
\put(280,70){\line(1,0){40}}
 \put(340,70){\makebox(0,0){$\ldots$}}
        \put(280,80){\makebox(0,0){$-2$}}
       \put(310,60){\makebox(0,0){$\vdots$}}
          \put(310,40){\circle*{4}}
             \put(323,40){\makebox(0,0){$-b_1$}}
      
 \put(380,70){\circle*{4}} \put(460,70){\circle*{4}}
 \put(420,70){\circle*{4}}
\put(310,10){\circle*{4}}
 \put(370,70){\line(1,0){90}} \put(310,10){\line(0,1){38}}
  \put(380,80){\makebox(0,0){$-c_2$}}   \put(460,80){\makebox(0,0){$-2$}} 
  \put(420,80){\makebox(0,0){$-c_1-1$}}  
 \put(333,10){\makebox(0,0){$-b_0+1$}}
 
  \put(460,60){\makebox(0,0){$F'$}}
    \put(300,10){\makebox(0,0){$D'$}}
      \put(300,40){\makebox(0,0){$G'$}}
  
\end{picture}

The following identities in $L'$  hold by similar arguments: \

(i) $-(F'^*, D'^*)_{L'}=2$, 
%
(ii) $-(D'^*, D'^*)_{L'}=3$, 
%
(iii)  $-(F'^*, F'^*)_{L'}=2$. 
%
%


We claim that $L^{\sharp\sharp}\simeq L'$.
Similarly as above 
we identify $\Gamma^{\sharp\sharp}-\mbox{vertex}(E)$ and $\Gamma'-\mbox{vertex}(D')$ and  we construct 
$\alpha :L^{\sharp\sharp} \to  L'$ as follows. The restriction of $\alpha$ to 
$L(\Gamma^{\sharp\sharp}-\mbox{vertex}(E))$ is the identity  and  $\alpha(E) \coloneqq  -F'^*+D'^*$.
All the remaining verifications are analogous to the previous case. Therefore, 
$L\oplus \langle-1\rangle^{\oplus 2} \simeq L'$ holds as well.
\end{proof}

\section{Equivalent lattices (via a finite order matrix)}\label{sec:equivalent_lattices}

In this section we consider the general case of Brieskorn hypersurface singularities $x^p+y^q+z^r=0$ with $p<q<r$ such that the link $\Sigma(p,q,r)$ is an integral homology sphere,
equivalently, the integers $p,q,r$ are pairwise relative prime. 

We will fix the integer $p$ and we will consider a linear operator $\binom{q}{r}\mapsto \binom{q'}{r'}$ and we will show that the lattices associated with $(p,q,r) $ and $(p,q',r')$ are isomorphic.  

\subsection{The reflection transformation $S_p$.}\label{ss:Ep}

For any fixed $p\geq 1$ consider the matrix $S_p\in {\rm GL}(2,\Z)$
\[S_p \coloneqq  \begin{pmatrix} -2p & 2p-1 \\ -2p-1 & 2p
\end{pmatrix}.\]
One can check that $S_p^2$ is the identity matrix, $\det(S_p)=-1$ and the characteristic polynomial is $t^2-1$. 
We will analyse the (left) action of $S_p$, $\binom{q}{r}\mapsto \binom{q'}{r'} \coloneqq S_p\binom{q}{r}$ on the interval
$\frac{p-1}{p}\leq \frac{q}{r}\leq \frac{p}{p+1}$.
The operator $S_p$  preserves the interval $[\frac{p-1}{p}, \frac{p}{p+1}]$ (but reverses the orientation), it
interchanges the two boundary numbers $\frac{p-1}{p}$ and $\frac{p}{p+1}$, and keeps fixed 
their Stern--Brocot mediant $\frac{2p-1}{2p+1}$.

\begin{proposition}\label{prop:EP}
The lattices associated with $(p,q,r) $ and $(p,q',r')$ are isomorphic.  
\end{proposition}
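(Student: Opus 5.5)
The plan is to reduce the proposition to an explicit comparison of plumbing graphs, in the spirit of Proposition~\ref{ex:5}, and then to write down a lattice isomorphism by hand. No $\langle -1\rangle$ summands may be added this time. A computation for $p=2$ shows that one cannot hope for the two graphs to coincide. Take $(q,r)=(5,9)$; then $S_2\binom{5}{9}=\binom{7}{11}$. Here $\Gamma(2,5,9)$ has legs $(2),(3,2),(2^8)$, while $\Gamma(2,7,11)$ has legs $(2),(2^6),(2,2,3,2)$. Both have rank $12$, but the graphs differ. So the statement is genuinely about lattices, and the isomorphism will have to mix vertices of different legs.

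\emph{Step 1: the legs of $\Gamma'$ in terms of those of $\Gamma$.} Since $q'\equiv -r$ and $r'\equiv -q \pmod p$, we have $q'r'\equiv qr \pmod p$. Hence the $p$-legs of $\Gamma$ and $\Gamma'$ coincide.

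For the other two legs I would use the interval condition $\frac{p-1}{p}\le \frac qr\le\frac{p}{p+1}$. It pins down the leading HJ coefficients:
\begin{itemize}
\item $pq/r$ lies in $[p-1,\,p^2/(p+1)]$, so $c_0=p$ away from the endpoint;
\item $pr/q$ lies in $[p^2/(p-1)\cdot\frac{p-1}{p}\cdot\frac{p+1}{p},\,p^2/(p-1)]$, i.e.\ in $[p+1,\,p^2/(p-1)]$, so $b_0$ takes a bounded, explicitly known value.
\end{itemize}
I would then compute $pr'/q'$ and $pq'/r'$ as HJ continued fractions. They should be obtained from $pr/q$ and $pq/r$ by cutting off and regluing a short initial segment, as in the proof of Proposition~\ref{ex:5}. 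The fixed point $\frac{2p-1}{2p+1}$ and the swap of the endpoints suggest that the long $(-2)$-chains on one leg of $\Gamma$ get traded for the chain on the other leg of $\Gamma'$, via Riemenschneider duality. Rank equality forces this trade to be balanced.

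\emph{Step 2: a common intermediate graph.} I would realize $\Sigma(p,q',r')$ as a surgery on the link $K_q\cup K_r$ of two singular fibers in $\Sigma(p,q,r)$. The surgery is encoded by $S_p$, which has trace $0$ and determinant $-1$. Its plumbing graph is obtained by attaching new vertices to the ends of the $q$- and $r$-legs. By \cite[Prop.~5.3.11]{Gompf1999}, applied exactly as in Proposition~\ref{ex:5}, this graph is unimodular. Its legs agree with those of $\Gamma'$ after plumbing calculus, so the central decorations agree as well. The resulting graph has non-negative decorations, and I would turn it into $\Gamma'$ by blow-ups and blow-downs. I would choose these moves so that the blow-ups and blow-downs balance exactly, as the rank count in Step~1 requires.

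\emph{Step 3: the explicit isomorphism (main obstacle).} I would identify the large common subgraph of $\Gamma$ and $\Gamma'$: the $p$-leg, the central vertex, and the common parts of the $q$- and $r$-legs. On it the map is the identity. On the remaining generators I would define the map using dual vectors, as in Case~A of Proposition~\ref{ex:5}: each remaining generator is sent to a combination such as $\pm F'^*\pm D'^*$, or to a chain of such combinations along a $(-2)$-string. The values $(F'^*,F'^*)$, $(F'^*,D'^*)$, $(D'^*,D'^*)$ are computed via \eqref{eq:Iinverse} and the interpretation of $b_0',c_0'$ as determinants. There are three things to check:
\begin{enumerate}[label=(\alph*)]
\item the intersection numbers are preserved;
\item the image generates $L'$, which follows from a pairing equal to $\pm1$;
\item the construction is uniform in the length of the $(-2)$-chains.
\end{enumerate}
The hardest part is (c): it requires an induction on the continued fraction of $q/r$ inside the interval, presumably treating the two halves $[\frac{p-1}{p},\frac{2p-1}{2p+1}]$ and $[\frac{2p-1}{2p+1},\frac{p}{p+1}]$ separately, since $S_p$ swaps them. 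Because $S_p^2=1$, it suffices to treat one half. I would first work out $p=2$ and small $(q,r)$ to guess the general formula for the images.
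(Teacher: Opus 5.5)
There is a genuine gap. The proof has to happen in your Step~3, but there you give no construction, only the intention to guess one from small cases. The plan also misjudges what has to be built.

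The missing ingredient is the continued-fraction computation you postpone in Step~1. Put $X=\frac{(2p-1)r-2pq}{(3p-2)r-3pq}$. For $p\ge 2$ and $\frac{p-1}{p}<\frac qr<\frac{p}{p+1}$ one has $X>1$, $\frac{pq}{r}=p-1/(2-1/(2-1/X))$ and $\frac{pr'}{q'}=p+2-1/X$. Hence
\[
\frac{pq}{r}=[[p,2,2,c_3,c_4,\ldots]],\quad \frac{pr'}{q'}=[[p+2,c_3,c_4,\ldots]],\quad \frac{pr}{q}=[[p+2,b_1,b_2,\ldots]],\quad \frac{pq'}{r'}=[[p,2,2,b_1,b_2,\ldots]].
\]
So $\Gamma'$ is obtained from $\Gamma$ by detaching the two terminal $(-2)$-vertices of the $r$-leg and gluing them to the end of the $q$-leg. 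Your mod $p$ argument for the $p$-leg is fine, and the central weight is then forced by unimodularity. Your own example shows exactly this: $(2^8)\mapsto(2^6)$ and $(3,2)\mapsto(2,2,3,2)$.

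Nothing is traded via Riemenschneider duality. The modification has bounded size, independent of the chain lengths, and is uniform on the whole interval. So neither the induction in (c) nor the reduction to one half via $S_p^2=1$ is needed. Step~2 is superfluous too: it can at most identify $\Gamma'$. Balancing blow-ups against blow-downs on a graph with non-negative weights gives no relation between $L(\Gamma)$ and $L(\Gamma')$.

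With this identification, the isomorphism is a single change of basis in $L=L(\Gamma)$. Let $E_j$ be the end vertex of the $r$-leg, $E_i$ its $(-2)$-neighbour, and $E_n$ the end vertex of the $q$-leg. Set $F=E_i^*-E_j^*-E_n^*$.

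Evaluate the needed entries of $-I^{-1}$ by \eqref{eq:Iinverse}, using the splice weights:
\begin{enumerate}
\item[(1)] $y=\det(\Gamma-\{E_i,E_j\})$ and $2$ at $E_i$;
\item[(2)] $z=\det(\Gamma-E_j)=c_0=p$;
\item[(3)] $x=\det(\Gamma-E_n)=b_0=p+2$.
\end{enumerate}
Expanding $\det\Gamma=1$ along $E_j$ gives $2z-y=1$, so $y=2p-1$. Then:
\begin{enumerate}
\item[(a)] the $E_i$-coefficient of $F$ is $-(F,E_i^*)=2y-y-2p=-1$, so $F$ may replace $E_i$ in the basis;
\item[(b)] $F$ is orthogonal to every $E_m$ with $m\ne i,j,n$, and $(F,E_j)=(F,E_n)=1$;
\item[(c)] $-(F,F)=z+x-2p=2$.
\end{enumerate}
In the basis $F,E_j,E_n,\{E_m\}_{m\ne i,j,n}$ the form is precisely the plumbing form of $\Gamma'$, with $F$ and $E_j$ as the two relocated $(-2)$-vertices.

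Your instinct to use dual vectors points the right way. But without this identification and this specific vector, the proposal does not yet prove the statement.
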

\begin{proof}
We fix $(p,q,r)$ with 
$\frac{p-1}{p} <\frac{q}{r} < \frac{p}{p+1}$ and set
    $\binom{q'}{r'} \coloneqq S_p\binom{q}{r}$.

\vspace{2mm}

\noindent \textbf{Step 1.} \ A computation shows that 
 \[
    \frac{pq}{r} = p - \cfrac{1}{2-\cfrac{1}{2-\cfrac{1}{\frac{(2p-1) r-2pq}{(3p-2) r-3p q}}}}
\ \ \mbox{and} \ \ 
  \frac{pr'}{q'} = p +2 - \cfrac{1}
  {\frac{(2p-1) r-2pq}{(3p-2) r-3p q}}.
    \]
Therefore, the continued fractions of $\frac{pq}{r}$ and 
$\frac{pr'}{q'}$ have the following form
\[\frac{pq}{r}= [[p, 2,2,c_3, c_4, \ldots ]] \ \ \mbox{and} \ \ 
\frac{pr'}{q'}= [[p+ 2,c_3, c_4, \ldots ]].\]
Symmetrically, 
\[\frac{pr}{q}= [[p+ 2,b_1, b_2, \ldots ]] \ \ \mbox{and} \ \ 
\frac{pq'}{r'}= [[p, 2,2,b_1,b_2, \ldots ]].\]
Finally, note that 
\begin{equation}\label{eq:A}\frac{q'r'}{p}=A+\frac{qr}{p}, \ \ \mbox{where} \ \ A=4p(r-q)^2-2(r^2-q^2),\end{equation}
hence the $p$-legs of the two graphs are the same.

\vspace{2mm}

\noindent \textbf{Step 2.} \
The following schematic graph is the minimal good plumbing graph $\Gamma$ of $\Sigma(p,q,r)$;   
 we mark three vertices $i,j,n$, where 
 $E_i,E_j,E_n$ will denote the corresponding generators in the lattice. The two $(-2)$-vertices sit at the end of the $r$-leg. 
    \begin{center} 
    \input{graphics/plumbing/plumbing_before.tex}
    \end{center}
  We consider also the corresponding (maximal) splice diagram
    \begin{center}
    \input{graphics/plumbing/splice_max.tex}
    \end{center}

    We consider the vector 
    \[F = E^*_i - E^*_j - E^*_n \in L.
    \]
    We claim that the $E_i$-coefficient of $F$ is $-1$. Indeed, this coefficient is $-(F,E_i^*)=-( E^*_i - E^*_j - E^*_n, E_i^*)= -(I^{-1})_{ii}+(I^{-1})_{ji}+(I^{-1})_{ni}$,
     which by splice diagram computation   
     equals $2y-y-2p=y-2p$. By the interpretation of the first coefficient 
     $\lceil \frac{pq}{r}\rceil$ of the continued fraction of 
     $\frac{pq}{r}$ we get that $p=z$. Moreover, since the determinant of $\Gamma$ is 1, expanding the line of $E_j$ we get
    $2z-y=1$. Hence $y=2p-1$. Therefore,  $-(F,E_i^*)=y-2p=-1$.

    This shows that the collection of elements 
    $F, E_j,E_n, \{E_m\}_{m\not=i,j,n}$
    forms basis of $L$. 
    The vector $F$  vector is orthogonal to all $E_m$ with $m$ different from $i,j,n$. It has intersection number 1 with $E_j$, as well as with $E_n$. This means that in the basis 
    $F, E_j,E_n, \{E_m\}_{m\not=i,j,n}$ the intersection 
    matrix can be represented by the following graph,
    once we prove that $(F,F)=-2$ (the self-intersections of the unmarked vertices are evidently preserved).
    \begin{center}

\input{graphics/plumbing/plumbing_after.tex}
    \end{center}
    The verification of $(F,F) = -2$ uses again a splice computation 
    \begin{align*}
   - (E^*_i - E^*_j - &E^*_n,E^*_i - E^*_j - E^*_n) \\ &= 
   -\big((I^{-1})_{ii}+ (I^{-1})_{jj} + (I^{-1})_{nn} 
    - 2 (I^{-1})_{ij} - 2 (I^{-1})_{in} + 2 (I^{-1})_{jn} \big)\\
    &= 2y +z +x - 2y - 4p + 2p= z+x-2p.
    \end{align*}
   But $z=p$ and by similar argument, $x=\lceil \frac{pr}{q}\rceil =p+2$, hence 
$(F,F)=-2$. 

\vspace{2mm}

\noindent \textbf{Step 3.} \
Now, we compare this second graph with the graph of $\Sigma(p,q',r')$.
By Step 1  we obtain that their decorated legs can be identified. Moreover, both have determinant 1, hence the self-intersections of the central vertices should agree as well. 
Hence the two graphs (and their lattices  too) are the same. 
(That is, the minimal good 
plumbing graph of 
$\Sigma(p,q',r')$ is obtained from the graph of $\Sigma(p,q,r)$ by removing two $(-2)$-vertices from the end of the $r$-leg and attaching them to the $q$-leg.)
Furthermore, their lattices are isomorphic. 
\end{proof}

\begin{remark} There are several major conceptual differences between the operators $\{h_i\}_{i=1,2,4}$ of \cref{s:Gs} and the operator $S_p$.
The most apparent one is that $h_1,h_2,h_4$ generate a free monoid in ${\rm GL}(2,\Z)$ and $S_p$ has order two. Another is topological: the operators $h_i$ usually do not preserve the $\bar{\mu}$ invariant. However, $S_p$ does. 
\end{remark}

The next proposition is an addendum of Proposition \ref{prop:EP}.

\begin{proposition}\label{prop:mubar}
The Neumann--Siebenmann invariant is preserved under the action of $S_p$: 
\begin{equation}
    \bar{\mu}(p,q,r) = \bar{\mu}(p,q',r').
\end{equation}
\end{proposition}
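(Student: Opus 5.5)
The plan is to use the explicit lattice isomorphism built in the proof of Proposition \ref{prop:EP} and check that it carries the Wu class of $\Gamma=\Gamma(p,q,r)$ to the Wu class of $\Gamma'=\Gamma(p,q',r')$.
Since $\bar\mu=-w^2-s$ and both graphs have the same rank $s$ (two $(-2)$-vertices are moved, none added), it suffices to show $w^2=w'^2$.
Recall that $w$ is characterized intrinsically: it is the unique characteristic vector $\sum_i w_iE_i$ with all $w_i\in\{0,1\}$ in the vertex basis.
The subtlety is that this condition depends on the basis, not only on the lattice, because the isomorphism of Proposition \ref{prop:EP} does not send vertex basis to vertex basis.
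It sends the basis $F=E_i^*-E_j^*-E_n^*,\,E_j,\,E_n,\,\{E_m\}_{m\neq i,j,n}$ of $L$ to the vertex basis of $L'$.

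First I will transport the Wu set. In any plumbing lattice (a tree), $w$ is determined by the condition $(w,E_v)\equiv E_v^2 \pmod 2$ for all vertices $v$. Its support is a set $S$ of vertices with no two adjacent and with $\#\{u\in S: u\sim v\}\equiv e_v \pmod 2$ for $v\notin S$, where $e_v$ denotes the decoration of $v$. So I will describe $w'$ as a combinatorial subset $S'$ of $\Gamma'$ and pull it back via $\alpha$ to $\alpha^{-1}(w')=\sum_{v\in S'}\alpha^{-1}(E'_v)$.
This pullback is a characteristic vector of $L$. Hence it differs from $w$ by an element of $2L$, and its square is $w'^2$.

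The task then reduces to a local comparison near the end of the legs. The two graphs agree outside the $q$- and $r$-leg ends, where $\Gamma$ has the two $(-2)$-vertices $E_j,E_n$ (notation of Step~2) at the end of the $r$-leg and $\Gamma'$ has them at the end of the $q$-leg.
For a $(-2)$-chain attached to a vertex, the mod 2 conditions force a unique pattern of $w$ on the chain, determined by whether the attaching vertex lies in $S$ and by the parity count. I will tabulate the finitely many cases: for each of the two legs, whether the adjacent vertex is in $S$ or not.
In each case I will write down $S$ and $S'$ explicitly. Since all other decorations agree, $S\setminus\{i,j,n\}=S'\setminus\{\text{moved vertices}\}$ should hold, by propagating the parity conditions inward from the leg ends through identical leg data.

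The main computation is then $w^2-w'^2$. I would write $w=w_0+\varepsilon_iE_i+\varepsilon_jE_j+\varepsilon_nE_n$ with $w_0$ supported on common vertices. Expanding, only the terms involving the moved $(-2)$-vertices and their neighbours contribute, so $w^2-w'^2$ becomes a small finite sum of intersection numbers that vanishes case by case.
An alternative, cleaner route I would try first is to check directly that $\alpha(w)$ has $\{0,1\}$ coefficients in the vertex basis of $\Gamma'$. Expressing $w$ in the basis $F,E_j,E_n,\dots$ requires writing $E_i$ in that basis, namely $E_i=-F+(\text{combination of }E_j,E_n,E_m)$, obtained from the relation defining $F$ and $(F,E_i^*)=-1$.
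If $\varepsilon_i=0$, then $w$ is already a $0/1$ combination of the new basis and $\alpha(w)=w'$ immediately. If $\varepsilon_i=1$, I would need $\alpha(w)-w'\in 2L'$ together with a square comparison.

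The main obstacle I expect is precisely this $\varepsilon_i=1$ case. There the expression of $E_i$ in the new basis involves $E_j^*,E_n^*$ contributions from the whole graph, so the parity bookkeeping is not purely local. I would handle it by computing $(w,w)$ via the formula $w^2=\sum_{v\in S}e_v+2\#\{\text{edges in }S\}$. Since $S$ is independent, the edge term vanishes and $w^2=\sum_{v\in S}e_v$.
This reduces the claim to showing that the decorations summed over $S$ and over $S'$ agree. That should follow because the moved vertices all have decoration $-2$ and the local case analysis shows $|S\cap\{\text{moved}\}|=|S'\cap\{\text{moved}\}|$, while $S$ and $S'$ coincide elsewhere.
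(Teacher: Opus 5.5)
\textbf{Verdict: genuine gap, with a one-line fix that your own alternative route already contains.}

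The gap is in your main route, at the point where you say the comparison ``should follow''. First, a notational point. In Step~2 the two $(-2)$-vertices at the end of the $r$-leg are $E_i,E_j$, with $E_j$ the leaf, while $E_n$ is the end of the $q$-leg. What moves is $E_i,E_j$, and $E_i$ is replaced by $F$.

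Let $E_k$ be the $r$-leg neighbour of $E_i$. The local mod~2 conditions give $w_i=0$ and $w_j=w_k$ in $\Gamma$, and $w'_F=0$ and $w'_j=w'_n$ in $\Gamma'$. Grant that $w$ and $w'$ agree on the common subgraph. This is true: after $w_i=w'_F=0$, both restrict to solutions of the same mod~2 system on $\Gamma-\{E_i,E_j\}$, and that system has a unique solution because the one on $\Gamma$ does. You then get
\[ w^2-w'^2=-2(w_k-w_n). \]
So your claim $|S\cap\{\text{moved}\}|=|S'\cap\{\text{moved}\}|$ is exactly $w_k=w_n$. This is an equality between Wu coefficients on two different legs, and no local case table can produce it. In your tabulation, the cases with $w_k\neq w_n$ would simply give $w^2\neq w'^2$.

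This equality is the real content of the paper's proof. The paper computes the leg-end coefficients of $Z_K$ (for instance $-(Z_K,E_j^*)=(p-1)(q-1)-c_0$). It uses $c_0=p$ and $b_0=p+2$ from Step~1 to get all four end coefficients $\equiv p \pmod 2$. Hence $w_k\equiv w_n\equiv p$.

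The fix is already in your proposal. The case $\varepsilon_i=1$, which you single out as the main obstacle, never occurs. $E_j$ is a $(-2)$-leaf whose only neighbour is $E_i$, so $(w,E_j)=-2w_j+w_i\equiv E_j^2\equiv 0$ forces $w_i=0$.

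With that, your ``cleaner route'' is a complete proof:
\begin{enumerate}
\item $w$ is a $\{0,1\}$-combination of the $E_v$ with $v\neq i$, and all of these belong to the basis $F,E_j,E_n,\{E_m\}$.
\item Hence $\alpha(w)$ has $\{0,1\}$-coordinates in the vertex basis of $\Gamma'$.
\item $\alpha(w)$ is characteristic, because $\alpha$ is an isometry.
\item The Wu class of the unimodular lattice $L'$ is the unique characteristic vector with $\{0,1\}$-coordinates, so $\alpha(w)=w'$ and $w^2=w'^2$.
\end{enumerate}

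Compared with the paper, this route avoids the $Z_K$ parity computation entirely. It obtains $w_k=w_n$ as a by-product rather than needing it as an input, and it uses only the explicit basis change of Proposition~\ref{prop:EP} plus uniqueness of the Wu class.

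Your local route can also be closed another way. You already observed that $\alpha^{-1}(w')-w\in 2L$, which gives $w^2\equiv w'^2 \pmod 8$. Together with $|w^2-w'^2|\le 2$ from the display above, this forces $w^2=w'^2$.
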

\begin{proof} Consider the notations of the proof of Proposition \ref{prop:EP}. 
We compare the Wu classes of $\Gamma$ and $\Gamma'$. 
In the case of the Wu class of $\Gamma$, from $(w,E_j)_\Gamma\equiv 0 \ ({\rm mod} \ 2)$ we obtain 
 $w_i=0$. Symmetrically, the same vanishing holds for the component of the Wu class of $\Gamma'$ associated to the vertex $F$.

Next, we compare the component associated with the end-vertices of the Wu classes of $\Gamma$ and $\Gamma'$. We provide the details for the end $E_j$ of $\Gamma$. 

 First, recall that $w(\Gamma)\equiv Z_K(\Gamma) \ ({\rm mod} \ 2)$, hence we will focus on the 
 $E_j$-coefficient of $Z_K=Z_K(\Gamma)$. Using (\ref{eq:ZK}) we have 
 $-(Z_K, E^*_j)_{\Gamma}=-(E+\sum _m (\delta_m -2) E^*_m, E^*_j)= 1+pq-p-q-c_0 =
 (p-1)(q-1)-\lceil \frac{pq}{r}\rceil$. Since $(p-1)(q-1)$ is even, from the continued fraction expansions from Step 1 of the proof of Proposition \ref{prop:EP}
 we get $w_j (\Gamma)\equiv p \ ({\rm mod}\ 2)$. For the $q$ leg of $\Gamma$ and the 
 $q'$ and $r'$ legs of $\Gamma'$ the computations are the same. 
 From Step 1 we get that all of them are congruent mod 2 by $p$. 
 Moreover, if in $\Gamma$ we write  $(w(\Gamma), E_i)_\Gamma\equiv  w_j+w_k\equiv 0 \ ({\rm mod}\ 2)$, then we get that the $w_k\equiv p\ ({\rm mod} \ 2)$ too. 
 Similar congruence holds for the $E_n$-coefficient of $w(\Gamma')$.
This shows that when we cut the two $(-2)$-vertices from $\Gamma$ and we glue them to 
$\Gamma'$, we will not effect the  restrictions of the Wu classes on 
 $\Gamma- {\rm vertex}(E_i,E_j)$ and $\Gamma'- {\rm vertex}(F,E_j)$, hence they can be identified.    
\end{proof}
\begin{remark} We checked on many examples that $S_p$ also preserves the associated monotone graded subroots \cite{Dai_Manolescu_2019}. On the other hand,  there are several other topological invariants which are not preserved by the operator $S_p$. As an example, consider the following computation of the Casson invariant for $\Sigma(p, q, r)$ and $\Sigma(p', q', r')$. 

For an integral homology sphere which is a plumbed 3-manifold $\Sigma(\Gamma)$ and represented by a graph $\Gamma$, the Casson invariant $\lambda(\Sigma)$ can be computed as (c.f. \cite[Section 5]{Nemethi_2002_SW}): 
    \begin{equation}\label{eq:casson}
    -24 \lambda(\Sigma) = \sum_i ({E_i^2} +3) + \sum_i (2-\delta_i)  (I^{-1})_{ii}.
\end{equation}
In our case, the expressions $ \sum_i ({E_i^2} +3)$ for  $\Gamma$ and $\Gamma'$ agree. On the other hand, $\sum_i (2-\delta_i)  (I^{-1})_{ii}$ 
for  $\Gamma$ is $pqr-a_0-b_0-c_0=pqr-\lceil \frac{qr}{p}\rceil- \lceil \frac{pr}{q}\rceil
-\lceil \frac{pq}{r}\rceil= pqr-(p+p+2+a_0)$ (use Step 1 from the proof of Proposition \ref{prop:EP}). Similarly, the same expression for
$\Gamma'$ is $pq'r'-(p+p+2+A+a_0)$. Hence $-24 \lambda (\Sigma')+24 \lambda(\Sigma)=
pq'r'-pqr-A=(p^2-1)A$, where $A$ is given in (\ref{eq:A}).


Recall also that for integral homology Brieskorn singularities, $8\lambda$ equals the 
signature of the Milnor fiber. Further computations show that 
the operator $S_p $ does not preserve the 
Milnor number, the signature, the geometric genus, or $Z_K^2+s$. 
\end{remark}

\begin{remark}
    The transformation $S_2$ was used by Yamada and Matsumoto to prove that if the minimal resolution graph $\Sigma(2,q,r)$ is even, then $1/2<q/r<2/3$. See \cite[Theorem 1.2]{Yamada_Matsumoto_1983}.
\end{remark}

\subsection{Surgery coefficients}\label{ss:SURG}
Though in the previous discussions we used the surgery notations too, for the convenience of the reader we 
make explicit in the case of $\Sigma(p,q,r)$ the correspondence between the entries of 
the $2\times 2$ matrix $T$ with $T\binom{q}{r}=\binom{q'}{r'}$ and the surgery coefficients.

Recall that we perform surgeries along 
the singular Seifert fibers $K_q$ and $K_r$
associated with the $q$ and $r$ legs of $\Sigma(p,q,r)$.
The output surgery manifold 
 $\Sigma(p,q,r)_{s_q/t_q,s_r/t_r}(K_q,K_r)$ is obtained 
 by $s_q/t_q $ surgery along $K_q$ and 
 $s_r/t_r $ surgery along $K_r$.
Since we use only negative definite plumbing graphs, we prefer to 
rewrite the surgery coefficients as :
$s_q/t_q=-x_q/y_q$, $s_r/t_r=-x_r/y_r$.

As usual, we allow also $\infty$ surgery too, e.g., if
$-x_q/y_q=\infty$ then the surgery along $K_q$ is trivial, and 
$q'=q$ (similarly for the $r$ leg).
Moreover, if the coefficients are not $\infty$ then we assume 
$y_q>0$ (resp. $y_r>0$). 

Recall that the Seifert invariants of the $r$ leg are $(r,\omega'_3)$, cf. subsection (\ref{ss:3.3}).

 If we perform a $-x_r/y_r$ surgery along $K_r$ then the $r$ leg 
 of $\Sigma(p,q,r) $ is modified as follows (the vertex decorated by $-c_{t''}$ is glued to the central vertex in both cases):

\begin{picture}(300,40)(0,-10)
 \put(30,5){\circle*{4}}
    \put(60,5){\circle*{4}}
 \put(110,5){\line(-1,0){10}}
     \put(110,5){\circle*{4}}
   \put(30,5){\line(1,0){40}}
 \put(85,5){\makebox(0,0){$\ldots$}}
      \put(30,15){\makebox(0,0){$-c_{t''}$}}
        \put(60,15){\makebox(0,0){$-c_{t''-1}$}}
 \put(110,15){\makebox(0,0){$-c_{1}$}}

\put(130,5){\makebox(0,0){$\longrightarrow $}}

 \put(150,5){\circle*{4}}
    \put(180,5){\circle*{4}}
 \put(230,5){\line(-1,0){10}}
     \put(230,5){\circle*{4}}
   \put(150,5){\line(1,0){40}}
 \put(205,5){\makebox(0,0){$\ldots$}}
      \put(150,15){\makebox(0,0){$-c_{t''}$}}
        \put(180,15){\makebox(0,0){$-c_{{t''}-1}$}}
 \put(230,15){\makebox(0,0){$-c_{1}$}}


 \put(270,5){\circle*{4}}
    \put(310,5){\circle*{4}}  \put(360,5){\circle*{4}}
   \put(230,5){\line(1,0){90}}
    \put(360,5){\line(-1,0){10}}
 \put(335,5){\makebox(0,0){$\ldots$}}
      \put(270,15){\makebox(0,0){$-c_0-k_1$}}
        \put(310,15){\makebox(0,0){$-k_2$}}
          \put(360,15){\makebox(0,0){$-k_s$}}
 
\end{picture}
 
\noindent where $x_r/y_r=[[k_1,k_2,\ldots, k_s]]$ with 
$k_1\in \Z$, and $k_i\geq 2$ for $i\geq 2$. 

The graph at the right hand side is the $r'$ leg of the surgery manifold (after 
plumbing calculus which  transforms  it into 
a negative definite string, if it is needed). 

In particular, its determinant is $r'$ 
up to a sign $\epsilon _r=\pm 1$. If this string is negative definite, then the sign is $\epsilon _r =1$. If not, then 
in the procedure of the calculus we have to apply either a $(+1)$ blow down, or a 0-chain absorption, or a splitting which induces an orientation reversing (for oriented plumbing calculus see 
\cite{Neumann_plumbing}, or \cite[3.3.A]{Book}). According to this, if the string is not negative definite, we define  $\epsilon_r=-1$. 

Next, we compute the determinant of this string (the next formula can be obtained e.g. by the determinant formula from \cite[2.2.5(6)]{Book}, 
where $G$ plays the role of the edge
connecting the vertices decorated by 
$-c_1$ and $-c_0-k_1$). We obtain 
$\epsilon _r r'= r(c_0y_r+x_r)-\omega'_3y_r$.
But, from (\ref{eq:omega'}), $pq=c_0r-\omega'_3$, hence 
the two identities combined give
\begin{equation}\label{eq:r'}
\epsilon_r r'=x_r r +y_r pq.
\end{equation}
We have similar equations for the $q$ leg as well (whenever the coefficient is not $\infty$). 


We can also understand this in terms of the Dehn surgery description of a Brieskorn sphere. To retrieve~\eqref{eq:r'}, we can express the slope of the Seifert longitude in the final surgered manifold in terms of the Seifert numbers of $\Sigma(p, q, r)$ and the surgery coefficients $-x_q / y_q, -x_r / y_r$. Following \cite{EisenbudNeumann}, we can write the Seifert fibered space $\Sigma = \Sigma(p, q, r)$ as the union
\[  
    \Sigma = \Sigma_0 \cup (D^2 \times S^1)_1 \cup (D^2\times S^1)_2 \cup (D^2 \times S^1)_3,
\]
where $\Sigma_0$ is a non-singular $S^1$ bundle over $S^2 \smallsetminus (D^2_1 \cup D^2_2 \cup D_3^2)$. Let $Q$ be a section of $\Sigma_0$, with $Q_i = Q \cap \partial (D^2 \times S^1)_i$ (we use $Q_1$ and $Q_p$, etc. interchangeably). Let $F$ denote a generic non-singular fiber of $\Sigma_0$, and let $M_q, L_q, M_r, L_r$ denote the canonical meridian and longitude of the $q-$ and $r-$ singular fibers, respectively. 

As above, we compute the case for the $r$-fiber; the $q$-fiber runs along the same lines. In homology, $(M_r, L_r)$ and $(Q_r, F)$ are related as follows:
\begin{equation}
\begin{pmatrix}
    M_r \\
    L_r
\end{pmatrix}
= \left( \begin{matrix} r & \omega_3 \\ pq & - ( b pq+ \omega_1 q + \omega_2 p) \end{matrix} \right)
\begin{pmatrix}
    Q_r \\
    F
\end{pmatrix}.
\end{equation}
Note that this differs from the matrix from~\cite{EisenbudNeumann}, because they work in the unnormalized case, whereas we do not assume the Seifert number is zero, and because we want our Brieskorn spheres to be oriented so as to be negative definite. In the surgered manifold, we have $-x_r M_r + y_r L_r = 0$ on homology, which translates to
\[
\epsilon_r (pq y_r + r x_r ) Q_r +\epsilon_r (bpq y_r+ \omega_1 q y_r  + \omega_2 p y_r + \omega_3 x_r) F =0.
\]
so the new framing of the Seifert longitude for the $r$-fiber is precisely as in~\eqref{eq:r'}. The $q$-fiber case runs along the same lines, and the matrix form is as previously computed.

Any such surgery provides an integral homology sphere if and only if 
(see e.g.  \cite[Prop. 5.3.11]{Gompf1999}) 
\[\det 
\begin{pmatrix} x_q & py_q\\ py_r & x_r\end{pmatrix}
= \pm 1.\]

For the rest of the section, we calculate the surgery coefficients corresponding to our transformation matrices, $h_1, h_2$, and $h_4$. Throughout, we work with the case $p = 2$.

First, the transformation matrix $h_1$ gives 
$q'=q$ and $r'=r+2q$. Note $q'=q$ corresponds to the $\infty$-surgery, while for the $r$ leg we add a new vertex decorated by $-c_0-1$. Hence $x_r=y_r=1$, $\epsilon _r=1$, and as a result it is the $(-1)$-surgery along $K_r$.

For $h_4$, and for the $q$-leg, we add a 
vertex to the plumbing graph decorated by $-b_0+1$. Hence $x_q=-1$ and $y_q=1$. Since the string is negative definite $\epsilon_q=1$
(corresponding to the equation $q'=-q+2r$), this is a $+1$-surgery. 
Along the $r$ leg, we add a vertex decorated by $-c_0+3$. Since 
$c_0\leq 2$ this is a positive decoration (and in the plumbing calculus we use a $(+1)$ blow down). Moreover, $x_r=-3$, $y_r=1$, hence it is a $(+3)$-surgery and the equation is 
$-r'=-3r+2q$. Then the matrix $h_4 $ is obtained as 
\[\begin{pmatrix} \epsilon_q & 0\\0& \epsilon_r\end{pmatrix}\begin{pmatrix} x_q & py_q\\ py_r & x_r\end{pmatrix} = 
\begin{pmatrix} 1 & 0\\0& -1\end{pmatrix}\begin{pmatrix} -1 & 2\\ 2 & -3\end{pmatrix} = \begin{pmatrix} -1 & 2\\ -2 & 3\end{pmatrix}= h_4.
\]

In the case of the reflection transformation $S_2$, we have $S_2\binom{q}{r}=\binom{r'}{q'}$, hence the surgery operator here is in fact 
\[\begin{pmatrix} -5 & 4\\-4& 3\end{pmatrix}\begin{pmatrix} q  \\ r \end{pmatrix} = \begin{pmatrix} q' \\  r'\end{pmatrix}.\]
In this case, on the $q$ leg we glue two $(-2)$-vertices, and $b_0=4$, hence $k_1=-2$, $k_2=2$, that is, 
$x_q/y_q=-2-\frac{1}{2}=-5/2$. Hence this corresponds to a $(+5/2)$-surgery (and $\epsilon_q=1$).
On the other hand, on the $r$ leg, in order to `kill' the two $(-2)$-vertices we have to glue to the end a $(-1)$-vertex and to this one a $(-2)$-vertex. (Then, if we blow down these two consecutively, the old end $(-2)$-vertex transforms into a 0-vertex, and then we perform a `splitting' which eliminated these two end vertices.)
Hence $\epsilon _r=-1$, $c_0=2$, $-c_0-k_1=-1$, $k_1=-1$, 
$k_2=2$, $x_r/y_r=-1-\frac{1}{2}=-3/2$. It follows that this is a 
$(+3/2)$-surgery. 
The transformation matrix is:
\[\begin{pmatrix} \epsilon_q & 0\\0& \epsilon_r\end{pmatrix}\begin{pmatrix} x_q & py_q\\ py_r & x_r\end{pmatrix} = 
\begin{pmatrix} 1 & 0\\0& -1\end{pmatrix}\begin{pmatrix} -5 & 4\\ 4 & -3\end{pmatrix} = \begin{pmatrix} -5 & 4\\ -4 & 3\end{pmatrix}.
\]

\section{Certain lattice embeddings and positive \texorpdfstring{$d$-invariants }{d-invariants}}\label{s:6}

First we recall the following result of Lecuona and Lisca, a particular case of   \cite[Lemma 3.3]{LL}:
\begin{theorem} \label{th:LL}
Consider the following family of graphs $\Gamma_b$ for $b\geq 2$:

\begin{center}
\begin{tikzpicture}[
    v/.style={circle, fill, inner sep=0pt, minimum size=4pt},
    x=1.3cm, y=1.3cm
  ]
  \foreach \i/\w/\n in {0/-b/A, 1/-2/B, 3/-2/E, 4/-2/F}
    \node[v, label=above:{$\w$}, label=below:{$\n$}] (v\i) at (\i,0) {};

  \node[v, label=above:{$-2$}, label=below left:{$C$}] (v2) at (2,0) {};

  \node[v, label=left:{$D$}, label=right:{$-2$}] (d) at (2,-1) {};

  \draw (v0) -- (v1) -- (v2) -- (v3) -- (v4);
  \draw (v2) -- (d);

  \node[anchor=east] at (-0.4,0) {$\Gamma_b\colon$};
\end{tikzpicture}
\end{center}

Then the lattice $L_b=L(\Gamma_b)$ of $\Gamma_b$ cannot be embedded into a lattice of type $\langle -1\rangle ^{\oplus N}$. 
\end{theorem}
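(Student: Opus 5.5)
The plan is to argue by contradiction and use the rigidity of $(-1)$-lattices. Suppose $\phi\colon L_b\hookrightarrow \langle -1\rangle^{\oplus N}$ is an embedding, and let $e_1,\dots,e_N$ be the standard basis with $e_k^2=-1$. The first step is to pin down the images of the five $(-2)$-vertices. Every vector of square $-2$ in $\langle-1\rangle^{\oplus N}$ has the form $\pm e_k\pm e_l$ with $k\neq l$. Two adjacent $(-2)$-vertices (intersection number $1$) must share exactly one basis vector, with compatible signs. Two non-adjacent ones (intersection $0$) share either no basis vector or exactly two, with the signs making the product vanish.

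Next I would run the case analysis on the $D_5$-configuration $B,C,D,E,F$. This is the sub-star with centre $C$ and legs $B$, $D$, and $E$--$F$. Up to automorphisms of $\langle-1\rangle^{\oplus N}$ (permutations and sign changes of the $e_k$), write $C=e_1-e_2$. Then $B$, $D$ and $E$ each meet $C$ in exactly one basis vector, and they are pairwise orthogonal. One checks there are two possibilities:
\begin{enumerate}[label=(\alph*)]
\item the three neighbours use three distinct new vectors, which is impossible, since each neighbour must contain $e_1$ or $e_2$, so two of them share one of these;
\item after relabelling, $B=e_2-e_3$ and $D=e_2+e_3$, or a variant such as $B=e_2-e_3$ and $E=e_1+e_4$.
\end{enumerate}
Pushing this through, with the orthogonality constraints and then adding $F$ (adjacent only to $E$), should give a short list of normal forms. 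For $D_4$ one gets either $B,D,E$ all involving a common vector, or a form like $\{e_2-e_3,\,e_2+e_3\}$ together with $e_1\pm e_4$. The extra vertex $F$ then forces essentially a unique configuration, up to symmetry, in which all five images lie in the span of some small set $e_1,\dots,e_m$ with $m\le 5$ or $6$.

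Finally I bring in $A$, with $A^2=-b$, $(A,B)=1$, and $A$ orthogonal to $C,D,E,F$. Write $\phi(A)=\sum x_k e_k$. Orthogonality to $C,D,E,F$ and the pairing with $B$ give linear equations on the coordinates $x_1,\dots,x_m$. The aim is to show these equations are inconsistent with integrality. A typical mechanism: orthogonality to $e_2-e_3$ and $e_2+e_3$ forces $x_2=x_3=0$, and then $(A,B)=x_2-x_3=0\neq 1$. The remaining coordinates $x_k$ with $k>m$ are free and only contribute to $-A^2$, so $b$ plays no role, consistent with the statement holding for all $b\ge2$.

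The main obstacle is making the enumeration of embeddings of the $D_5$ part complete. In particular, the degenerate case where two non-adjacent vertices share two basis vectors must not be overlooked. I would organise it by first classifying embeddings of $D_4$ (centre plus three neighbours), then extending to $F$. If some normal form unexpectedly admits a consistent $A$, I would double-check it against the determinant. Since $\det\Gamma_b=4(b-1)-\dots$ is not a perfect square for generic $b$, a finite-index embedding is excluded anyway. The real content, however, is the non-finite-index case, which the coordinate argument above handles.
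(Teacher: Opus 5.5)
Your strategy works and differs from the paper's. The paper does no embedding analysis at all. It reads off the Seifert invariants $(-2;(2,1),(3,2),(2b-1,b))$ of $\Gamma_b$ and checks the two inequalities in the hypothesis of Lecuona--Lisca's Lemma~3.3, so non-embeddability is imported from a general criterion for star-shaped lattices. That buys brevity and places $\Gamma_b$ inside a broad family. Your route is self-contained and elementary: it uses only that square $-2$ vectors of $\langle -1\rangle^{\oplus N}$ are $\pm e_k\pm e_l$, and it makes visible why $b$ is irrelevant.

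As written, however, two details are wrong, and fixing them makes the enumeration almost unnecessary.

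\emph{The neighbour configuration.} With $\phi(C)=e_1-e_2$, each neighbour of $C$ has the form $-e_1\pm e_k$ or $e_2\pm e_k$ with $k\geq 3$. Three pairwise orthogonal vectors of this kind cannot all contain the same one of $-e_1$, $e_2$, so your alternative ``$B,D,E$ all involve a common vector'' never occurs. Instead, by pigeonhole, two of them, say $X$ and $Y$, contain the same one. Orthogonality then forces them to use the same $e_k$ with opposite signs, so $X+Y\in 2\,\langle -1\rangle^{\oplus N}$.

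\emph{The final contradiction.} Your mechanism (``$A$ orthogonal to $e_2\pm e_3$, hence $x_2=x_3=0$'') misapplies the constraints. In your own configuration $B=e_2-e_3$, $D=e_2+e_3$, the vector $A$ is orthogonal to $D$ but pairs to $1$ with $B$. The contradiction is parity: $1=(A,B)+(A,D)=(A,B+D)\in 2\mathbb{Z}$.

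The same trick disposes of the other two possible doubled pairs directly. Use $A$ as witness for $\{B,E\}$ and $F$ for $\{D,E\}$; in each case the witness pairs to $1$ with one member and to $0$ with the other. So neither the full $D_5$ classification nor the determinant fallback is needed, and the argument never uses the value of $A^2$.
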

\begin{proof}
According to \cite{LL}, if the Seifert invariants 
$(-2; \{(\alpha_i, \omega_i)\}_{i=1}^3)$ satisfy
$1< \alpha_1/\omega_1\leq  \alpha_2/\omega_2\leq \alpha_3/\omega_3$
and $ 1< \omega _2/\alpha_2+\omega_3/\alpha_3$ then 
$L(-2; \{(\alpha_i, \omega_i)\}_{i=1}^3)$ cannot be embedded in any $\langle -1\rangle ^{\oplus r}$. In our case
$1< 3/2\leq  (2b-1)/b< 2/1$ and $1< b/(2b-1)+1/2$.
\end{proof}
If we change the canonical basis 
of the lattice 
$L(\Gamma_b)$ (provided by the plumbing presentation)
of the above graph $\Gamma_b$
by a $\Z$-linear isomorphism 
$(A,B,C,D,E,F)\mapsto (A, B-D, D,C,E,F)$,
 we get the following new  presentation of the same lattice $L_b$:

\begin{center}
\begin{tikzpicture}[
    v/.style={circle, fill, inner sep=0pt, minimum size=4pt},
    x=1.3cm
  ]
  \foreach \i/\w/\n in {0/-b/A, 1/-4/B-D, 2/-2/D, 3/-2/C, 4/-2/E, 5/-2/F}
    \node[v, label=above:{$\w$}, label=below:{$\n$}] (v\i) at (\i,0) {};
  \draw (v0) -- (v1)  (v2) -- (v3) -- (v4) -- (v5);
  \draw[double, double distance=2pt] (v1) -- (v2);

  \node[anchor=east] at (-0.4,0) {$\widetilde{\Gamma}_b\colon$};
\end{tikzpicture}
\end{center}

Here the double edge means intersection multiplicity 2. 

\begin{theorem}\label{th:po}
If the lattice $L_b=L(\widetilde{\Gamma}_b)$ (with $b\geq 2$)
can be embedded into the lattice $L(\Sigma(2,q,r))$ of 
$\Sigma(2,q,r)$, then $d(\Sigma(2,q,r))>0$.
\end{theorem}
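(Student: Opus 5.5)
We argue by contradiction, using the characterization of vanishing $d$ recalled in \cref{ss:top}: for the negative definite unimodular lattice $L=L(\Sigma(2,q,r))$ one has $d(\Sigma(2,q,r))=0$ if and only if $L$ is diagonalizable over $\Z$, i.e.\ $L\cong\langle -1\rangle^{\oplus s}$ with $s=\operatorname{rank}L$ (Elkies' theorem together with the Ozsv\'ath--Szab\'o inequality). So assume $d(\Sigma(2,q,r))=0$ and fix such an isomorphism.

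Suppose $\iota\colon L_b=L(\widetilde{\Gamma}_b)\hookrightarrow L(\Sigma(2,q,r))$ is a lattice embedding, i.e.\ an injective $\Z$-linear map preserving the intersection form. Composing $\iota$ with the isomorphism $L(\Sigma(2,q,r))\cong\langle -1\rangle^{\oplus s}$ gives an embedding of $L_b$ into $\langle -1\rangle^{\oplus s}$. The lattices $L(\widetilde{\Gamma}_b)$ and $L(\Gamma_b)$ are the same lattice, since they differ only by the explicit change of basis $(A,B,C,D,E,F)\mapsto(A,B-D,D,C,E,F)$. We thus get an embedding of $L(\Gamma_b)$ into $\langle -1\rangle^{\oplus N}$ with $N=s$, contradicting \cref{th:LL}. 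Hence $d(\Sigma(2,q,r))\neq 0$, and since $d$ is non-negative and even for Brieskorn spheres, $d>0$.

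The only point that needs care is that the embedding notion in the hypothesis matches the one in \cref{th:LL}. I would read both as isometric injections, not necessarily primitive and with no condition on the image, which is the notion used by Lecuona--Lisca, so composing with an isomorphism is harmless. I would also double-check that the change of basis is invertible over $\Z$ and reproduces exactly the pairings drawn in $\widetilde{\Gamma}_b$. Its matrix is unipotent up to a permutation, hence invertible. For the pairings one computes, for instance, $(B-D,B-D)=-2-2-0=-4$ and $(B-D,D)=0-(-2)=2$, giving the double edge, and the remaining pairings follow similarly. Beyond these verifications the statement is a formal consequence of earlier results, so I expect no genuine obstacle here. The real work lies in producing the embedding of $L_b$ into $L(\Sigma(2,q,r))$ for $\frac12<\frac qr\le\frac35$, which is what this theorem will later be applied to.
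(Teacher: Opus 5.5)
Your proposal is correct and is essentially the paper's argument: Elkies' theorem identifies $d(\Sigma(2,q,r))=0$ with $L(\Sigma(2,q,r))\cong\langle -1\rangle^{\oplus s}$, so an embedding of $L_b$ would give an embedding of $L(\Gamma_b)$ into a diagonal lattice, contradicting Theorem~\ref{th:LL}. Your check that the basis change $(A,B,C,D,E,F)\mapsto(A,B-D,D,C,E,F)$ is invertible over $\Z$ and reproduces the pairings of $\widetilde{\Gamma}_b$ is correct, and it makes explicit a step the paper leaves implicit.
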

\begin{proof}
    Combine \cref{th:LL} with a theorem of Elkies \cite{Elkies}, which states (reformulated for the case of  $\Sigma(2,q,r)$) that $L(\Sigma(2,q,r))=\langle -1\rangle ^{\oplus s}$ if and only if $d(\Sigma(2,q,r))=0$.
\end{proof}

\begin{theorem}\label{thm:nonvanishing}
    \[d(\Sigma(2,q,r))>0 \quad \text{ for all } q,r \text{ with } \quad \frac12<\frac{q}{r}<\frac23.
    \]
\end{theorem}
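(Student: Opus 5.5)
The proof has two parts. First I use the reflection $S_2$ to reduce to half of the interval. Then I embed the Lecuona--Lisca lattice $L_b$ into $L(\Sigma(2,q,r))$, so that \cref{th:po} applies.

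\textbf{Reduction.} The operator $S_2$ preserves $[\frac12,\frac23]$, reverses its orientation and fixes $\frac35$. So it maps $(\frac35,\frac23)$ bijectively onto $(\frac12,\frac35)$. By \cref{prop:EP} (with $p=2$) the lattices of $(2,q,r)$ and $(2,q',r')$, where $\binom{q'}{r'}=S_2\binom{q}{r}$, are isomorphic, and hence so are their $d$-invariants. It is therefore enough to treat $\frac12<\frac qr\le\frac35$. At the endpoint $\frac35$ we have $\Sigma(2,3,5)$, which has $d=2$; alternatively, that case is covered by the argument below.

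\textbf{Structure of the graph.} For $\frac12<\frac qr<\frac35$, Step 1 of the proof of \cref{prop:EP} gives the $r$-leg expansion $\frac{2q}{r}=[[2,2,2,c_3,\dots]]$. Thus the $r$-leg ends with two $(-2)$-vertices, and $c_0=2$. Likewise $\frac{2r}{q}=[[4,b_1,\dots]]$, and the $p$-leg is a single $(-2)$-vertex.

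I would then look for a configuration of six vectors in $L(\Sigma(2,q,r))$ realizing $\widetilde\Gamma_b$, or equivalently $\Gamma_b$:
\begin{itemize}
\item a $(-2)$-chain $D,C,E,F$ with a branch;
\item a vertex $B$ joined to $C$;
\item an end vertex $A$ of weight $-b$, with $b\ge2$.
\end{itemize}
The natural candidates are the central vertex, the $p$-leg $(-2)$-vertex, and the two end $(-2)$-vertices of the $r$-leg. These already give a star with $(-2)$-arms, provided the central weight is $-2$. In general it is not, so the remaining vertices would be built as dual-type combinations, in the spirit of the vector $F=E_i^*-E_j^*-E_n^*$ used in Step 2 of \cref{prop:EP}. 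Their pairings would be computed with the splice-diagram formula \eqref{eq:Iinverse}, using the determinant relations $2z-y=1$ and $z=p=2$ established there.

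\textbf{Main obstacle.} The hard part is choosing vectors whose pairwise intersections are exactly those of $\Gamma_b$ (or $\widetilde\Gamma_b$, with its double edge) uniformly in $q,r$. I would first try the following:
\begin{itemize}
\item take $C$ to be the central vertex adjusted by duals, so that it becomes a $(-2)$-vector;
\item take $D$ to be the $p$-leg vertex;
\item take $E,F$ to be the two terminal $(-2)$-vertices of the $r$-leg;
\item take $B$ to be the first vertex of the $q$-leg, or a dual vector, forced to have square $-2$ using $b_0=4$;
\item let $A$ absorb the rest of the $q$-leg.
\end{itemize}
Here $b$ is allowed to vary, which gives the needed flexibility. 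Any embedding suffices, not necessarily a primitive one, since only non-embeddability into $\langle-1\rangle^{\oplus N}$ is used. If the resulting vectors have the correct Gram matrix, \cref{th:po} gives $d>0$ on $(\frac12,\frac35]$, and the reduction extends this to the whole open interval $(\frac12,\frac23)$.
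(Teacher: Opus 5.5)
Your reduction via $S_2$ and \cref{prop:EP}, and your plan to embed $L_b$ and apply \cref{th:po}, match the paper's architecture. The gap is that the proposal stops at the one step that carries the content: you never produce an embedding $L_b\hookrightarrow L(\Sigma(2,q,r))$. The vectors you list (``the central vertex adjusted by duals'', ``the first vertex of the $q$-leg, or a dual vector, forced to have square $-2$'', ``$A$ absorbs the rest of the $q$-leg'') are not specified. Their Gram matrix is never computed, and your conclusion is explicitly conditional. Your starting observation is also wrong. On $(\frac12,\frac35]$ one has $\frac{2q}{r}\in(1,\frac65]$, so $\frac{2q}{r}=[[2,2,2,2,2,\dots]]$ and the $r$-leg has at least four vertices. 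Its two terminal vertices are therefore not adjacent to the central vertex, so the central vertex, the $p$-leg vertex and these two do not form a star. The central weight and the inner part of the $r$-leg vary with $(q,r)$. So nothing in your sketch shows how to correct the central vertex to a $(-2)$-vector adjacent to both the $p$-leg vertex and a terminal $r$-leg vertex, uniformly in $(q,r)$.

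The missing idea is to use the duals of the two \emph{end} vertices and ignore the centre entirely. Let $A$ be the end vertex of the $q$-leg, of weight $-b$ with $b=b_1\ge 2$ (from $\frac{2r}{q}=[[4,b_1,\dots]]$). Let $C,E,F$ be the last three vertices of the $r$-leg, all of weight $-2$. By \eqref{eq:Iinverse} and the interpretation of $b_0,c_0$ as determinants, you get $(A^*,A^*)=-b_0=-4$ and $(C^*,C^*)=-c_0=-2$. You also get $(A^*,C^*)=-\det(\Gamma-\overline{AC})=-2$, since deleting the path from $A$ to $C$ leaves only the $(-2)$-vertex of the $p$-leg. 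All remaining pairings follow from $(E_i^*,E_j)=-\delta_{ij}$. Hence the six vectors $-A,\ A^*,\ -C^*,\ C,\ E,\ F$ have exactly the Gram matrix of $\widetilde\Gamma_b$, with the double edge given by $(A^*,-C^*)=2$; this is the embedding the paper uses. It needs $c_1=c_2=c_3=2$, i.e.\ three terminal $(-2)$-vertices on the $r$-leg. That is more than the two you read off from Step~1 of \cref{prop:EP}, and it comes from the sharper expansion valid on $(\frac12,\frac35]$.
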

\begin{proof}
First we fix $q/r$ from the interval $(1/2, 3/5]$. Then a computation shows that $2r/q=[[b_0,b_1, \ldots]]$
has the form $[[4, b, \ldots]]$ for some $b\geq 2$. 
Furthermore, 
 $2q/r=[[c_0,c_1, \ldots]]=[[2,2,2,2,2,c_5,\dots ]]$.
Then the minimal good resolution graph of $\Sigma(2,q,r)$ has the following shape 
(with $b\geq 2$):
 
\begin{center}
\begin{tikzpicture}[
    v/.style={circle, fill, inner sep=0pt, minimum size=4pt},
    dots/.style={fill=white, inner sep=1pt, sloped},
    on grid,
    node distance=1.3cm
  ]
  \node[v, label=above:{$-2$}]                                 (a1) {};
  \node[v, right=of a1]                                        (a2) {};
  \node[v, right=of a2, label=above:{$-2$}]                    (r1) {};
  \node[v, right=of r1, label=above:{$-2$}, label=below:{$F$}] (r2) {};
  \node[v, right=of r2, label=above:{$-2$}, label=below:{$E$}] (r3) {};
  \node[v, right=of r3, label=above:{$-2$}, label=below:{$C$}] (r4) {};
  \node[v, below=of a2, label=left:{$A$}, label=right:{$-b$}]  (b)  {};

  \draw (a1) -- (a2);
  \draw (r1) -- (r2) -- (r3) -- (r4);
  \draw (a2) -- node[dots] {$\cdots$} (r1);
  \draw (a2) -- node[dots] {$\cdots$} (b);
\end{tikzpicture}
\end{center}

Next, we consider the following elements  in the lattice 
$L(\Sigma (2,q,r))$: $F, E, C, -C^*, A^*, -A$.
We claim that their intersection pattern is the following:

\begin{center}
\begin{tikzpicture}[
    v/.style={circle, fill, inner sep=0pt, minimum size=4pt},
    x=1.3cm
  ]
  \foreach \i/\w/\n in {0/-b/-A, 1/-4/A^*, 2/-2/-C^*, 3/-2/C, 4/-2/E, 5/-2/F}
    \node[v, label=above:{$\w$}, label=below:{$\n$}] (v\i) at (\i,0) {};
  \draw (v0) -- (v1)  (v2) -- (v3) -- (v4) -- (v5);
  \draw[double, double distance=2pt] (v1) -- (v2);
\end{tikzpicture}
\end{center}

The non-immediate part follows similarly as the identities (i)-(ii)-(iii) 
from Proposition \ref{ex:5}, namely 
$-(C^*,C^*)=c_0=2$, $-(A^*,A^*)=b_0=4$, $-(C^*,A^*)=2$ (the determinant of the remaining graph after we delete the path connecting the vertices of $A$ and $C$).

Now we can apply Theorem \ref{th:po}, and obtain that 
  $d(\Sigma(2,q,r))>0$ whenever  $1/2 < q/r \leq 3/5$.
Finally, we observe that the operator $S_2=(-4,3;-5,4)$ preserves the $d$-invariant for $q/r$ in the interval $(1/2, 2/3)$, cf.\! \cref{prop:EP}. But this operator sends 
 the interval $(1/2,3/5]$ into $[3/5,2/3)$. In particular, $d(\Sigma(2,q,r))>0$ whenever  $3/5 \leq q/r < 2/3$ as well.
\end{proof}

\section{Proof of Theorems 1, 2 and 3}\label{sec:proof_main_thms}

Let $\mathcal{M}$ be defined as in (\ref{eq:M}) and $\mathcal{V}$ as in Theorem \ref{th:int1} (II). First note that Theorem \ref{th:int2} and Theorem \ref{th:int3} 
are equivalent  by the commuting relations (\ref{eq:com2})
which identify the $\widetilde{\mathfrak{m}}$ action/orbits with the $\widetilde{\mathfrak{m}}^*$ action/orbits.

From the combinatorics of the Stern--Brocot sequences and from the main constructions  of the black/white intervals and V-fractions  we get that $(0,1)\cap \mathbb{Q}=\mathcal{M}\sqcup\mathcal{V}$.
Lemma \ref{lem:SB2} guarantees that the maximal even intervals are exactly the newly created intervals, and Lemma \ref{lem:SB4}
provides the correspondence between the newly created intervals and the V-fractions. This proves Theorem \ref{th:int1} without the corresponding $d$ values. Moreover, we also obtain that the union of $\widetilde{\mathfrak{m}}^*$ orbits of the elements of 
$[\frac{1}{2}, \frac{2}{3}]\cap \mathbb{Q}$ equals $\mathcal{M}$.

As a consequence, what we have to verify is that $h_1,h_2,h_4$
preserve the $d$ invariant  and that $d>0$ in the interval 
$[\frac{1}{2}, \frac{2}{3}]$. The first part follows from 
the fact that $h_1,h_2,h_4$ preserve the stable equivalence classes of the lattices (see \cref{s:Gs}). The second part 
is proved in \cref{s:6}. 

\section{Even lattices}\label{sec:even_plumbings}

The aim of this section is to give motivation for the calculations and methods of this paper. We do this in two ways, first, we summarize and reformulate the results of Yamada and Matsumoto from~\cite{Yamada_Matsumoto_1983} on so-called \emph{even plumbing} (defined below). Second, we calculate the value of $d$ for two specific families of Brieskorn spheres, illustrating the techniques of this paper in concrete setting.

\subsection{Even plumbings} In this section, we interpret the main theorem of Yamada and Matsumoto \cite{Yamada_Matsumoto_1983}, in the language of monoids in $\GL(2,\Z)$. They classified all Brieskorn spheres $\Sigma(2,q,r)$ which satisfy that the minimal resolution graph has all decorations even. We call them \textit{even plumbings}.

The $d$-invariant satisfies $s \geq 4d$,  where $s$ is the number of vertices of the minimal resolution graph. The even plumbing condition is equivalent to the relation $s = 4d$. Using this together with our results, we can deduce the result of Neumann and Yamada--Matsumoto as a corollary:

\begin{corollary}[\cite{Neumann_invariant,Yamada_Matsumoto_1983}]
    If $\Sigma(2,q,r)$ has even plumbing, then 
    $1/2 < q/r <2/3$.
\end{corollary}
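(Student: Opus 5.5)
The plan is to argue by contraposition. Assume $\Sigma(2,q,r)$ has an even plumbing. By \cref{ss:top}, an even lattice has $0$ as a characteristic vector, so $d=s/4>0$ (here $s\geq 1$, since $\Sigma(2,q,r)\neq S^3$ when $1<q<r$). Therefore $q/r\notin\mathcal{V}$ by \cref{th:int1}(ii), and hence $q/r\in\mathcal{M}$. The task is to show that among the points of $\mathcal{M}$, only those in the first black interval $(\frac12,\frac23)$ can carry an even plumbing.

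By \cref{th:int3}(i), every $q/r\in\mathcal{M}$ outside $[\frac12,\frac23]$ has the form $\tilde g\binom{q_0}{r_0}$ with $\frac{q_0}{r_0}\in(\frac12,\frac23)$ and $\tilde g$ a nonempty word in $h_1,h_2,h_4$. Endpoints are not an issue, since $\frac12,\frac23$ have an even entry and so never arise as $q/r$ with $q,r$ odd. So it suffices to show the following: if $\binom{q'}{r'}=h_i\binom{q}{r}$ with $i\in\{1,2,4\}$, then the minimal graph $\Gamma'$ of $(2,q',r')$ is not even. I would extract this from the explicit leg descriptions in \cref{s:Gs}.

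\textbf{Case $h_1$.} $\Gamma'$ gains a vertex of weight $-(c_0+1)$ at the end of the $r$-leg. Its $r'$-leg begins with $2q'/r'=[[1,1+c_0,\dots]]$, so the end vertex of the $r'$-leg is $-(1+c_0)$. If $\Gamma'$ were even, $c_0$ would be odd. Meanwhile $\Gamma$'s $r$-leg is a subleg of $\Gamma'$, so it is also even. I would then use parity of the determinant relations ($c_0=\det(\Gamma-v)$) to derive a contradiction. Simplest route: in an even unimodular tree the Wu class is $0$, so $Z_K\in 2L$. Evaluating $-(Z_K,E_v^*)$ as in the proof of \cref{prop:mubar} gives $(p-1)(q-1)-c_0$, which must then be even, so $c_0$ is even.

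Concretely, for $\Gamma$ evenness forces $c_0$ even, while for $\Gamma'$ it forces $1+c_0$ even. Both cannot hold, yet an even $\Gamma'$ would make $\Gamma$ even (being a subgraph with the same weights). This gives the contradiction. The \textbf{case $h_2$} is the symmetric argument on the $q$-leg.

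\textbf{Case $h_4$.} From \cref{ex:5}, $\Gamma'$ has a new vertex $-b_0+1$ on the $q$-leg, and the $r$-leg end changes to $-c_1-1$ (Case A), or to $-c_1-1$ followed by a $-2$ vertex (Case B). Here $b_0'=3$ is the first term for the new $q'$-leg. The evenness criterion just derived, applied to $\Gamma'$ at that end, requires $b_0'$ even, a contradiction. (The criterion is that $b_0'$ must be even whenever $\Gamma'$ is even: the $E_j$-coefficient of $Z_K$ is $(p-1)(q'-1)-b'_0$, with $(p-1)(q'-1)$ even.)

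This suggests a cleaner uniform argument, which I would adopt: even plumbing forces all three ``$0$-th'' coefficients $a_0,b_0,c_0$ to be even. For $h_1$, the new $c_0'=1$; for $h_2$, $b_0'=1$; for $h_4$, $b_0'=3$. So the image of \emph{any} point under $h_1,h_2,h_4$ is never even. Since every point of $\mathcal{M}\setminus(\frac12,\frac23)$ lies in such an image, we are done.

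The main obstacle is verifying the parity identity $-(Z_K,E_v^*)=(p-1)(q-1)-c_0$ for each end vertex $v$ carefully, with $\lceil\cdot\rceil$ and the leg conventions, and checking that the first coefficients for $h_1,h_2$ images are indeed $1$. I would confirm the latter from $2q'/r'=[[1,1+c_0,\dots]]$ in \cref{s:Gs} and its $q$-leg analogue.
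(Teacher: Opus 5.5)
Your proposal is correct, but it handles the key step differently from the paper. Both arguments reduce to showing that no $h_i$-image $(i=1,2,4)$ is even. The paper does this with the $d$-invariant. The $h_i$ preserve $d$ by stable equivalence and raise the vertex count $s$ by $1$ or $2$, so $s'>s\geq 4d=4d'$. Since even plumbing is equivalent to $s=4d$, no image can be even, and the orbit of $\frac13$ is disposed of by $d=0$.

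You use a mod-$2$ obstruction instead. For an even unimodular lattice, $Z_K\in 2L$. The $Z_K$-coefficient at the end of a leg equals $(p-1)(\text{other exponent}-1)$ minus the zeroth Hirzebruch--Jung coefficient of that leg. With $p=2$ and $q,r$ odd, evenness therefore forces $b_0$ and $c_0$ to be even, whereas every $h_i$-image has an odd zeroth coefficient.

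Two small slips, neither affecting the argument:
\begin{itemize}
\item For $h_2$, the surgered fiber becomes the $r'$-leg of $\Sigma(2,r,q+2r)$. So the coefficient equal to $1$ is $c_0'=\lceil 2r/(q+2r)\rceil$, not $b_0'$; in fact $b_0'=\lceil 2(q+2r)/r\rceil\in\{5,6\}$.
\item At the end of the $q'$-leg the coefficient is $(p-1)(r'-1)-b_0'$, not $(p-1)(q'-1)-b_0'$. This is harmless because both quantities are even here.
\end{itemize}

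Your route needs no $d$-invariance for the image step, only the arithmetic of $\lceil 2q'/r'\rceil$ and $\lceil 2r'/q'\rceil$. Pushed one step further, it makes the monoid reduction and Theorems \ref{th:int1} and \ref{th:int3} unnecessary. Apply the parity constraint directly to $(q,r)$:
\begin{itemize}
\item Evenness forces $c_0=\lceil 2q/r\rceil=2$, since $c_0\in\{1,2\}$ and $2q/r\neq 1$. This gives $q/r>\frac12$.
\item Then $2<2r/q<4$, so $b_0=\lceil 2r/q\rceil\in\{3,4\}$, and evenness forces $b_0=4$. This gives $q/r<\frac23$.
\end{itemize}
The paper's route, by contrast, exhibits the corollary as a consequence of its main machinery. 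It also carries the quantitative information that $s-4d$ strictly increases along the $\widetilde{\mathfrak{m}}^*$-orbits.
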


\begin{proof}   
    We know under the action of the monoid generated by $h_1,h_2,h_4$ the unit interval $0<q/r<1$ splits into the orbits of elements of $[\frac{1}{2},\frac{2}{3}]$ and to the orbit of $1/3$. For the orbit of $1/3$, $d$ vanishes and clearly $s>4d$ in that case. Other fractions either lie in the interval $[\frac{1}{2},\frac{2}{3}]$ or they are images of one of the generators $h_1,h_2,h_4$. However, all three preserve $d$, while raising the number of vertices $s$ by $1$ or $2$, so no plumbing in the image can be even. 
\end{proof}

Consider the $\mathrm{GL}(2,\Z)$ matrices 
\[
    P = \Mat{9}{-4}{16}{-7},
\quad
T = \Mat{7}{4}{12}{7},\quad
Z = TU = \Mat{4}{7}{7}{12}.
\]
and 
\[\overline{P} = S_2 P, \quad \overline{T} = S_2 T,\quad \overline{Z} = S_2 Z,
\]
where $S_2 = \begin{inlinematrix}
    -4 & 3\\ -5 & 4
\end{inlinematrix}$.

We represent the plumbing graph of $\Sigma(2,q,r)$ by an array with two rows, describing (negatives of) the decorations on the $q$-leg and $r$-leg, for example $\Sigma(2,3,5)$ is represented as
\[
\ym{2,2}{2,2,2,2}.
\]
In a similar way as in the previous sections, one can check the following facts about the effect of the operators $P, T, Z$ on plumbing graphs for a given $q/r$ with $1/2<q/r<2/3$. 

\begin{enumerate}
\item
The operator $P$ sends
\(
\ym{x, 2k}{y}  
\)
to
\(
\ym{x, 2(k+1)}{y, 2, 2, 2, 2, 2, 2, 2, 2}, 
\)
\item 
The operator $T$ sends $\ym{x}{y}$
    to
    \(
    \ym{x, 8, 2}{y, 4, 2, 2, 2, 2, 2}
    \),
\item
The operator $Z$ sends
    \(
    \ym{x}{y}
    \)
    to
    \(
    \ym{y, 6, 2}{x, 6, 2, 2, 2, 2, 2}.
    \)
\end{enumerate}
Here $x$, and $y$ are arbitrary sequences and $k$ an arbitrary integer.
 
In the terminology of \cite{Yamada_Matsumoto_1983}, these 3 cases correspond to adding a \emph{particle} ($P^n$ adds the particle $p_n$), a \emph{$T$-joint}, and a \emph{$Z$-joint}, respectively. 

Consider now the submonoid $m_{ev}$ of $\GL(2,\Z)$ generated by $P,\overline{P},T,\overline{T},Z,\overline{Z}$. This monoid acts on pairs $(q,r) \in \Z^2$ as usual by the left multiplication of column vectors. 
\begin{theorem}[\cite{Yamada_Matsumoto_1983}] \leavevmode
    \begin{enumerate}
        \item The monoid $m_{ev}$ is free.
        \item The map $x \mapsto x \cdot 
\binom{3}{5}$ induces a bijection between $m_{ev}$ and the set of $\Sigma(2,q,r)$ with even plumbing. 
    \end{enumerate}
\end{theorem}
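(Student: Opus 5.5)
The plan is to run a descent argument on the number of vertices $s$, using the interval $(\tfrac12,\tfrac23)$ and the reflection $S_2$ as the basic framework. It relies on two facts established earlier. First, by the corollary above (Neumann, Yamada--Matsumoto), every even plumbing has $\tfrac12<q/r<\tfrac23$. Second, by \cref{prop:EP}, $S_2$ is an involution of this interval that fixes $\tfrac35$ and preserves the lattice, hence preserves evenness. Thus $\overline{P},\overline{T},\overline{Z}$ are $P,T,Z$ followed by this symmetry, and it suffices to understand $P,T,Z$ on (say) the half $(\tfrac12,\tfrac35]$ or its image.

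\emph{Step 1 (the operators preserve evenness and the domain).} Using the listed leg modifications (1)--(3), I will verify that $P,T,Z$ send even plumbings in $(\tfrac12,\tfrac23)$ to even plumbings. The appended entries $2(k+1),8,4,6$ and the $2$'s are all even. The central vertex is fixed by unimodularity, exactly as in the proofs of \cref{ex:5} and \cref{prop:EP}. The leg computations themselves are continued-fraction manipulations of the type in Step~1 of \cref{prop:EP}. Next I will compute the images under $P,T,Z,\overline{P},\overline{T},\overline{Z}$ of the relevant intervals (via their endpoints as column vectors). The aim is to show that the six images are pairwise disjoint subintervals of $(\tfrac12,\tfrac23)$, none containing $\tfrac35$. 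For $P$ one must restrict to its domain of applicability, namely where the $q$-leg ends in an even entry $2k$.

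\emph{Step 2 (freeness and injectivity).} Given Step~1, freeness of $m_{ev}$ and injectivity of $x\mapsto x\binom35$ follow by a ping-pong argument. Two distinct reduced words $x\neq x'$ have a first differing letter, so their images of $\binom35$ land in disjoint subintervals. Alternatively, one can argue that the last letter is recovered from the shape of the tail of the graph. Lengths are also distinguished because each letter strictly increases $s$.

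\emph{Step 3 (surjectivity, the main obstacle).} Given an even plumbing $\Sigma(2,q,r)\neq\Sigma(2,3,5)$, I must show that $\Sigma(2,q,r)$ or its $S_2$-image is in the image of exactly one of $P,T,Z$. I will then apply the inverse operation, which strictly decreases $s$, and induct down to $\tfrac35$. This requires a structural classification of the ends of the $q$- and $r$-legs of an even, unimodular, star-shaped graph with $p=2$. My approach is as follows.
\begin{itemize}
\item First, the Wu class vanishes, so $Z_K\in 2L$. Applying the parity computation of \cref{prop:mubar} then forces $c_0$ and $b_0$ to be even and constrains the $2$-chains.
\item Second, expand the determinant along the legs. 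This should show that the $r$-leg ends either in eight $2$'s, or in $4$ followed by five $2$'s, or in $6$ followed by five $2$'s, or their mirror versions after $S_2$. These cases match the tails produced by $P$, $T$ and $Z$.
\end{itemize}
I expect this case analysis of possible tails, making sure no other even configuration exists, to be the hardest step. I would first attempt it by tracking the continued fraction of $2q/r$ near $\tfrac12$ and $\tfrac23$, where long strings of $2$'s appear, exactly as in the proof of \cref{thm:nonvanishing}.
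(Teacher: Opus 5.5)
Your Steps 1 and 2 are sound, and they amount to what the paper's operator viewpoint already provides. The images of $(\tfrac12,\tfrac23)$ under $P,T,Z,\overline{P},\overline{T},\overline{Z}$ are exactly the six intervals in \eqref{eq:intervals}. These are pairwise disjoint and miss $\tfrac35$, so ping-pong gives freeness of $m_{ev}$ and injectivity of $x\mapsto x\binom35$. The leg rules (1)--(3) together with \cref{prop:EP} then show that the orbit of $\tfrac35$ consists of even plumbings. One small correction: you need the rules for $P,T,Z$ on all of $(\tfrac12,\tfrac23)$, not just on one half, since for instance $P\overline{T}$ applies $P$ to $\tfrac{49}{79}\in(\tfrac35,\tfrac23)$. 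The paper itself does not prove the converse direction; it quotes Yamada--Matsumoto for it.

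The genuine gap is Step 3, which carries the entire content of that theorem. You must show that no even plumbing has $q/r$ in one of the six gaps between the intervals of \eqref{eq:intervals}. Your proposal states an expectation here but gives no argument, and neither tool you name produces one.
\begin{itemize}
\item The Wu-class parity from \cref{prop:mubar} is just a restatement of evenness ($w=0$ iff $Z_K\in 2L$). Its conclusion that $b_0,c_0$ are even is vacuous here, since $b_0=4$ and $c_0=2$ for every $q/r\in(\tfrac12,\tfrac23)$.
\item For a Brieskorn graph the determinant is $1$ automatically. So ``expanding along the legs'' constrains nothing unless you explain how the identity \eqref{eq:b} with $b=-2$, i.e.\ $3qr-2r\omega_2-2q\omega_3=1$, forces the tails.
\end{itemize}

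That global input cannot be avoided, because the tails of the legs do not certify membership in an image. Consider $\Sigma(2,11,19)$. Read from the center, its $q$-leg is $(6,2)$ and its $r$-leg is $(4,2,2,2,2,2)$, all even. Its $r$-leg therefore ends in $4$ followed by five $2$'s, exactly as for a $T$-image. Yet $T^{-1}\binom{11}{19}=\binom11\notin(\tfrac12,\tfrac23)$, and $\tfrac{11}{19}$ lies in the gap between $\tfrac{26}{45}$ and $\tfrac{29}{50}$. It fails to be an even plumbing only because its central weight is $-1$. The same example shows that in Step 1 you must also check that $P,T,Z$ keep the central weight equal to $-2$: unimodularity determines it but does not make it even.

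What is missing is an actual classification argument that couples the two legs through the central vertex. This could be Yamada--Matsumoto's decomposition into particles and $T$/$Z$-joints, or a direct proof that every $q/r$ in a gap yields an odd HJ entry or an odd central weight. You would also need a proof that each admissible tail pattern forces $q/r$ into the corresponding image interval, so that the inverse operator lands back in $(\tfrac12,\tfrac23)$ and your descent on $s$ is legitimate. Without these, Step 3 restates the theorem rather than proving it.
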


Although we merely reformulate the theorem of Yamada and Matsumoto, the operator viewpoint is rather enlightening. For example it makes clear that for all even plumbings, $q/r$ is in the union of (rather short) intervals
\begin{equation}\label{eq:intervals}
\left(\frac12,\frac{6}{11}\right) \cup \left(\frac{15}{26},  \frac{26}{45}\right) \cup  \left(\frac{29}{50},  \frac{18}{31}\right) \cup \Big\{\frac{3}{5}\Big\} \cup \left(\frac{21}{34},\frac{34}{55}\right) \cup \left(\frac{31}{50},\frac{18}{29}\right)\cup \left(\frac{9}{14},\frac{2}{3}\right),
\end{equation}
given by the images of $\left(\frac12,\frac23\right)$ under the generators of $m_{ev}$.

It is interesting to see which vertices of the Stern--Brocot tree correspond to even plumbings. For every such fraction in the sequence $\SB_n$, the operators $P$, and $\overline{P}$ create two new fractions in $\SB_{n+4}$ and the operators $T,\overline{T},Z,\overline{Z}$ create four new fractions in $\SB_{n+6}$. In particular, all even plumbings are in some $\SB_{2\ell+1}$ with $\ell \geq 1$.

\section{Examples}\label{sec:examples} 

It this section, we illustrate the main results of the paper on an explicit family of Brieskorn spheres. Namely, we reprove the following result:
\begin{corollary}\label{example1}
    For $q, n \in \mathbb{N}$ with $q$ odd, $q \geq 3$ and $n \geq 1$, 
    \begin{equation}\label{example2}
        d(\Sigma(2, q, 2q n - 1)) > 0
    \end{equation}
    and
    \begin{equation}\label{example3}
        d(\Sigma(2, q, 2qn + 1) ) = 0.
    \end{equation}
\end{corollary}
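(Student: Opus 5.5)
Both statements reduce to Theorem~\ref{th:int3} (equivalently Proposition~\ref{prop:Maction}): I will locate $\frac{q}{2qn\mp1}$ explicitly in the $\widetilde{\mathfrak{m}}^*$-orbits. The $d=0$ case should come from the orbit of $\frac13$, and the $d>0$ case from the orbit of a point of $(\frac12,\frac23)$. Write $q=2m+1$ with $m\ge 1$. The key observation is that $h_1=\Mat{1}{0}{2}{1}$ acts by $\binom{q}{r}\mapsto\binom{q}{r+2q}$. Hence $h_1^{\,n-1}$ sends $\binom{q}{2q\pm1}$ to $\binom{q}{2qn\pm1}$. Since $h_1$ preserves $d$ by the Lidman--Tweedy surgery argument of \cref{s:Gs}, it suffices to treat $n=1$, i.e.\ the fractions $\frac{q}{2q-1}$ and $\frac{q}{2q+1}$.

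\emph{The case $r=2q+1$.} Here $h_2=\Mat{0}{1}{1}{2}$ sends $\binom{q}{r}$ to $\binom{r}{q+2r}$, and $h_1$ fixes $q$. I will solve backwards. Given $\binom{q}{2q+1}$, apply $h_2^{-1}=\Mat{-2}{1}{1}{0}$ to get $\binom{1}{q}$. Then $\binom{1}{q}=\binom{1}{2m+1}=h_1^{\,m-1}\binom{1}{3}$. Therefore
\[
\binom{q}{2qn+1}=h_1^{\,n-1}h_2h_1^{\,m-1}\binom{1}{3}\in\widetilde{\mathfrak{m}}^*\binom13=\mathcal{V}.
\]
By Theorem~\ref{th:int3}(ii) this gives $d=0$. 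Alternatively, one can chain stable equivalences directly down to $\Sigma(2,1,3)=S^3$, which uses the $q=1$ conventions of \cref{ss:3.3}.

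\emph{The case $r=2q-1$.} For $q=3$ we have $\frac{3}{5}\in(\frac12,\frac23)$, so Theorem~\ref{thm:nonvanishing} applies directly. For general $q$, apply $h_2^{-1}$ to $\binom{q}{2q-1}$, which gives $\binom{-1}{q}$; this is not in the positive range, so $h_2$ is the wrong last letter. Instead I will try $h_4^{-1}=\Mat{3}{-2}{2}{-1}$. It sends $\binom{q}{2q-1}$ to $\binom{2-q}{1}$, also negative for $q>2$. So I expect the correct preimage to use $h_1$ or $h_2$ in a different combination, and I would determine it by locating $\frac{q}{2q-1}$ in the Stern--Brocot tree. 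Its continued fraction is $[0;1,1,q-1]$, so it lies in $(\frac12,\frac23)$ for $q\ge 3$: indeed $\frac{q}{2q-1}>\frac12$, and $\frac{q}{2q-1}<\frac23$ iff $3q<4q-2$ iff $q>2$. This means no preimage is needed: $\frac{q}{2q-1}\in(\frac12,\frac23)$, so Theorem~\ref{thm:nonvanishing} gives $d>0$. Applying $h_1^{\,n-1}$ then places $\frac{q}{2qn-1}$ in the $\widetilde{\mathfrak{m}}^*$-orbit of $(\frac12,\frac23)$, i.e.\ in $\mathcal{M}$, and Theorem~\ref{th:int3}(i) yields $d(\Sigma(2,q,2qn-1))>0$.

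\emph{Main obstacle.} The only real work is the explicit orbit factorization in the $d=0$ case. Beyond that, I need to confirm that $h_2^{-1}$ and $h_1^{-1}$ keep all intermediate pairs coprime, odd and positive, so that each step is a genuine Brieskorn sphere. This is easy here, since every intermediate vector has the form $\binom{1}{2k+1}$ or $\binom{q}{\text{odd}}$.
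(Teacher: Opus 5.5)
Your proof is correct, but it takes a different route from the paper's. The paper works on the Stern--Brocot side. For $r=2qn-1$ it solves for the Stern--Brocot parents of $\frac{q}{2qn-1}$ and exhibits the even interval $[\frac{1}{2n},\frac{2}{4n-1}]$ containing it in its interior. For $r=2qn+1$ it lists the elementary intervals containing $\frac{q}{2qn+1}$ in their interior and checks mod~$2$ that none is of type $E$. It then concludes $\frac{q}{2qn+1}\in\mathcal{V}$ from the dichotomy $\mathcal{M}\sqcup\mathcal{V}$.

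You instead write explicit words in $\widetilde{\mathfrak{m}}^*$: $\binom{q}{2qn-1}=h_1^{\,n-1}\binom{q}{2q-1}$ with $\frac{q}{2q-1}\in(\frac12,\frac23)$, and $\binom{q}{2qn+1}=h_1^{\,n-1}h_2h_1^{\,m-1}\binom{1}{3}$. Both identities check out.

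For the $d>0$ half the two arguments agree in substance. The paper's even interval is exactly $h_1^{\,n-1}\cdot[\frac12,\frac23]$, since $h_1^{\,n-1}\binom12=\binom{1}{2n}$ and $h_1^{\,n-1}\binom23=\binom{2}{4n-1}$. You reach it directly, without the tree computation.

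For the $d=0$ half your factorization buys more. Read as a chain of stable equivalences from \cref{s:Gs} down to $\Sigma(2,1,3)\cong S^3$ (with the $q=1$ convention), it proves $d=0$ using only the Lidman--Tweedy moves $h_1,h_2$. It needs none of the Stern--Brocot combinatorics: neither the identification $\mathcal{V}=\widetilde{\mathfrak{m}}^*\binom13$ nor the partition $\mathcal{M}\sqcup\mathcal{V}$. The paper's route is chosen instead to show how to read off the answer from the tree for an arbitrary fraction.

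The exploratory attempts with $h_2^{-1}$ and $h_4^{-1}$ in your $r=2q-1$ case are unnecessary and can be dropped. So can the worry in your last paragraph, since applying Theorem~\ref{th:int3} or Theorem~\ref{thm:nonvanishing} directly does not require checking intermediate pairs.
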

\noindent While the result given above is by no means new (see for instance~\cite{Tweedy_2013}),
the proof below provides a valuable illustration of the calculus of the Stern--Brocot tree from \cref{sec:sb-combinatorics}. We hope it will provide a model for anyone who may be interested in using our techniques to calculate $d(\Sigma(2, q, r))$ for any particular pair $q, r$ which they may be studying.
\begin{proof}
    First, we verify~\eqref{example2}. By \cref{th:int3} it is enough to show that $\frac{q}{2qn -1}$ is in the interior of an even interval. Suppose $\frac{q}{2qn - 1}$ first appears in $\mathrm{SB}_k$ for some $k$. We want to find $a, b, c, d \in \mathbb{N}$ such that the sequence
    \begin{equation}\label{example4}
    \frac{a}{b} < \frac{q}{2q n -1} < \frac{c}{d}
    \end{equation}
    appears in $\mathrm{SB}_k$. Recall that every rational number between 0 and 1 appears \emph{exactly once} in the Stern--Brocot tree, and that any interval $[a/b, c/d] \subseteq [0, 1]$ with $ad - bc = -1$ is an elementary interval in $\mathrm{SB}_\ell$ for \emph{some} $\ell$. Using the conditions that $a, b, c, d$ from~\eqref{example4} are integers, and:
    \begin{align*}
        ad- bc &= -1, \\
        a + c &= q, \\
        b + d & = 2qn - 1, \\
        a(2qn - 1) - b q &= -1,
    \end{align*}
    we get that
    \[
        \frac{a}{b} = \frac{1}{2n} \quad \text{ and } \quad \frac{c}{d} = \frac{q - 1}{2n (q - 1) - 1}.
    \]
    Repeating this process, we get the following subtree of the Stern--Brocot tree:
    \begin{center}
\begin{tikzpicture}[diagram,
    reset cm, x=8mm, y=10mm,                      
    every label/.append style={inner sep=2pt},
    omitted/.style={thick, dotted}]               

  \node[vertex, label={above left:$\frac{1}{2n}$}]           (a) at (0,0)  {};
  \node[vertex, label={above right:$\frac{2}{2\cdot 2n-1}$}] (b) at (4,-1) {};
  \node[vertex, label={above left:$\frac{3}{2\cdot 3n-1}$}]  (c) at (3,-2) {};
  \node[vertex, label={above left:$\frac{q-1}{2(q-1)n-1}$}]  (d) at (2,-3) {};
  \node[vertex, label={above left:$\frac{q}{2qn-1}$}]        (e) at (1,-4) {};

  \draw (a) -- (b) -- (c);
  \draw (d) -- (e);

      \draw[omitted] (a) -- ++(1.25,0.25);      
  \draw[omitted] (a) -- ++(-0.7,-0.7);    
  \draw[omitted] (b) -- ++(0.7,-0.7);     
  \draw[omitted] (c) -- (d);              
  \draw[omitted] (c) -- ++(0.7,-0.7);
  \draw[omitted] (d) -- ++(0.7,-0.7);
  \draw[omitted] (e) -- ++(-0.7,-0.7);
  \draw[omitted] (e) -- ++(0.7,-0.7);
\end{tikzpicture}
\end{center}
    Notice that $1/2n$ sits in the elementary interval $[\frac{0}{1}, \frac{1}{2n-1}]$, so it is in $\mathrm{SB}_{2n}$ and $[\frac{1}{2n}, \frac{2}{4n - 1}]$ is a (maximal) even interval. Clearly, this interval contains $\frac{q}{2qn - 1}$, so~\eqref{example2} is done.

    The proof of~\eqref{example3} works similarly. Letting $k$ temporarily denote the minimal integer with $\frac{q}{2q n + 1} \in \mathrm{SB}_k$, we start by finding the sequence  
    \[  
        \frac{a}{b} < \frac{q}{2q n +1} < \frac{c}{d}
    \]
    appearing in $\mathrm{SB}_k$ . We use conditions analogous to those above to obtain
    \[
        \frac{a}{b} = \frac{ q - 1}{2 (q - 1) n + 1} \quad \text{ and } \quad \frac{c}{d} = \frac{1}{2n}.
    \]
    Working backwards, we get a subtree of the Stern--Brocot tree of the form:
    \begin{center}
\begin{tikzpicture}[diagram,
    reset cm, x=8mm, y=10mm,                      
    every label/.append style={inner sep=2pt},
    omitted/.style={thick, dotted}]               

  \node[vertex, label={above left:$\frac{1}{2n}$}]            (a) at (0,0)   {};
  \node[vertex, label={above left:$\frac{1}{2n+1}$}]          (b) at (-4,-1) {};
  \node[vertex, label={above right:$\frac{2}{2\cdot 2n+1}$}]  (c) at (-3,-2) {};
  \node[vertex, label={above right:$\frac{q-1}{2(q-1)n+1}$}]  (d) at (-2,-3) {};
  \node[vertex, label={above right:$\frac{q}{2qn+1}$}]        (e) at (-1,-4) {};

  \draw (a) -- (b) -- (c);
  \draw (d) -- (e);

  \draw[omitted] (a) -- ++(1.25,0.25);      
  \draw[omitted] (a) -- ++(0.7,-0.7);     
  \draw[omitted] (b) -- ++(-0.7,-0.7);    
  \draw[omitted] (c) -- ++(-0.7,-0.7);
  \draw[omitted] (c) -- (d);              
  \draw[omitted] (d) -- ++(-0.7,-0.7);
  \draw[omitted] (e) -- ++(-0.7,-0.7);
  \draw[omitted] (e) -- ++(0.7,-0.7);
\end{tikzpicture}
\end{center}
    Any black interval containing $\frac{q}{2q n + 1}$ would have to come from above it in the tree. In fact the elementary intervals containing $\frac{q}{2q n + 1}$ in the interior have to be one of  
    \[
        \left[\frac{\alpha}{2 n \alpha + 1}, \frac{1}{2n }\right] \quad \text{with $1 \leq \alpha < q$, } \quad \text{ or } \quad \left[\frac{0}{1}, \frac{1}{m}\right]  \quad \text{for $1 \leq m \leq 2n$.}
    \]
    But since none of these intervals are of the form $E$ (counted mod 2) from Lemma~\ref{lem:SB1}, none of these intervals are even. Therefore, by Theorem~\ref{th:int3}, we obtain $q / (2qn + 1) \in \mathcal{V}$ for each odd $q \geq 3$ and $n \geq 1$, and~\eqref{example3} follows.
\end{proof}

\bibliographystyle{unsrt}
\bibliography{refs}

\end{document}

%% file: graphics/type-i-ii/interval_subdivision.tex
\begin{tikzpicture}[x=0.9cm, y=1cm,
      dot/.style    = {circle, fill, inner sep=0pt, minimum size=3.6pt},
      hollow/.style = {circle, draw, fill=white, inner sep=0pt,
                       minimum size=4.4pt, line width=0.6pt},
      lab/.style    = {above=3pt, inner sep=1pt},
      subint/.style = {line width=1.8pt}]

    \node[left] at (-1.15,0)    {$\SB_{2\ell}$};
    \node[left] at (-1.15,-1.5) {$\SB_{2\ell+2}$};

    \foreach \s in {0,7.6} {
      \begin{scope}[shift={(\s,0)}]
        \foreach \y in {0,-1.5} {
          \draw (-0.4,\y) -- (4.4,\y);
          \foreach \e in {0.6,0.8,1} {
            \fill (-\e,\y)  circle (0.6pt);
            \fill (4+\e,\y) circle (0.6pt);
          }
        }
        \draw[|->] (2,-0.32) -- (2,-0.72);
      \end{scope}
    }

    \foreach \x/\p/\q in {0/0/1,4/1/1} {
      \node[dot] at (\x,0) {};
      \node[lab] at (\x,0) {$\frac{\p}{\q}$};
    }
    \draw[subint] (2,-1.5) -- (3,-1.5);
    \foreach \x in {0,2,3,4} \node[dot] at (\x,-1.5) {};
    \node[hollow] at (1,-1.5) {};
    \foreach \p/\q [count=\x from 0] in {0/1,1/1,1/0,0/1,1/1}
      \node[lab] at (\x,-1.5) {$\frac{\p}{\q}$};
    \node at (2,-2.05) {Type I};

    \begin{scope}[shift={(7.6,0)}]
      \foreach \x/\p/\q in {0/1/1,4/1/0} {
        \node[dot] at (\x,0) {};
        \node[lab] at (\x,0) {$\frac{\p}{\q}$};
      }
      \draw[subint] (1,-1.5) -- (2,-1.5);
      \foreach \x in {0,1,2,4} \node[dot] at (\x,-1.5) {};
      \node[hollow] at (3,-1.5) {};
      \foreach \p/\q [count=\x from 0] in {1/1,1/0,0/1,1/1,1/0}
        \node[lab] at (\x,-1.5) {$\frac{\p}{\q}$};
      \node at (2,-2.05) {Type II};
    \end{scope}
\end{tikzpicture}

%% file: graphics/type-i-ii/sb_tree_type_i_ii.tex
\begin{tikzpicture}[diagram,
    reset cm, x=5mm, y=5mm,                     
    every label/.append style={inner sep=2pt}]  


  \begin{scope}[local bounding box=rowI]

    \node[vertex,  label={above:$\frac{0}{1}$}] (a1) at (0,-0.5) {};
    \node[circled, label={right:$\frac{1}{1}$}] (b1) at (3,0.5)  {};
    \draw[dashed] (a1) -- (b1);

    \draw[|->] (4.5,0) -- (5.7,0);

    \begin{scope}[shift={(6.2,0.5)}]
      \node[vertex,  label={above:$\frac{0}{1}$}]       (e1) at (0,0)     {};
      \node[circled, label={right:$\frac{1}{1}$}]       (t1) at (3,1)     {};
      \node[vertex,  label={above:$\frac{1}{0}$}]       (m1) at (1.5,-1)  {};
      \node[hollow,  label={above left:$\frac{1}{1}$}]  (h1) at (0.75,-2) {};
      \node[vertex,  label={above right:$\frac{0}{1}$}] (r1) at (2.25,-2) {};
      \draw[dashed] (t1) -- (e1);
      \draw (e1) -- (m1) -- (h1);
      \draw[bold edge] (m1) -- (r1);
    \end{scope}

    \begin{scope}[shift={(12.1,0)}]
      \node[vertex,  label={left:$\frac{0}{1}$}]  (a2) at (0,0.5)  {};
      \node[circled, label={above:$\frac{1}{1}$}] (b2) at (3,-0.5) {};
      \draw[dashed] (a2) -- (b2);

      \draw[|->] (4.5,0) -- (5.7,0);

      \begin{scope}[shift={(6.2,0.5)}]
        \node[vertex,  label={left:$\frac{0}{1}$}]        (t2) at (0,1)     {};
        \node[circled, label={above:$\frac{1}{1}$}]       (e2) at (3,0)     {};
        \node[vertex,  label={above:$\frac{1}{0}$}]       (m2) at (1.5,-1)  {};
        \node[hollow,  label={above left:$\frac{1}{1}$}]  (h2) at (0.75,-2) {};
        \node[vertex,  label={above right:$\frac{0}{1}$}] (r2) at (2.25,-2) {};
        \draw[dashed] (t2) -- (e2);
        \draw (e2) -- (m2) -- (h2);
        \draw[bold edge] (m2) -- (r2);
      \end{scope}
    \end{scope}
  \end{scope}

  \begin{scope}[shift={(0,-4.5)}, local bounding box=rowII]

    \node[circled, label={above:$\frac{1}{1}$}] (a3) at (0,-0.5) {};
    \node[vertex,  label={right:$\frac{1}{0}$}] (b3) at (3,0.5)  {};
    \draw[dashed] (a3) -- (b3);

    \draw[|->] (4.5,0) -- (5.7,0);

    \begin{scope}[shift={(6.2,0.5)}]
      \node[circled, label={above:$\frac{1}{1}$}]       (e3) at (0,0)     {};
      \node[vertex,  label={right:$\frac{1}{0}$}]       (t3) at (3,1)     {};
      \node[vertex,  label={above:$\frac{0}{1}$}]       (m3) at (1.5,-1)  {};
      \node[vertex,  label={above left:$\frac{1}{0}$}]  (l3) at (0.75,-2) {};
      \node[hollow,  label={above right:$\frac{1}{1}$}] (h3) at (2.25,-2) {};
      \draw[dashed] (t3) -- (e3);
      \draw (e3) -- (m3) -- (h3);
      \draw[bold edge] (m3) -- (l3);
    \end{scope}

    \begin{scope}[shift={(12.1,0)}]
      \node[circled, label={left:$\frac{1}{1}$}]  (a4) at (0,0.5)  {};
      \node[vertex,  label={above:$\frac{1}{0}$}] (b4) at (3,-0.5) {};
      \draw[dashed] (a4) -- (b4);

      \draw[|->] (4.5,0) -- (5.7,0);

      \begin{scope}[shift={(6.2,0.5)}]
        \node[circled, label={left:$\frac{1}{1}$}]        (t4) at (0,1)     {};
        \node[vertex,  label={above:$\frac{1}{0}$}]       (e4) at (3,0)     {};
        \node[vertex,  label={above:$\frac{0}{1}$}]       (m4) at (1.5,-1)  {};
        \node[vertex,  label={above left:$\frac{1}{0}$}]  (l4) at (0.75,-2) {};
        \node[hollow,  label={above right:$\frac{1}{1}$}] (h4) at (2.25,-2) {};
        \draw[dashed] (t4) -- (e4);
        \draw (e4) -- (m4) -- (h4);
        \draw[bold edge] (m4) -- (l4);
      \end{scope}
    \end{scope}
  \end{scope}

  \node[anchor=east] at ([xshift=-1em]rowI.west)  {Type I};
  \node[anchor=east] at ([xshift=-1em]rowII.west) {Type II};

  \path (rowI.south) -- (rowII.north) coordinate[pos=0.5] (hmid);
  \draw[thin, black!40] (current bounding box.west |- hmid)
                     -- (current bounding box.east |- hmid);
  \path (t1.center) -- (a2.center) coordinate[pos=0.5] (vmid);
  \draw[thin, black!40] (vmid |- current bounding box.north)
                     -- (vmid |- current bounding box.south);

\end{tikzpicture}

%% file: graphics/type-i-ii/sb_characterization_two.tex
\scalebox{0.9}{
\begin{tikzpicture}[
    x=0.97\linewidth,                       
    font=\small,
    seg/.style    = {line width=0.8pt},
    hl/.style     = {line width=2.4pt},     
    grayed/.style = {color=gray},       
    dot/.style    = {circle, fill, inner sep=0pt, minimum size=4.5pt},
    hollow/.style = {circle, draw, fill=white, line width=0.8pt,
                     inner sep=0pt, minimum size=6pt},
    frac/.style   = {above=5pt, inner sep=0pt},
    lab/.style    = {below=5pt, inner sep=0pt},
  ]

  \draw[seg] (0,0) -- (1,0);
  \node[lab] at (1/2,0) {$O$};
  \foreach \k/\p/\q in {0/0/1, 1/1/1} {
    \node[dot]  at (\k,0) {};
    \node[frac] at (\k,0) {$\dfrac{\p}{\q}$};
  }

  \begin{scope}[yshift=-2cm]
    \draw[seg] (0,0) -- (1,0);
    \foreach \k in {3}                      
      \draw[hl] ({(\k-1)/4},0) -- (\k/4,0);
    \foreach \k/\w in {1/g_1, 2/g_2u, 3/g_3, 4/g_4}
    \node[lab] at ({(\k-0.5)/4},0) {$O\w$};
    \foreach \k/\p/\q in {0/0/1, 1/1/3, 2/1/2, 3/2/3, 4/1/1} {
      \node[dot]  at (\k/4,0) {};
      \node[frac] at (\k/4,0) {$\dfrac{\p}{\q}$};
    }
    \foreach \k in {1}                      
      \node[hollow] at (\k/4,0) {};
  \end{scope}

  \begin{scope}[yshift=-4cm]
    \draw[seg]         (0,0)     -- (8/16,0);
    \draw[seg, grayed] (8/16,0)  -- (12/16,0);   
    \draw[seg]         (12/16,0) -- (1,0);
    \foreach \k in {3,6,15}                 
      \draw[hl] ({(\k-1)/16},0) -- (\k/16,0);
    \foreach \k/\w in { 1/g_1g_1,   2/g_1g_2u,   3/g_1g_3,   4/g_1g_4,
                        5/g_2g_4u,  6/g_2g_3u,  7/g_2g_2,   8/g_2g_1u,
                        9/g_3g_1,  10/g_3g_2u,  11/g_3g_3,  12/g_3g_4,
                       13/g_4g_1,  14/g_4g_2u, 15/g_4g_3,  16/g_4g_4}
        \node[lab] at ({(\k-0.5)/16}, {-0.45*mod(\k+1,2)}) {$O\w$};
        \foreach \k/\p/\q in { 0/0/1,  1/1/5,  2/1/4,  3/2/7,  4/1/3,  5/3/8,
                           6/2/5,  7/3/7,  8/1/2, 12/2/3, 13/5/7, 14/3/4,
                          15/4/5, 16/1/1}
      \node[frac] at (\k/16,0) {$\dfrac{\p}{\q}$};
    \foreach \k/\p/\q in {9/4/7, 10/3/5, 11/5/8}   
      \node[frac, grayed] at (\k/16,0) {$\dfrac{\p}{\q}$};
    \foreach \k in {0,1,2,3,4,5,6,7,8,12,13,14,15,16}
      \node[dot] at (\k/16,0) {};
    \foreach \k in {9,10,11}                
      \node[dot, grayed] at (\k/16,0) {};
    \foreach \k in {1,4,7,13}               
      \node[hollow] at (\k/16,0) {};
  \end{scope}

\end{tikzpicture}
}

%% file: graphics/plumbing/brieskorn_plumbing.tex
\begin{tikzpicture}[
    x=1cm, y=0.7cm,
    vertex/.style={circle, fill, inner sep=0pt, minimum size=4pt},
    every label/.style={font=\small, inner sep=2pt}
  ]
  \node[vertex, label={above:$b$}] (O) at (0,0) {};

  \node[vertex, label={above:$-a_t$}] (At) at (-1,0) {};
  \node[vertex, label={above:$-a_2$}] (A2) at (-2,0) {};
  \node[vertex, label={above:$-a_1$}] (A1) at (-3,0) {};

  \node[vertex, label={above:$-c_{t''}$}] (Ct) at (1,0) {};
  \node[vertex, label={above:$-c_2$}]     (C2) at (2,0) {};
  \node[vertex, label={above:$-c_1$}]     (C1) at (3,0) {};

  \node[vertex, label={right:$-b_{t'}$}] (Bt) at (0,-1) {};
  \node[vertex, label={right:$-b_2$}]    (B2) at (0,-2) {};
  \node[vertex, label={right:$-b_1$}]    (B1) at (0,-3) {};

  \draw (O) -- (At)  (O) -- (Bt)  (O) -- (Ct)
        (A2) -- (A1)  (B2) -- (B1)  (C2) -- (C1);
  \draw[thick, dotted] (At) -- (A2)  (Bt) -- (B2)  (Ct) -- (C2);
\end{tikzpicture}

%% file: graphics/plumbing/example_three.tex

\tikzset{
  vtx/.style = {circle, fill, inner sep=0pt, outer sep=0pt, minimum size=5pt},
  pv/.style 2 args = {
    vtx,
    label={[text depth=.5ex]above:$#1$},   
    label=below:$#2$
  },
  restbox/.style = {
    rectangle, draw=black!60, dashed, thin, fill=black!4,
    minimum width=2.2cm, minimum height=1.4cm
  },
  mapstoarrow/.style = {
    line width=0.6pt, {Bar[width=7pt]}-{To[length=3.5pt, width=7pt]}
  }
}

\begin{tikzpicture}[thick]

  \node[restbox]     (G1) at (0,0)     {$\Gamma - v$};
  \node[vtx]         (w1) at (G1.east) {};   
  \node[pv={e_v}{v}] (v1) at (2.3,0)   {};
  \draw (w1) -- (v1);
  \node at (0.7,-1.3) {$\Gamma$};

  \draw[mapstoarrow] (3.2,0) -- (4.4,0);

  \begin{scope}[xshift=6.2cm]
    \node[restbox]                    (G2)   at (0,0)     {$\Gamma - v$};
    \node[vtx]                        (w2)   at (G2.east) {};
    \node[pv={e_v}{v}]                (v2)   at (2.3,0)   {};
    \node[pv={-\Delta-1}{v_{\mathrm{new}}}] (vnew) at (3.5,0)   {};
    \draw (w2) -- (v2) -- (vnew);
    \node at (1.4,-1.3) {$\Gamma'$};
  \end{scope}

\end{tikzpicture}

%% file: graphics/plumbing/plumbing_before.tex
    \begin{tikzpicture}[
    v/.style={circle, fill, draw, inner sep=1.5pt},
    dots/.style={fill=white, inner sep=1pt, sloped},
  ]
  \node[v] (c)  at (0,0)  {};   
  \node[v] (p1) at (-1,0) {};   
  \node[v] (q1) at (0,-1) {};   
  \node[v] (n)  at (0,-2) {};   
  \node[v] (r1) at (1,0)  {};   
  \node[v] (r2) at (2,0)  {};
  \node[v] (i)  at (3,0)  {};   
  \node[v] (j)  at (4,0)  {};   

  \node[left]  at (n) {$E_n$};
  \node[below] at (i) {$E_i$};
  \node[below] at (j) {$E_j$};
  \node[above] at (i) {$-2$};
  \node[above] at (j) {$-2$};

  \draw (c)  -- node[dots, pos=0.6] {$\dots$} (p1);
  \draw (c)  -- node[dots, pos=0.6] {$\dots$} (q1);
  \draw (q1) -- (n);
  \draw (c)  -- (r1);
  \draw (r1) -- node[dots] {$\dots$} (r2);
  \draw (r2) -- (i) -- (j);
\end{tikzpicture}



%% file: graphics/plumbing/splice_max.tex
	\begin{tikzpicture}[
    v/.style={circle, fill, draw, inner sep=1.5pt},
    wt/.style={inner sep=1.5pt},                    
    dots/.style={fill=white, inner sep=1pt, sloped},
  ]
  \node[v] (c)  at (0,0)  {};   
  \node[v] (p1) at (-1,0) {};   
  \node[v] (q1) at (0,-1) {};   
  \node[v] (n)  at (0,-2) {};   
  \node[v] (r1) at (1,0)  {};   
  \node[v] (r2) at (2,0)  {};
  \node[v] (i)  at (3,0)  {};   
  \node[v] (j)  at (4,0)  {};   

  \node[left]  at (n) {$E_n$};
  \node[below] at (i) {$E_i$};
  \node[below] at (j) {$E_j$};

  \draw (c)  -- node[wt, pos=0.25, above] {$p$} node[dots, pos=0.6] {$\dots$} (p1);
  \draw (c)  -- node[wt, pos=0.25, right] {$q$} node[dots, pos=0.6] {$\dots$} (q1);
  \draw (n)  -- node[wt, pos=0.25, right] {$x$} (q1);
  \draw (c)  -- node[wt, pos=0.25, above] {$r$} (r1);
  \draw (r1) -- node[dots] {$\dots$} (r2);
  \draw (r2) -- node[wt, pos=0.75, above] {$y$} (i);
  \draw (i)  -- node[wt, pos=0.25, above] {$2$} node[wt, pos=0.75, above] {$z$} (j);
\end{tikzpicture}



%% file: graphics/plumbing/plumbing_after.tex
\begin{tikzpicture}[
    v/.style={circle, fill, draw, inner sep=1.5pt},
    dots/.style={fill=white, inner sep=1pt, sloped},
  ]
  \node[v] (c)  at (0,0)  {};   
  \node[v] (p1) at (-1,0) {};   
  \node[v] (q1) at (1,0)  {};
  \node[v] (n)  at (2,0)  {};   
  \node[v] (f)  at (3,0)  {};   
  \node[v] (j)  at (4,0)  {};   
  \node[v] (r1) at (0,-1) {};
  \node[v] (r2) at (0,-2) {};

  \node[above] at (n) {$E_n$};
  \node[above] at (f) {$F$};
  \node[above] at (j) {$E_j$};
  \node[below] at (f) {$-2$};
  \node[below] at (j) {$-2$};

  \draw (c)  -- node[dots, pos=0.6] {$\dots$} (p1);
  \draw (c)  -- node[dots, pos=0.6] {$\dots$} (q1);
  \draw (q1) -- (n) -- (f) -- (j);
  \draw (c)  -- (r1);
  \draw (r1) -- node[dots] {$\dots$} (r2);
\end{tikzpicture}
